\numberwithin{lemma}{section}
\numberwithin{theorem}{section}
\numberwithin{corollary}{section}
\numberwithin{definition}{section}
\numberwithin{property}{section}
\numberwithin{remark}{section}
\numberwithin{example}{section}
\numberwithin{figure}{section}
\begin{document}

\title{Smoothing fast iterative hard thresholding algorithm for $\ell_0$ regularized nonsmooth convex regression problem}

\titlerunning{Fast algorithm for $\ell_0$ penalized regression problem}        

\author{Fan Wu \and
	 Wei Bian$^*$ \and
	 Xiaoping Xue 
}


\institute{
School of Mathematics, Harbin Institute of Technology, Harbin, China (Fan Wu) \\
School of Mathematics, Harbin Institute of Technology, Harbin, China; Institute of Advanced Study in Mathematics, Harbin Institute of Technology, Harbin, China (Wei Bian, Xiaoping Xue) \\
             \email{wufanmath@163.com (Fan Wu); bianweilvse520@163.com (Wei Bian); xiaopingxue@hit.edu.cn (Xiaoping Xue)}   \\
           $^*$ Corresponding author
}

\date{Received: date / Accepted: date}

\maketitle

\begin{abstract}\label{abs}
 We investigate a class of constrained sparse regression problem with cardinality penalty, where the feasible set is defined by box constraint, and the loss function is convex, but not necessarily smooth. First, we put forward a smoothing fast iterative hard thresholding (SFIHT) algorithm for solving such optimization problems, which combines smoothing approximations, extrapolation techniques and iterative hard thresholding methods. The extrapolation coefficients can be chosen to satisfy $\sup_k \beta_k=1$ in the proposed algorithm. We discuss the convergence behavior of the algorithm with different extrapolation coefficients, and give sufficient conditions to ensure that any accumulation point of the iterates is a local minimizer of the original cardinality penalized problem. In particular, for a class of fixed extrapolation coefficients, we discuss several different update rules of the smoothing parameter and obtain the convergence rate of $O(\ln k/k)$ on the loss and objective function values. Second, we consider the case in which the loss function is Lipschitz continuously differentiable, and develop a fast iterative hard thresholding (FIHT) algorithm to solve it. We prove that the iterates of FIHT converge to a local minimizer of the problem that satisfies a desirable lower bound property. Moreover, we show that the convergence rate of loss and objective function values are $o(k^{-2})$. Finally, some numerical examples are presented to illustrate the theoretical results.
\keywords{
cardinality penalty \and smoothing method \and accelerated algorithm \and extrapolation \and convergence rate \and local minimizer }
\end{abstract}

\section{Introduction}
In this paper, we consider the following minimization problem:
\begin{eqnarray}\label{prob1}
\begin{array}{ll}
\mathrm{min}&F(x):= f(x)+\lambda \|x\|_0 \\
\mathrm{s.t.}& x\in \mathcal{X}:=\{x\in \mathbb{R}^n : l\leq x \leq u\},
\end{array}
\end{eqnarray}
for some $\lambda>0$, $l\in {\overline{\mathbb{R}}^{n}_-}:=\{x\in \mathbb{R}^{n}: -\infty\leq x_i \leq 0, 1\leq i \leq n\}$, $u\in {\overline{\mathbb{R}}^{n}_+}:=\{x\in \mathbb{R}^{n}: 0\leq x_i \leq +\infty, 1\leq i \leq n\}$ with $l<u$. In \eqref{prob1}, we call that $f:{{\mathbb{R}}^{n}} \to {\mathbb{R}}$ is the loss function to characterize the data fitting, and $\|x\|_0$ is the penalty function to control the sparsity of solutions. Penalty parameter $\lambda$ is to coordinate the trade-off between the data fitting and sparsity. Throughout this paper, we assume that $f$ in \eqref{prob1} is convex but not necessarily smooth, and we focus on the case that $f$ is nonsmooth.

Such nonsmooth convex regression problems with cardinality penalty arise from many important applications including compressed sensing \cite{Candes2006Robust,Donoho2006Compressed}, variable selection \cite{Liu2007variable}, signal and image processing \cite{Soubies2015a,Bruckstein2009from}, pattern recognition \cite{Blumensath2006sparse} and regression \cite{Tibshirani1996regression}, etc. The purpose of these problems is to find the sparse solutions, most of whose elements are zeros. Owing to the existence of $\ell_0$ function, optimization problem \eqref{prob1} is typical NP-hard in general. When $f$ in \eqref{prob1} is a smooth convex function, a variety of first-order algorithms have been proposed. One important application is the least squares problem, i.e., $f(x)=\|Ax-b\|^2$, with sensing matrix $A\in \mathbb{R}^{m\times n}$ and observation vector $b\in \mathbb{R}^m$. Greedy methods have been proposed to seek the solutions of the $\ell_0$ penalized least squares problems in the early stage, such as matching pursuit (MP) \cite{Mallat1993Matching}, orthogonal matching pursuit (OMP) \cite{Pati2002Orthogonal}, subspace pursuit (SP) \cite{Dai2009subspace}, and so on. With the development of compressed sensing, Donoho \cite{Donoho2006Compressed} and Cand$\mathrm{\grave{e}}$s, Romberg, Tao \cite{Candes2006Robust} confirmed the equivalence of the $\ell_0$ problem and $\ell_1$ problem when $A$ satisfies some proper conditions. Continuous convex relaxation methods is to replace the $\ell_0$ function by a continuous convex function. Even though $\ell_1$ penalty can be used to find the sparse solutions effectively, recent theoretical analysis shows that it often leads to an over-penalized problem or a biased estimator. Therefore, there occur various continuous nonconvex relaxation functions for $\ell_0$ function, such as smoothly clipped absolute deviation (SCAD) function \cite{Fan2001Variable}, hard thresholding function \cite{Zheng2014High}, capped-$\ell_1$ function \cite{Peleg2008A}, transformed $\ell_1$ function \cite{Nikolova2000Local}, etc. It is proved that these continuous nonconvex penalty functions can not only bring accurate sparse solutions, but also reduce the deviation of nonzero elements with respect to the true estimator. Nevertheless, in \cite{Bian2017Optimality}, it has been proved that finding global minimizers of these nonconvex relaxation problems are also NP-hard in general.

Blumensath and Davies \cite{Blumensath2008Iterative} proposed an iterative hard thresholding (IHT) algorithm for solving the unconstrained and constrained $\ell_0$ penalized problems, respectively. And they proved that the iterates converge to a local minimizer when $\|A\|_2<1$. In \cite{Blumensath2009Iterative}, they also verified that the IHT algorithm can obtain an approximated solution if $A$ has restricted isometry property. Moreover, Lu and Zhang \cite{Lu2012Sparse} presented a penalty decomposition method for general $\ell_0$-penalized and $\ell_0$-constrained minimization problem, and established that any accumulation point of the iterates satisfies the first-order optimality conditions. Lu \cite{Lu2014Iterative} also studied an IHT algorithm and its variant for solving (1.1) when $f$ is Lipschitz continuously differentiable.

The proximal forward-backward splitting algorithm \cite{Chambolle1998Nonlinear,Combettes2005signal,Daubechies1988An,Hale2008Fixed} is a classical first-order splitting method, which is also called the proximal gradient algorithm. When this method is used to solve the $\ell_1$ penalized convex regression problem, it is often called the iterative shrinkage thresholding algorithm (ISTA). As we know, for the case that the loss function is Lipschitz continuously differentiable and convex, the convergence rate of the objective function values generated by ISTA is $O(k^{-1})$, where $k$ is the iteration counter. Based on ISTA and Nesterov's acceleration scheme, Beck and Teboulle \cite{Beck2009A} proposed a fast iterative shrinkage thresholding algorithm (FISTA), which not only keeps the simplicity and computation cheapness of ISTA, but also improves the convergence rate of the objective function values to $O(k^{-2})$. Moreover, Nesterov \cite{Nesterov2013Gradient} independently proposed an accelerated gradient algorithm for the same problem with the same convergence rate as FISTA. It's noteworthy that Su, Boyd and Cand$\mathrm{\grave{e}}$s \cite{Su2016a} studied the relationships between a second-order ordinary differential equation and the Nesterov's accelerated gradient method. Inspired by the analysis in \cite{Su2016a}, Attouch and Peypouquet \cite{Attouch2016the} investigated a proximal gradient method with extrapolation coefficients $\beta_k=\frac{k-1}{k+\alpha-1}$ for minimizing the sum of a Lipschitz continuously differentiable convex function and a proper closed convex function. In \cite{Attouch2016the}, it is proved that the convergence rate of the objective function values is $o(k^{-2})$, and the iterates $\{x^k\}$ converge to a minimizer of this problem as $\alpha>3$.
Recently, Doikov and Nesterov \cite{Doikov2020contracting} presented a new accelerated algorithm for solving such problem, in which
they used the high-order tensor methods to solve the inner subproblems and gave a complexity estimate under some assumptions. For the case with Lipschitz continuously differentiable but nonconvex loss function, Wen, Chen and Pong \cite{Wen2017Linear} proved that the iterates and objective function values generated by the proximal gradient algorithm with extrapolation are R-linearly convergent under the error bound condition. Later, Adly and Attouch \cite{Adly2020finite} proposed an inertial proximal gradient algorithm with Hessian damping and dry friction to obtain the finite convergence under some certain conditions.

A class of direct methods for solving nonsmooth convex minimization is the subgradient methods. For an $\epsilon >0$, if $x^{\epsilon}$ satisfies $f(x^{\epsilon})-\min f \leq \epsilon$, then $x^{\epsilon}$ is called an $\epsilon$-approximation solution of $\min f$. It has been reported that the complexity of most subgradient methods for finding an $\epsilon$-approximation solution is of the order $O({\epsilon}^{-2})$. For a class of nonsmooth functions with $\max$ operator, Nesterov \cite{Nesterov2005smooth} gave a smooth convex function with Lipschitz continuous gradient of factor ${\epsilon}^{-1}$ to approximate it, where $\epsilon$ is a given and fixed positive paramter. Then, Nesterov \cite{Nesterov2005smooth} proved that the complexity for finding an $\epsilon$-approximation solution of this nonsmooth problem can be improved to $O({\epsilon}^{-1})$
when the accelerated gradient method is applied to solve the approximate smooth function.
Later, the authors in \cite{Hoda2010smoothing} applied the Nesterov's smoothing technique to the Nash equilibria problem and gave a first-order method with the same complexity as \cite{Nesterov2005smooth}. Notably, Chen \cite{Chen2012smoothing} presented a smoothing gradient method for solving the constrained nonsmooth nonconvex minimization problem and demonstrated how to update the smoothing parameter so that the algorithm converges to a stationary point of the problem. Recently, Bian and Chen \cite{Bian2020a} utilized an exact continuous relaxation problem to solve optimization problem \eqref{prob1} and presented a smoothing proximal gradient algorithm, whose iterates are globally convergent to a local minimizer of problem \eqref{prob1} and convergence rate on the objective function values is $o(k^{-\tau})$ with any $\tau\in(0,1/2)$.
In \cite{Attouch2020Newton}, the authors used the proximal regularized inertial Newton algorithm to solve the nonsmooth convex optimization problem, and proved that the convergence rate of the Moreau envelope values of the objective function is $o(k^{-2})$ when the index satisfies an updating rule.

Up to now, very few studies investigated accelerated algorithm for solving problem \eqref{prob1} in any systematic way. Inspired by the good performance of the accelerated algorithm with extrapolation and the smoothing method, we present a smoothing fast iterative hard thresholding (SFIHT) algorithm for solving problem \eqref{prob1}. It is worth emphasizing that the SFIHT algorithm is used to minimize the sum of a nonsmooth convex function and a discontinuous nonconvex function. The strategy of acceleration is to adopt extrapolation on the iterates. As we know, the larger the range of the extrapolation coefficients, the better. It is worth noting that the extrapolation coefficients in our algorithm can satisfy $\sup_k \beta_k =1$. A key technique in the SFIHT algorithm is that we split the range of the extrapolation coefficients into three cases, which are divided in line with the relationships among the $\ell_0$ norms of the newest three adjacent iterates. Besides, though the subproblem in SFIHT algorithm is a nonconvex minimization problem, it has a closed-form solution and can be calculated exactly due to the special structure of $\ell_0$ norm.
So, the proposed SFIHT algorithm is well-defined. We show that the $\ell_0$ norms of the iterates will not change after a finite number of iterations. Then, we discuss the convergence behavior of the SFIHT algorithm with different extrapolation coefficients. Also, we study the case that the loss function in problem \eqref{prob1} is smooth, and show a derived algorithm of SFIHT algorithm (called FIHT algorithm) for solving it. We prove that the iterates of FIHT algorithm converge to a local minimizer of this problem with an important lower bound property. Moreover, we also substantiate that the convergence rate of FIHT algorithm for the corresponding objective and loss function values is $o(k^{-2})$.

\paragraph{\textbf{Contents}} The rest of this paper is organized as follows. In Section \ref{sec:2}, we first review some preliminary results on smoothing method, then we present the SFIHT algorithm for solving nonsmooth nonconvex problem \eqref{prob1}. Next, we analyze the convergence properties of the proposed algorithm with different extrapolation coefficients for solving \eqref{prob1}. In Section \ref{sec:3}, we focus on solving \eqref{prob1} with a smooth convex loss function. When the extrapolation coefficients in SFIHT algorithm are appropriately fixed, we
give a better convergence behaviour of the proposed algorithm for solving this kind of problems.
In Section \ref{sec:4}, we apply the proposed algorithms to some practical instances, and show the value of acceleration by extrapolation in solving problem \eqref{prob1}.
\paragraph{\textbf{Notations}} Throughout this paper, we denote $\mathbb{N}:=\{1,2,\cdots\}$. Let $\mathbb{R}^n$ be the Euclidean space with inner product $\langle\cdot, \cdot\rangle$ and corresponding Euclidean norm $\|\cdot\|$. For vectors $x, y\in \mathbb{R}^n$, $x\geq y$ means that $x_i \geq y_i$, $i=1,2,\cdots,n$. The $\ell_1$ norm of vector $x\in\mathbb{R}^n$ is denoted by $\|x\|_1$, and let $I(x):=\{i:x_i=0\}$. For a matrix $A\in \mathbb{R}^{m\times n}$, we use $A^{\mathrm{T}}$, $\lambda_{\max}(A)$ and $\|A\|=\sqrt{\lambda_{\max}(A^{\mathrm{T}}A)}$ to denote its transpose, largest eigenvalue and spectral norm, respectively. Given a nonempty closed convex set $\Omega\subseteq\mathbb{R}^n$ and a vector $x\in \mathbb{R}^n$, $P_\Omega(x):=\arg\min\{\|x-z\|: z\in\Omega\}$, $N_\Omega(x)$ denotes the normal cone of $\Omega$ at $x$ and $\Omega_J:=\{x\in \Omega:x_j=0, j\in J\}$ for a given index set $J\subseteq\{1,2,\cdots,n\}$.

\section{Numerical algorithm and its convergence analysis}
\label{sec:2}
In this section, we focus on the case that $f$ is a nonsmooth convex function. In what follows, we assume that $f$ is level bounded on $\mathcal{X}$, i.e., set $\{x\in \mathcal{X}: f(x)\leq r\}$ is bounded for any $r\in \mathbb{R}$, which holds naturally if $\mathcal{X}$ is bounded. Note that function $F$ in \eqref{prob1} is level bounded on $\mathcal{X}$ if and only if $f$ is level bounded on $\mathcal{X}$.
\subsection{Smoothing method and basic properties}
To overcome the nondifferentiability of loss function $f$ in \eqref{prob1}, we use a sequence of continuous differentiable functions to approximate $f$.
\begin{definition}\cite{Bian2020a}\label{def:def1}
	We call $\tilde{f} : \mathbb{R}^n \times (0, \bar{\mu}] \to \mathbb{R}$ with $\bar{\mu}>0$ a smoothing function of the convex function $f$ on $\mathcal{X}$, if $\tilde{f}(x, \mu)$ satisfies the following conditions:
	\begin{enumerate}[{\rm (i)}]
		\item for any fixed $\mu > 0$, $\tilde{f}(\cdot, \mu)$ is continuously differentiable in $\mathbb{R}^n$;
		
		\item $\lim_{z\to x, \mu \downarrow 0}\tilde{f}(z,\mu)=f(x), \quad \forall x\in \mathcal{X}$;
		
		\item $\tilde{f}(\cdot,\mu)$ is convex on $\mathcal{X}$ for any fixed $\mu>0$;
		
		\item $\left\{\lim_{z\to x, \mu \downarrow 0} \nabla_z \tilde{f}(z,\mu)\right\}\subseteq \partial f(x), \quad \forall x\in \mathcal{X}$;
		
		\item there exists a positive constant $\kappa$ such that
		\begin{equation*}
		|\tilde{f}(x,\mu_2)-\tilde{f}(x,\mu_1)|\leq \kappa|\mu_1-\mu_2|, \quad \forall x\in \mathcal{X},\ \mu_1, \mu_2 \in (0, \bar{\mu}];
		\end{equation*}
		
		\item there exists a constant $L_{\tilde{f}}>0$ such that for any $\mu \in(0, \bar{\mu}]$, $\nabla_x \tilde{f}(\cdot,\mu)$ is Lipschitz continuous on $\mathcal{X}$ with Lipschitz constant $L_{\tilde{f}}\mu^{-1}$.
	\end{enumerate}
\end{definition}

For the convenience of description, we provide $\tilde{f}$ a smoothing function of $f$ with the definition in Definition \ref{def:def1} in the following analysis and denote $\nabla \tilde{f}(x, \mu)$ the gradient of $\tilde{f}(x, \mu)$ with respect to $x$. By virtue of Definition \ref{def:def1}-(v), we see that
\begin{equation}\label{fka}
|\tilde{f}(x,\mu)-f(x)|\leq \kappa \mu, \quad \forall x\in \mathcal{X},\ 0< \mu \leq \bar{\mu}.
\end{equation}

Smooth approximations for nonsmooth optimization problems have been studied for decades. The fundamental of smoothing method we use in this paper is as follows. We first approximate loss function $f$ by a smooth function with fixed smoothing parameter $\mu$. Then, one find an approximate solution of the following problem
\begin{equation}\label{smf}
\min_{x\in \mathcal{X}}~\tilde{F}(x,\mu):=\tilde{f}(x,\mu)+\lambda \|x\|_0.
\end{equation}
Next, by updating the smoothing parameter $\mu$, we can find a local minimizer of problem \eqref{prob1}.

According to the definition of $\tilde{f}$, for any fixed $\mu \in (0,\bar{\mu}]$, $L \geq L_{\tilde{f}}$, it holds that
\begin{equation}\label{tfl}
\tilde{f}(x, \mu)-\tilde{f}(y, \mu)\leq \langle\nabla \tilde{f}(y, \mu), x-y \rangle +\frac{L}{2\mu}\|x-y\|^2,\quad \forall x, y \in \mathcal{X}.
\end{equation}
Using the convexity of function $f$, we can prove that $x^*\in \mathcal{X}$ is a local minimizer of problem \eqref{prob1} if and only if $x^*$ satisfies
$
0\in [\partial f(x^*)]_i+[N_\mathcal{X}(x^*)]_i, \forall i \notin I(x^*),
$
which is equivalent to
\begin{equation}\label{cocon}
x^* \in \arg\min \{f(x) : x\in \mathcal{X}_{I(x^*)}\}.
\end{equation}
From \eqref{cocon}, we can easily find that any local minimizer of problem \eqref{prob1} has the oracle property \cite{Fan2001Variable}.

\subsection{Smoothing fast iterative hard thresholding (SFIHT) algorithm}

In this subsection, we combine the smoothing method, extrapolation technique and iterative hard thresholding algorithm to present a fast scheme for solving problem \eqref{prob1}. We name it smoothing fast iterative hard thresholding algorithm and denote it by SFIHT algorithm for short.

In order to find an approximate solution of problem \eqref{smf} with a fixed $\mu>0$, we introduce an approximation of $\tilde{F}(x, \mu)$ around the given point $y$ as follows
\begin{equation}
Q(x, y, \mu):=\tilde{f}(y, \mu)+\langle\nabla \tilde{f}(y, \mu), x-y\rangle +\frac{L}{2\mu} \|x-y\|^2+\lambda \|x\|_0
\end{equation}
with a constant $L>L_{\tilde{f}}$. Further, we solve the following optimization problem
\begin{equation}\label{sub}
\min_{x\in \mathcal{X}}~Q(x, y, \mu)
\end{equation}
to find an approximate solution of problem \eqref{smf}.
Although $\ell_0$ function is nonsmooth and nonconvex, the objective function and the constraint set in \eqref{sub} are both separable with respect to all elements of $x$. Using this fact, it has been proved in \cite{Lu2014Iterative} that optimization problem \eqref{sub} has the closed-form solution denoted by $\bar{x}$ and expressed by, for $i=1,2,\ldots,n,$
\begin{equation}\label{solu}
\bar{x}_i=\left\{
\begin{aligned}
&\left[P_\mathcal{X}\left(S_L(y,\mu)\right)\right]_i& &\mbox{if}~ \left[S_L(y,\mu)\right]_i^2 - \left[q(y,\mu)\right]_i^2 > \frac{2\lambda\mu}{L},& \\
&0& &\mbox{if}~ \left[S_L(y,\mu)\right]_i^2 - \left[q(y,\mu)\right]_i^2 < \frac{2\lambda\mu}{L},& \\
&\left[P_\mathcal{X}\left(S_L(y,\mu)\right)\right]_i \mbox{or}\ 0&   &\mbox{otherwise,}&
\end{aligned}\right.
\end{equation}
where $S_L(y,\mu):=y-\frac{\mu}{L}\nabla \tilde{f}(y,\mu)$, $q(y,\mu):=P_\mathcal{X}\left(S_L(y,\mu)\right)-S_L(y,\mu)$. We take problem \eqref{sub} as the unique subproblem of the proposed SFIHT algorithm. See Algorithm \ref{alg:alg1}. Upon the above fact, we know that SFIHT algorithm is well-defined.
\begin{algorithm}
	\caption{Smoothing Fast Iterative Hard Thresholding (SFIHT) algorithm}
	\label{alg:alg1}
	\begin{algorithmic}
		\STATE{\textbf{Initialization:} Take $x^{1}=x^0\in \mathcal{X}$, $L>L_{\tilde{f}}$, $\mu_{1}=\mu_0 \in (0, \bar{\mu}]$ and $\sigma\in\left(0,2\right)$. Set $k=1$.}
		
		\WHILE{a termination criterion is not met,}
		
		\STATE{\begin{enumerate}
				\item [\textbf{Step 1.}] Choose $\beta_k \in \left[0,\sqrt{\frac{\mu_k}{\mu_{k-1}}} \right)$.
				
				\item [\textbf{Step 2.}] Compute
				\begin{align}
				& y^k=x^k+\beta_k(x^k-x^{k-1}),   \label{yk}\\
				& \bar{x}^{k+1}\in {\arg\min}\{Q(x, y^k, \mu_k): x\in \mathcal{X}\}. \label{xbar}
				\end{align}
				
				\item [\textbf{Step 3.}]
				 \textbf{(3a)} If $I(x^{k-1})=I(x^k)=I(\bar{x}^{k+1})$, let $$x^{k+1}=\bar{x}^{k+1}$$ and go to \textbf{Step 4}.
				
			\textbf{(3b)} Otherwise, choose $\beta_k\in \left[0,\sqrt{\frac{L-L_{\tilde{f}}}{4L}\frac{\mu_k}{\mu_{k-1}}}\ \right],$ compute \textbf{Step 2} to obtain $\bar{x}^{k+1}$.
			
				\quad\quad\textbf{(3b-1)} If $I(x^k)=I(\bar{x}^{k+1})$, let $$x^{k+1}=\bar{x}^{k+1}$$\quad\quad and go to \textbf{Step 4}.
				
				\quad\quad\textbf{(3b-2)} Otherwise, choose $\beta_k \in \left[0,\sqrt{\frac{L-L_{\tilde{f}}}{8L-4L_{\tilde{f}}}\frac{\mu_k}{\mu_{k-1}}}\ \right]$, compute \textbf{Step}\\ \quad\quad \textbf{2} to obtain $\bar{x}^{k+1}$ and set $x^{k+1}=\bar{x}^{k+1}$.
				
			\item [\textbf{Step 4.}] Set
			 \begin{equation*}\label{muup}
			\mu_{k+1} = \frac{\mu_0}{(k+2)^{\sigma}}.
			\end{equation*}
			Increment $k$ by one and return to \textbf{Step 1}.
			\end{enumerate}
	        }
		\ENDWHILE
		
		\textbf{Output}\quad  $x^{k}$, $\mu_k$ and $\beta_k$.
	\end{algorithmic}
\end{algorithm}

In each iteration of the SFIHT algorithm, the accelerated iterative hard thresholding method is used to find an approximate solution of problem \eqref{smf}. $\beta_k$ in Step 1 satisfies
$
\beta_k\in \left[0,\ \sqrt{\frac{\mu_k}{\mu_{k-1}}}\right),
$
which is a basic condition for the convergence analysis of the SFIHT algorithm. The new iterate depends on the two previous computed iterates. In order to improve the effect of extrapolation, we divide the extrapolation coefficients into three cases in the algorithm. We adjust the range of extrapolation coefficients according to the relationships among the $\ell_0$ norms of the new computed iterate and two previous iterates. Step 4 is to update the smoothing parameter $\mu_k$, which ensures that $\mu_k$ is decreasing and tends to zero.
\subsection{Convergence analysis}
Let $\{x^k\}$, $\{y^k\}$ and $\{\mu_k\}$ be the output iterates generated by the SFIHT algorithm.
For $\kappa >0$ in Definition \ref{def:def1}-(v), we introduce the following sequence
\begin{equation*}
H(x^{k},\mu_{k},\tau_{k}):=\tilde{F}(x^{k},\mu_{k})+\kappa\mu_{k}+\tau_{k}\|x^{k}-x^{k-1}\|^2,
\end{equation*}
where $\tau_k >0$. Specially, we give a way of choosing $\tau_k$ as follows. For all $k\in\mathbb{N}$,
\begin{equation}\label{tauk}
\tau_k=\left\{
\begin{aligned}
&\frac{L}{4}\mu_{k-1}^{-1}+\frac{L}{4}\beta_k^2\mu_k^{-1}& &\mbox{if}~I(x^{k-1})=I(x^k)=I(x^{k+1}),& \\
&\frac{L-L_{\tilde{f}}}{8}\mu_{k-1}^{-1} & &\mbox{otherwise}.&
\end{aligned}\right.
\end{equation}
In the following, we will use the above $\tau_k$ to analyze the convergence of the SFIHT algorithm, which is the key point for all the following analysis.

First of all, we show that sequence $\{\tau_k\}$ guarantees that sequence $H(x^{k},\mu_{k},\tau_{k})$ is nonincreasing and convergent, i.e., it can be used as an energy function of the SFIHT algorithm. To do this, we first state some basic properties and analyze the relationships between $H(x^k,\mu_k,\tau_k)$ and $H(x^{k+1},\mu_{k+1},\tau_{k+1})$.
\begin{lemma}\label{lem:hk}
		The following statements hold.
	\begin{enumerate}[{\rm (i)}]
		\item For every $k\in \mathbb{N}$, $x^k\in \mathcal{X}$, $\{\mu_k\}$ is monotone decreasing and $\lim_{k\to\infty}\mu_k=0$.
	  \item When $I(x^{k})\neq I(x^{k+1})$, we have
		\begin{equation}\label{H1}
		\begin{split}
		& H(x^{k+1},\mu_{k+1},\tau_{k+1})- H(x^{k},\mu_{k},\tau_{k}) \\
		& \leq\left[\tau_{k+1}- \frac{L-L_{\tilde{f}}}{4\mu_k}\right] \|x^{k+1}-x^{k}\|^2 + \left[\frac{2L-L_{\tilde{f}}}{2\mu_k}\beta_k^2-\tau_{k} \right]\|x^{k}-x^{k-1}\|^2.
		\end{split}
		\end{equation}
		
		\item When $I(x^k)=I(x^{k+1})$, we have
		\begin{equation}\label{H2}
		\begin{split}
		&H(x^{k+1},\mu_{k+1},\tau_{k+1})- H(x^{k},\mu_{k},\tau_{k}) \\
		&\leq  \left[\tau_{k+1}-\frac{L}{2\mu_k}\right] \|x^{k+1}-x^k\|^2
		+ \left[\frac{L}{2\mu_k}\beta_k^2-\tau_{k}\right]\|x^{k}-x^{k-1}\|^2.
		\end{split}
		\end{equation}
	\end{enumerate}
\end{lemma}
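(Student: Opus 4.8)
The plan is to reduce both (ii) and (iii) to a single descent estimate on $\tilde F(x^{k+1},\mu_k)-\tilde F(x^k,\mu_k)$, where throughout $x^{k+1}=\bar x^{k+1}$ is the exact minimizer from \eqref{xbar}. First I would dispose of (i): every $\bar x^{k+1}$ is built from $P_{\mathcal X}(S_L(y^k,\mu_k))$ and the value $0$ through the closed form \eqref{solu}, and since $0\in\mathcal X$ and $P_{\mathcal X}$ maps into $\mathcal X$, an induction starting from $x^0=x^1\in\mathcal X$ gives $x^k\in\mathcal X$; the update $\mu_{k+1}=\mu_0(k+2)^{-\sigma}$ with $\sigma>0$ is manifestly decreasing with limit $0$.

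For the common reduction I would first absorb the change of smoothing parameter. Using Definition \ref{def:def1}-(v) with $\mu_{k+1}\le\mu_k$ gives $\tilde f(x^{k+1},\mu_{k+1})\le\tilde f(x^{k+1},\mu_k)+\kappa(\mu_k-\mu_{k+1})$, hence $\tilde F(x^{k+1},\mu_{k+1})+\kappa\mu_{k+1}\le\tilde F(x^{k+1},\mu_k)+\kappa\mu_k$. Substituting this into the definition of $H$ reduces the left-hand side of both (ii) and (iii) to
\[
H(x^{k+1},\mu_{k+1},\tau_{k+1})-H(x^k,\mu_k,\tau_k)\le \tilde F(x^{k+1},\mu_k)-\tilde F(x^k,\mu_k)+\tau_{k+1}\|x^{k+1}-x^k\|^2-\tau_k\|x^k-x^{k-1}\|^2,
\]
so everything comes down to estimating $\tilde F(x^{k+1},\mu_k)-\tilde F(x^k,\mu_k)$, while recalling from \eqref{yk} that $\|x^k-y^k\|^2=\beta_k^2\|x^k-x^{k-1}\|^2$.

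For (iii), where $I(x^k)=I(\bar x^{k+1})$, the $\lambda\|\cdot\|_0$ terms cancel and the support structure permits a strong-convexity argument. Writing $\phi(x):=\langle\nabla\tilde f(y^k,\mu_k),x-y^k\rangle+\frac{L}{2\mu_k}\|x-y^k\|^2$, the closed form \eqref{solu} identifies $\bar x^{k+1}$ as the minimizer of $\phi$ over the convex set $\mathcal X_{I(\bar x^{k+1})}$; since $I(x^k)=I(\bar x^{k+1})$ forces $x^k\in\mathcal X_{I(\bar x^{k+1})}$, strong convexity of $\phi$ with modulus $L\mu_k^{-1}$ yields $\phi(x^k)-\phi(\bar x^{k+1})\ge\frac{L}{2\mu_k}\|x^k-\bar x^{k+1}\|^2$. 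Combining this with the descent lemma \eqref{tfl} and the gradient inequality for the convex function $\tilde f(\cdot,\mu_k)$ at $y^k$ gives $\tilde F(x^{k+1},\mu_k)-\tilde F(x^k,\mu_k)\le\frac{L}{2\mu_k}\beta_k^2\|x^k-x^{k-1}\|^2-\frac{L}{2\mu_k}\|x^{k+1}-x^k\|^2$, which is precisely \eqref{H2} after inserting the reduction above.

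For (ii), where $I(x^k)\neq I(\bar x^{k+1})$, the supports differ, $x^k$ need not lie in $\mathcal X_{I(\bar x^{k+1})}$, and the strong-convexity route is unavailable; instead I would exploit the slack between $L$ and $L_{\tilde f}$. Applying the descent lemma with the sharper Lipschitz constant $L_{\tilde f}$ (the gradient of $\tilde f(\cdot,\mu_k)$ is $L_{\tilde f}\mu_k^{-1}$-Lipschitz by Definition \ref{def:def1}-(vi)), the optimality of $\bar x^{k+1}$ in \eqref{xbar} tested against $x^k$ with the full constant $L$, and convexity at $y^k$, I obtain a surplus term $-\frac{L-L_{\tilde f}}{2\mu_k}\|\bar x^{k+1}-y^k\|^2$. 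The last ingredient is the elementary bound $\|\bar x^{k+1}-y^k\|^2\ge\frac12\|x^{k+1}-x^k\|^2-\beta_k^2\|x^k-x^{k-1}\|^2$, obtained by writing $\bar x^{k+1}-y^k=(x^{k+1}-x^k)-\beta_k(x^k-x^{k-1})$; this converts the surplus into $-\frac{L-L_{\tilde f}}{4\mu_k}\|x^{k+1}-x^k\|^2$ plus a multiple of $\beta_k^2\|x^k-x^{k-1}\|^2$, and collecting coefficients reproduces \eqref{H1}. The main obstacle is exactly this split: extracting a negative $\|x^{k+1}-x^k\|^2$ term requires genuinely different mechanisms — strong convexity on the reduced feasible set when the supports agree, versus the $L$-versus-$L_{\tilde f}$ gap together with the crude variance inequality when they do not — and one must track carefully that the $\ell_0$ terms cancel exactly in the first case but are only controlled through optimality in the second.
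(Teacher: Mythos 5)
Your proposal is correct and follows essentially the same route as the paper: the smoothing-parameter term is absorbed exactly as in \eqref{fne}, your case (iii) is the paper's three-point-identity-plus-variational-inequality argument repackaged as the strong-convexity property of the minimizer of $\phi$ over $\mathcal{X}_{I(\bar{x}^{k+1})}$, and your case (ii) uses the same descent-lemma/optimality/convexity chain, with your bound $\|a-b\|^2\ge\tfrac12\|a\|^2-\|b\|^2$ being algebraically identical to the paper's Young-inequality step on the cross term. A minor point in your favor: working over the restricted set $\mathcal{X}_{I(\bar{x}^{k+1})}$ in (iii) is cleaner than the paper's \eqref{sep}, which states the variational inequality over all of $\mathcal{X}$ even though only the restricted one (which is what gets used, at $x=x^k$) is actually valid in general.
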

\begin{proof}
$\mbox{(i).~}$ By the proposed SFIHT algorithm, it's easy to verify this statement.

$\mbox{(ii).~}$ Using \eqref{tfl} with $x=x^{k+1}$, $y=y^k$ and $\mu=\mu_k$, we obtain
\begin{equation}\label{gra}
\begin{split}
&\tilde{f}(x^{k+1},\mu_k)\leq \tilde{f}(y^k,\mu_k) + \langle \nabla \tilde{f}(y^k,\mu_k),x^{k+1}-y^k \rangle + \frac{L_{\tilde{f}}}{2\mu_k} \|x^{k+1}-y^k\|^2 .\\
\end{split}
\end{equation}
Since $\tilde{f}(x,\mu)$ is convex with respect to $x$ for any fixed $\mu\in(0, \bar{\mu}]$, it holds that
\begin{equation}\label{con}
\tilde{f}(y^k,\mu_k)+ \langle \nabla \tilde{f}(y^k,\mu_k),x-y^k \rangle\leq\tilde{f}(x,\mu_k), \quad \forall x\in \mathcal{X}.
\end{equation}
According to \eqref{gra}, for any $x\in \mathcal{X}$, $L>L_{\tilde{f}}$, we have
\begin{equation*}
\begin{split}
& \tilde{F}(x^{k+1},\mu_k)
= \tilde{f}(x^{k+1},\mu_k)+\lambda \|x^{k+1}\|_0 \\
\leq & \tilde{f}(y^k,\mu_k) + \langle \nabla \tilde{f}(y^k,\mu_k),x^{k+1}-y^k \rangle +  \frac{L}{2\mu_k} \|x^{k+1}-y^k\|^2 +\lambda \|x^{k+1}\|_0  \\
 &+ \frac{L_{\tilde{f}} - L}{2\mu_k}\|x^{k+1}-y^k\|^2.
\end{split}
\end{equation*}
This, combined with the definition of $x^{k+1}$, yields that for $x^k\in \mathcal{X}$,
\begin{equation*}
\begin{split}
\tilde{F}(x^{k+1},\mu_k)\leq & \tilde{f}(y^k,\mu_k) + \langle \nabla \tilde{f}(y^k,\mu_k),x^k-y^k \rangle +  \frac{L}{2\mu_k} \|x^k-y^k\|^2 +\lambda \|x^k\|_0 \\
& + \frac{L_{\tilde{f}} - L}{2\mu_k} \|x^{k+1}-y^k\|^2  \\
\leq & \tilde{F}(x^k,\mu_k)+ \frac{L}{2\mu_k} \|x^k-y^k\|^2 + \frac{L_{\tilde{f}} - L}{2\mu_k} \|x^{k+1}-y^k\|^2, \\
\end{split}
\end{equation*}
where the last inequality holds by \eqref{con}.
The above inequality together with $y^{k}=x^{k}+\beta_k(x^k-x^{k-1})$, gives
\begin{equation}\label{ftd}
\begin{split}
\tilde{F}(x^{k+1},\mu_k)\leq & \tilde{F}(x^{k},\mu_k)+ \frac{L_{\tilde{f}}}{2\mu_k} \beta_k^2\|x^k-x^{k-1}\|^2 + \frac{L_{\tilde{f}} - L }{2\mu_k}\|x^{k+1}-x^{k}\|^2 \\
& + \frac{L - L_{\tilde{f}}}{\mu_k}\langle x^{k+1}-x^{k}, \beta_k(x^k-x^{k-1})\rangle. \\
\end{split}
\end{equation}
Using the algebratic inequality, it holds that
\begin{equation*}
\frac{L - L_{\tilde{f}}}{\mu_k}\langle x^{k+1}-x^{k}, \beta_k(x^k-x^{k-1})\rangle \leq \frac{L - L_{\tilde{f}}}{4\mu_k}\|x^{k+1}-x^{k}\|^2
+ \frac{L - L_{\tilde{f}}}{\mu_k}\beta_k^2\|x^{k}-x^{k-1}\|^2.
\end{equation*}
Combining this with \eqref{ftd}, we obtain
\begin{equation}\label{ft}
\begin{split}
\tilde{F}(x^{k+1},\mu_k) \leq \tilde{F}(x^{k},\mu_k) &- \frac{L -L_{\tilde{f}}}{4\mu_k} \|x^{k+1}-x^{k}\|^2 +\frac{2L-L_{\tilde{f}}}{2\mu_k}\beta_k^2\|x^{k}-x^{k-1}\|^2.
\end{split}
\end{equation}
By Definition \ref{def:def1}-(v) and the monotone decreasing of $\{\mu_k\}$, we have
\begin{equation}\label{fne}
\tilde{F}(x^{k+1},\mu_{k+1})+\kappa \mu_{k+1}-\kappa\mu_k\leq \tilde{F}(x^{k+1},\mu_{k}),
\end{equation}
plugging \eqref{fne} in \eqref{ft}, then we get
\begin{equation}\label{f}
\begin{split}
&\tilde{F}(x^{k+1},\mu_{k+1})+\kappa \mu_{k+1} \\
\leq &\tilde{F}(x^{k},\mu_{k}) + \kappa \mu_{k}- \frac{L -L_{\tilde{f}}}{4\mu_k} \|x^{k+1}-x^{k}\|^2+ \frac{2L - L_{\tilde{f}}}{2\mu_k}\beta_k^2\|x^{k}-x^{k-1}\|^2.
\end{split}
\end{equation}
Using \eqref{f} and the definition of $H(x^{k},\mu_{k},\tau_{k})$, we obtain \eqref{H1} immediately.

$\mbox{(iii).~}$ Let $$G(x,y,\mu):=\tilde{f}(y,\mu) + \langle \nabla \tilde{f}(y,\mu),x-y \rangle + \frac{L}{2\mu} \|x-y\|^2.$$
For each fixed $y$ and $\mu$, $G(x,y,\mu)$ is differentiable and strongly convex with respect to $x$ with modulus $L\mu^{-1}$.
Hence, for arbitrary $x \in \mathcal{X}$, we have
\begin{equation*}
	\begin{split}
		& \langle\nabla_x G(x^{k+1},y^k,\mu _k),x-x^{k+1}\rangle \\
		=& \langle \nabla \tilde{f}(y^k,\mu_k),x-x^{k+1} \rangle + \frac{L}{2\mu_k} \|x-y^k\|^2-\frac{L }{2\mu_k} \|x^{k+1}-x\|^2-\frac{L}{2\mu_k} \|x^{k+1}-y^k\|^2. \\
	\end{split}
\end{equation*}
When $I(x^k)=I(x^{k+1})$, in view of the construction of the SFIHT algorithm, we find
\begin{equation}\label{sep}
x^{k+1}= \arg\min_{x\in \mathcal{X}}\{G(x,y^k,\mu_k)+\lambda\|x^k\|_0\}.
\end{equation}
By the above fact, we obtain that
$\langle\nabla_x G(x^{k+1},y^k,\mu _k),x-x^{k+1}\rangle \geq 0$, $\forall x \in \mathcal{X},$
which indicates that, for all $ x\in \mathcal{X}$,
\begin{equation}\label{nc}
0\leq \langle \nabla \tilde{f}(y^k,\mu_k),x-x^{k+1} \rangle -\frac{L}{2\mu_k}\|x^{k+1}-y^k\|^2+ \frac{L}{2\mu_k} \|x-y^k\|^2-\frac{L}{2\mu_k} \|x^{k+1}-x\|^2.
\end{equation}
Summing up \eqref{gra} and \eqref{nc}, and by $L>L_{\tilde{f}}$, we have
\begin{equation}\label{rL}
\begin{split}
&\tilde{f}(x^{k+1},\mu_k) \\
\leq & \tilde{f}(y^k,\mu_k)+\langle \nabla \tilde{f}(y^k,\mu_k),x-y^k \rangle + \frac{L }{2\mu_k} \|x-y^k\|^2 -\frac{L}{2\mu_k} \|x^{k+1}-x\|^2.
\end{split}
\end{equation}
Substituting \eqref{con} into the right side of \eqref{rL}, one has
\begin{equation}\label{ftid}
\tilde{f}(x^{k+1},\mu_k) \leq \tilde{f}(x,\mu_k) + \frac{L}{2\mu_k} \|x-y^k\|^2-\frac{L}{2\mu_k} \|x^{k+1}-x\|^2.
\end{equation}
Letting $x=x^k$ in \eqref{ftid}, and using the definition of $y^{k}$, $\|x^k\|_0=\|x^{k+1}\|_0$ and \eqref{fne}, we obtain
\begin{equation}\label{fe}
\begin{split}
&\tilde{F}(x^{k+1},\mu_{k+1})+\kappa\mu_{k+1} \\
\leq &\tilde{F}(x^k,\mu_{k})+\kappa\mu_{k}+ \frac{L}{2\mu_k}\beta_k^2 \|x^k-x^{k-1}\|^2-\frac{L}{2\mu_k} \|x^{k+1}-x^k\|^2.
\end{split}
\end{equation}
By the definition of $H(x^{k},\mu_{k},\tau_{k})$ and \eqref{fe}, we see the statement in item (iii).
\end{proof}

For simplicity, we define the notations
\begin{equation}\label{dk}
\gamma:=\min \left\{\frac{L}{4}, \frac{L-L_{\tilde{f}}}{8}\right\},\quad \mathcal{K}:=\{k: I(x^k)\neq I(x^{k+1})\}
\end{equation}
and
\begin{equation}\label{nu}
	\nu:= \min\left\{l_i^2\mu_0^{-1},u_j^2\mu_0^{-1},2\lambda L^{-1}: l_i\neq 0, u_j\neq 0,\ i,j\in\{1,2,\ldots,n\}\right\}.
\end{equation}

\begin{lemma}\label{lem:hnin}
		The following statements hold.
	\begin{enumerate}[{\rm (i)}]
		\item When $k\in\mathcal{K}$, $\frac{2L-L_{\tilde{f}}}{2\mu_{k}}\beta_{k}^2\leq \tau_{k}$; otherwise, $\frac{L}{2\mu_{k}}\beta_{k}^2 \leq \tau_{k}$.
   \item $\{H(x^{k},\mu_{k},\tau_{k})\}$ is nonincreasing and convergent, i.e., $\lim_{k\to\infty}H(x^k,\mu_k,\tau_k) =H_{\infty} <\infty$.
   \end{enumerate}
\end{lemma}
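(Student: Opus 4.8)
The plan is to prove item (i) by a direct case analysis that matches each exit branch of Algorithm~\ref{alg:alg1} to the corresponding formula for $\tau_k$ in \eqref{tauk}, and then to feed item (i) into the two estimates \eqref{H1} and \eqref{H2} of Lemma~\ref{lem:hk} to obtain monotonicity; boundedness from below will then yield convergence.

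For item (i), I would first note that $I(x^k)\neq I(x^{k+1})$ can only occur through branch \textbf{(3b-2)}, whereas $I(x^k)=I(x^{k+1})$ means the iterate was accepted in \textbf{(3a)} or \textbf{(3b-1)}. When $k\in\mathcal{K}$, the active $\beta_k$ lies in the \textbf{(3b-2)} range, so $\beta_k^2\leq\frac{L-L_{\tilde{f}}}{8L-4L_{\tilde{f}}}\frac{\mu_k}{\mu_{k-1}}$, and since $I(x^k)\neq I(x^{k+1})$ the second line of \eqref{tauk} gives $\tau_k=\frac{L-L_{\tilde{f}}}{8}\mu_{k-1}^{-1}$; substituting and using $8L-4L_{\tilde{f}}=4(2L-L_{\tilde{f}})$ shows $\frac{2L-L_{\tilde{f}}}{2\mu_k}\beta_k^2\leq\tau_k$ with equality at the endpoint. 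When $k\notin\mathcal{K}$ I would split further: if $I(x^{k-1})=I(x^k)=I(x^{k+1})$ then $\tau_k$ is the first line of \eqref{tauk} and the desired inequality $\frac{L}{2\mu_k}\beta_k^2\leq\tau_k$ reduces to $\beta_k^2\leq\mu_k/\mu_{k-1}$, which is the basic Step~1 bound; otherwise the iterate was accepted in \textbf{(3b-1)} with the \textbf{(3b)} range $\beta_k^2\leq\frac{L-L_{\tilde{f}}}{4L}\frac{\mu_k}{\mu_{k-1}}$ and $\tau_k=\frac{L-L_{\tilde{f}}}{8}\mu_{k-1}^{-1}$, so a one-line substitution again gives the claim with equality at the endpoint.

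For item (ii), the coefficient of $\|x^k-x^{k-1}\|^2$ in both \eqref{H1} and \eqref{H2} is already nonpositive by item (i), so the whole burden is to show the coefficient of $\|x^{k+1}-x^k\|^2$ is nonpositive too, i.e. $\tau_{k+1}\leq\frac{L-L_{\tilde{f}}}{4\mu_k}$ when $k\in\mathcal{K}$ and $\tau_{k+1}\leq\frac{L}{2\mu_k}$ when $k\notin\mathcal{K}$. The crucial book-keeping point is that the defining condition of $\tau_{k+1}$ in \eqref{tauk} is keyed to the triple $I(x^k),I(x^{k+1}),I(x^{k+2})$. If $k\in\mathcal{K}$ then $I(x^k)\neq I(x^{k+1})$ forces the second line, so $\tau_{k+1}=\frac{L-L_{\tilde{f}}}{8}\mu_k^{-1}$, exactly half of the required bound. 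If $k\notin\mathcal{K}$ and the triple is all equal, then $\tau_{k+1}=\frac{L}{4}\mu_k^{-1}+\frac{L}{4}\beta_{k+1}^2\mu_{k+1}^{-1}$, and here I would invoke the uniform fact that every admissible $\beta$-range in Steps~1, \textbf{(3b)}, \textbf{(3b-2)} is contained in $[0,\sqrt{\mu_{k+1}/\mu_k})$, so $\beta_{k+1}^2<\mu_{k+1}/\mu_k$ and hence $\tau_{k+1}<\frac{L}{2\mu_k}$; if instead the triple is not all equal (so $k+1\in\mathcal{K}$) then $\tau_{k+1}=\frac{L-L_{\tilde{f}}}{8}\mu_k^{-1}\leq\frac{L}{2\mu_k}$ trivially since $L-L_{\tilde{f}}<4L$. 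In every case both bracketed coefficients in \eqref{H1} and \eqref{H2} are $\leq 0$, so $H(x^{k+1},\mu_{k+1},\tau_{k+1})\leq H(x^k,\mu_k,\tau_k)$ and the sequence is nonincreasing.

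Convergence then follows from boundedness below: using \eqref{fka}, $\tilde{F}(x^k,\mu_k)+\kappa\mu_k\geq f(x^k)+\lambda\|x^k\|_0\geq f(x^k)$, and $f$ is bounded below on $\mathcal{X}$ by level-boundedness, so $H(x^k,\mu_k,\tau_k)\geq\inf_{\mathcal{X}}f>-\infty$; a nonincreasing sequence bounded below converges to a finite limit $H_\infty$. I expect the main obstacle to be not any individual estimate — each reduces to a single substitution — but the disciplined case analysis needed to read off the correct $\beta_k$-range and the correct value of $\tau_k$, and especially of $\tau_{k+1}$, from the branching logic of the algorithm, since $\tau_k$ is indexed by three consecutive sets $I(x^{k-1}),I(x^k),I(x^{k+1})$ while at iteration $k$ the algorithm itself branches on $I(x^{k-1}),I(x^k),I(\bar{x}^{k+1})$.
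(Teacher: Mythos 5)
Your proposal is correct and follows essentially the same route as the paper's own proof: part (i) by matching each exit branch of the algorithm to the corresponding line of \eqref{tauk}, part (ii) by feeding (i) into \eqref{H1}--\eqref{H2} and checking that the coefficient of $\|x^{k+1}-x^k\|^2$ is nonpositive (the paper additionally records the explicit negative decrease constants, which it reuses later, but nonpositivity suffices for this lemma), and convergence from boundedness below via \eqref{fka}. One harmless imprecision: in the case $I(x^{k-1})\neq I(x^k)=I(x^{k+1})$ the iterate need not be accepted in \textbf{(3b-1)} --- it can also exit through \textbf{(3b-2)} with the supports coinciding by chance --- but since the \textbf{(3b-2)} range is contained in the \textbf{(3b)} range (as $8L-4L_{\tilde{f}}\geq 4L$), the bound $\beta_k^2\leq\frac{L-L_{\tilde{f}}}{4L}\frac{\mu_k}{\mu_{k-1}}$ you substitute holds in either case.
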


\begin{proof}
	$\mbox{(i).~}$
	For $k\in \mathcal{K}$, there must be $\beta_k^2\leq \frac{L-L_{\tilde{f}}}{8L-4L_{\tilde{f}}}\frac{\mu_k}{\mu_{k-1}}$, which means that
	$\tau_k=\frac{L-L_{\tilde{f}}}{8}\mu_{k-1}^{-1}\geq\frac{2L-L_{\tilde{f}}}{2}\beta_k^2\mu_k^{-1}$.
	
	 $k\notin\mathcal{K}$ implies $k\in \mathcal{N}_1:=\{k : I(x^{k-1})=I(x^k)=I(x^{k+1})\}$ or $k\in \mathcal{N}_2 := \{k: I(x^{k-1})\neq I(x^k)= I(x^{k+1})\}$. From the SFIHT algorithm, we know $\beta_k^2<\frac{\mu_k}{\mu_{k-1}}$ for $k\in\mathcal{N}_1$ and $\beta_k^2\leq\frac{L-L_{\tilde{f}}}{4L}\frac{\mu_k}{\mu_{k-1}}$ for $k\in\mathcal{N}_2$. Then, for $k\in\mathcal{N}_1$, $\mu_{k-1}^{-1}>\beta_k^2\mu_k^{-1}$, and by \eqref{tauk}, we have
	\begin{equation*}
		\tau_k=\frac{L}{4}\mu_{k-1}^{-1}+\frac{L}{4}\beta_k^2\mu_k^{-1}>\frac{L}{4}\beta_k^2\mu_k^{-1}+\frac{L}{4}\beta_k^2\mu_k^{-1}=\frac{L}{2}\beta_k^2\mu_k^{-1};
		\end{equation*}
	for $k\in\mathcal{N}_2$, $\mu_{k-1}^{-1}\geq \frac{4L}{L-L_{\tilde{f}}}\beta_k^2\mu_k^{-1}$, and by \eqref{tauk}, we have
	$\tau_k=\frac{L-L_{\tilde{f}}}{8}\mu_{k-1}^{-1}\geq  \frac{L}{2}\beta_k^2\mu_k^{-1}.$
	 Then, we establish result (i).
	
$\mbox{(ii).~}$ We first analyze the nonincreasing of $\{H(x^{k},\mu_{k},\tau_{k})\}$, and divide the proof into two cases as follows.

Case 1. If $k\in \mathcal{K}$, by \eqref{tauk}, we have $\tau_{k+1}=\frac{L-L_{\tilde{f}}}{8}\mu_{k}^{-1}$. This together with \eqref{H1} and (i) of this lemma, it holds that
\begin{equation}\label{k1}
 H(x^{k+1},\mu_{k+1},\tau_{k+1})- H(x^{k},\mu_{k},\tau_{k})
 \leq - \frac{L-L_{\tilde{f}}}{8}\mu_k^{-1} \|x^{k+1}-x^{k}\|^2,\quad\forall k\in\mathcal{K}.
\end{equation}

Case 2. If $k\notin \mathcal{K}$, we know that $k\in\{I(x^{k})=I(x^{k+1})=I(x^{k+2})\}$ or $k\in\{I(x^{k})=I(x^{k+1})\neq I(x^{k+2})\}$.

When $k\in\{I(x^{k})=I(x^{k+1})=I(x^{k+2})\}$, $\tau_{k+1}=\frac{L}{4}\mu_{k}^{-1}+\frac{L}{4}\beta_{k+1}^2\mu_{k+1}^{-1}$. By \eqref{H2} and statement (i) in this lemma, we
obtain
 \begin{equation}\label{cas21}
 	\begin{split}
 		&H(x^{k+1},\mu_{k+1},\tau_{k+1})- H(x^{k},\mu_{k},\tau_{k}) \\
 		&\leq \left[\tau_{k+1}-\frac{L}{2\mu_{k}}\right] \|x^{k+1}-x^{k}\|^2
 		=-\frac{L}{4}\left[1-\beta_{k+1}^2\frac{\mu_{k}}{\mu_{k+1}}\right]\mu_{k}^{-1}\|x^{k+1}-x^{k}\|^2.
 	\end{split}
 \end{equation}
By $\beta_{k+1} \in\left[0,\sqrt{\frac{\mu_{k+1}}{\mu_{k}}}\right)$ in this case, we obtain $H(x^{k+1},\mu_{k+1},\tau_{k+1})\leq H(x^{k},\mu_{k},\tau_{k}).$

When $k\in\{I(x^{k})=I(x^{k+1})\neq I(x^{k+2})\}$, by \eqref{tauk}, we have $\tau_{k+1}=\frac{L-L_{\tilde{f}}}{8}\mu_{k}^{-1}$. According to \eqref{H2} and result (i) of this lemma, it yields that
\begin{equation}\label{cas22}
\begin{split}
H(x^{k+1},\mu_{k+1},\tau_{k+1})- H(x^{k},\mu_{k},\tau_{k}) \leq  - \frac{3L +L_{\tilde{f}}}{8} \mu_k^{-1} \|x^{k+1}-x^k\|^2.
\end{split}
\end{equation}

Hence, $\{H(x^{k},\mu_{k},\tau_{k})\}$ is nonincreasing. Since $f$ is bounded from below on $\mathcal{X}$ and $\{x^k\}\subseteq \mathcal{X}$, $\{H(x^{k},\mu_{k},\tau_{k})\}$ is also bounded from below on $\mathcal{X}$. This together with the nonincreasing of $\{H(x^{k},\mu_{k},\tau_{k})\}$ implies that $\{H(x^{k},\mu_{k},\tau_{k})\}$ is convergent.
\end{proof}
	
The next lemma explores the boundedness of sequence $\{x^k\}$, and gives an estimate on $\{x^k\}$ and $\{\mu_k\}$, which lays a foundation for the analysis of $I(x^k)$.
\begin{lemma}\label{lem:hcon}
The following statements hold:
	\begin{enumerate}[{\rm (i)}]
		\item sequence $\{x^k\}$ is bounded;
		
		\item for any $k\in\mathcal{K}$, $\|x^{k+1}-x^k\|^2\geq \nu \mu_k$, where $\nu$ is defined as in \eqref{nu}.
	\end{enumerate}
\end{lemma}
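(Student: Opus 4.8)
The plan is to treat the two statements separately, leaning on the energy-function machinery already in place. For part (i), I would exploit the monotonicity of $H$ from Lemma \ref{lem:hnin}(ii). Since the three terms $\lambda\|x^k\|_0$, $\kappa\mu_k$ and $\tau_k\|x^k-x^{k-1}\|^2$ appearing in $H(x^k,\mu_k,\tau_k)$ are all nonnegative, we have $\tilde{f}(x^k,\mu_k)+\kappa\mu_k\le H(x^k,\mu_k,\tau_k)\le H(x^1,\mu_1,\tau_1)$ for every $k$. Feeding this into the approximation bound \eqref{fka}, namely $f(x^k)\le \tilde{f}(x^k,\mu_k)+\kappa\mu_k$, gives the uniform estimate $f(x^k)\le H(x^1,\mu_1,\tau_1)=:r$ for all $k$. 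Hence $\{x^k\}\subseteq\{x\in\mathcal{X}:f(x)\le r\}$, which is bounded by the standing level-boundedness assumption on $f$ over $\mathcal{X}$; this yields boundedness of $\{x^k\}$.

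For part (ii), the engine is a lower bound on the nonzero entries of the iterates extracted from the closed form \eqref{solu}. I would first record the auxiliary fact that any iterate $x^m$ with $m\ge 2$ (so that $x^m=\bar{x}^m$ is produced by the subproblem at step $m-1$) satisfies $(x_i^m)^2\ge\nu\mu_{m-1}$ whenever $x_i^m\ne 0$. Indeed, a nonzero $x_i^m$ must equal $[P_{\mathcal{X}}(S_L(y^{m-1},\mu_{m-1}))]_i$ with $[S_L(y^{m-1},\mu_{m-1})]_i^2-[q(y^{m-1},\mu_{m-1})]_i^2\ge 2\lambda\mu_{m-1}/L$. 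Splitting according to where the projection lands on the box $\mathcal{X}$: in the interior case $[q]_i=0$, so $(x_i^m)^2=[S_L]_i^2\ge 2\lambda\mu_{m-1}/L\ge\nu\mu_{m-1}$ since $\nu\le 2\lambda L^{-1}$; on a boundary face the value is $x_i^m\in\{l_i,u_i\}\setminus\{0\}$, whence $(x_i^m)^2\in\{l_i^2,u_j^2\}\ge\nu\mu_0\ge\nu\mu_{m-1}$ by the definition \eqref{nu} of $\nu$ together with $\mu_0\ge\mu_{m-1}$.

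With this fact, fix $k\in\mathcal{K}$, so $I(x^k)\ne I(x^{k+1})$ and there exists an index $i$ for which exactly one of $x_i^k,x_i^{k+1}$ is zero. If $x_i^k=0$ and $x_i^{k+1}\ne 0$, apply the auxiliary bound to $x^{k+1}$ to obtain $(x_i^{k+1})^2\ge\nu\mu_k$; if $x_i^k\ne 0$ and $x_i^{k+1}=0$, apply it to $x^k$ to obtain $(x_i^k)^2\ge\nu\mu_{k-1}\ge\nu\mu_k$, using the monotonicity of $\{\mu_k\}$ from Lemma \ref{lem:hk}(i). In both situations the coordinate that changes its zero/nonzero status contributes $|x_i^{k+1}-x_i^k|^2\ge\nu\mu_k$, and since $\|x^{k+1}-x^k\|^2$ dominates any single squared coordinate difference, the desired inequality $\|x^{k+1}-x^k\|^2\ge\nu\mu_k$ follows.

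I expect the main obstacle to be the auxiliary lower bound on nonzero coordinates: one must carefully work through the three branches of \eqref{solu}, correctly relating the threshold $2\lambda\mu/L$ and the box data $l_i,u_j$ to the single constant $\nu$ in \eqref{nu}, while managing the index shift between $\mu_{k-1}$ and $\mu_k$ via $\mu_{k-1}\ge\mu_k$. The boundary-projection subcase is the delicate one, since it requires observing that a \emph{nonzero} projected value can only be $l_i$ or $u_j$ with $l_i\ne 0$ or $u_j\ne 0$ — precisely the terms retained in the minimum defining $\nu$. (The iteration $k=1$ in the second subcase, where $x^1=x^0$ is not a subproblem output, would be handled from the initialization or noted separately.)
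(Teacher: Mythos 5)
Your proposal is correct and follows essentially the same route as the paper: part (i) via the monotonicity of $H(x^k,\mu_k,\tau_k)$, the bound \eqref{fka}, and level boundedness, and part (ii) via the three-branch case analysis of the closed-form solution \eqref{solu} (boundary values $l_i,u_i$ versus the unclipped case with threshold $2\lambda\mu/L$), the definition \eqref{nu} of $\nu$, and the monotonicity of $\{\mu_k\}$. Your packaging of the coordinate lower bound as an auxiliary fact, and your explicit flagging of the $k=1$ edge case (where $x^1=x^0$ is not a subproblem output, a point the paper silently glosses over), are minor refinements rather than a different approach.
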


\begin{proof}
$\mbox{(i).~}$ By \eqref{fka} and the definition of $\{H(x^{k},\mu_{k},\tau_{k})\}$, we have
\begin{equation*}
\begin{split}
F(x^k)& \leq \tilde{F}(x^k, \mu_{k})+\kappa \mu_{k} \leq H(x^k, \mu_{k}, \tau_k) \leq H(x^1, \mu_{1}, \tau_1) =\tilde{F}(x^1, \mu_{1})+\kappa \mu_{1},
\end{split}
\end{equation*}
which together with the level boundedness of $F$ on $\mathcal{X}$ gives result (i).

$\mbox{(ii).~}$
For any $k\in \mathcal{K}$, from \eqref{solu}, there exists some $i\in \{1,2,\ldots,n\}$ such that $$x^{k}_i=[P_\mathcal{X}(S_L(y^{k-1},\mu_{k-1}))]_i\neq 0,\ x^{k+1}_i=0$$ or $$ x^{k}_i=0,\ x^{k+1}_i=[P_\mathcal{X}(S_L(y^{k},\mu_{k}))]_i\neq 0.$$
When $x^{k}_i=[P_\mathcal{X}(S_L(y^{k-1},\mu_{k-1}))]_i\neq 0$ and $x^{k+1}_i=0$,
by the definition of $P_\mathcal{X}(\cdot)$, there exist three cases: (a) $x^{k}_i = l_i < 0$; (b) $x^{k}_i = u_i > 0$; (c) $x^{k}_i=[S_L(y^{k-1},\mu_{k-1})]_i$. For case (a), we have
\begin{equation}\label{con1}
\|x^{k+1}-x^{k}\|^2 \geq |x^{k+1}_i-x^{k}_i|^2=|x^{k}_i|^2=|l_i|^2 \geq \nu\mu_0.
\end{equation}
For case (b),  similar to the analysis in case (a), we see that \eqref{con1} also holds.
For case (c), according to \eqref{solu}, we have $\left[S_L(y^{k-1},\mu_{k-1})\right]^2_i \geq 2\lambda L^{-1}\mu_{k-1}$. Hence,
\begin{equation*}
	\|x^{k+1}-x^{k}\|^2 \geq |x^{k+1}_i-x^{k}_i|^2=|x^{k}_i|^2=\left[S_L(y^{k-1},\mu_{k-1})\right]^2_i \geq \nu\mu_{k-1}.
\end{equation*}
These together with the decreasing of $\{\mu_k\}$, yields that
\begin{equation}\label{con2}
\|x^{k+1}-x^{k}\|^2 \geq \nu\mu_{k}.
\end{equation}
When $x^{k}_i=0$ and $x^{k+1}_i=[P_\mathcal{X}(S_L(y^{k},\mu_{k}))]_i\neq 0$, by the similar arguments, we can get the same result as above. Then, we have thus proved this statement.
\end{proof}

Now we show that the support set of $\{x^k\}$ generated by the SFIHT algorithm will no longer change after finitely many iterations, and give some other useful estimates.
\begin{lemma}\label{lem:ixk}
 The sequences generated by the SFIHT algorithm own the following properties:
	\begin{enumerate}[{\rm (i)}]
		
		\item $I(x^k)=\{i:x^k_i=0\}$ changes finite times at most;
		
		\item the output extrapolation coefficients can be chosen to satisfy $\sup_k\beta_k=1$;
		
		\item $\sum_{k=1}^{\infty}\left[1-\beta_{k+1}^2\frac{\mu_{k}}{\mu_{k+1}}\right]
		\mu_{k}^{-1}\|x^{k+1}-x^{k}\|^2<\infty;$
		
		\item $\lim_{k\to\infty}\left[f(x^k)+\tau_k\|x^k-x^{k-1}\|^2\right]$ exists.
	\end{enumerate}
\end{lemma}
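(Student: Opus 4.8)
The plan is to deduce all four items from two facts already in hand: the monotonicity and convergence of the energy sequence $\{H(x^k,\mu_k,\tau_k)\}$ (Lemma~\ref{lem:hnin}(ii)), and the quantitative lower bound $\|x^{k+1}-x^k\|^2\ge \nu\mu_k$ on support changes (Lemma~\ref{lem:hcon}(ii)). Item (i) is the linchpin and everything else is bookkeeping on top of it. For (i), I would take any $k\in\mathcal{K}$ and feed Lemma~\ref{lem:hcon}(ii) into estimate \eqref{k1}: since \eqref{k1} gives $H(x^{k+1},\mu_{k+1},\tau_{k+1})-H(x^k,\mu_k,\tau_k)\le -\frac{L-L_{\tilde f}}{8}\mu_k^{-1}\|x^{k+1}-x^k\|^2$ and $\|x^{k+1}-x^k\|^2\ge\nu\mu_k$, the factors $\mu_k^{-1}$ and $\mu_k$ cancel exactly, leaving a decrease of at least the fixed positive amount $\gamma\nu=\frac{(L-L_{\tilde f})\nu}{8}$ at every $k\in\mathcal{K}$. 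Because $\{H(x^k,\mu_k,\tau_k)\}$ is nonincreasing and convergent, the nonnegative increments telescope to the finite total $H(x^1,\mu_1,\tau_1)-H_\infty$, so each member of $\mathcal{K}$ spends at least $\gamma\nu>0$ of a finite budget. Hence $\mathcal{K}$ is finite and $I(x^k)$ changes at most finitely often.

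For (ii) I would use (i) to fix $K_0$ with $I(x^{k-1})=I(x^k)=I(x^{k+1})$ for all $k\ge K_0$, so from $K_0$ on the algorithm operates in branch (3a) and admits any $\beta_k\in\big[0,\sqrt{\mu_k/\mu_{k-1}}\big)$. Since Step~4 yields $\mu_k=\mu_0(k+1)^{-\sigma}$ for large $k$, the ratio $\mu_k/\mu_{k-1}=\big(\tfrac{k}{k+1}\big)^{\sigma}$ lies in $(0,1)$ and increases to $1$, so $\sqrt{\mu_k/\mu_{k-1}}\uparrow 1$. Choosing each admissible $\beta_k$ close to (but below) its endpoint keeps $\beta_k<1$ for all $k$ while forcing $\beta_k\to1$, whence $\sup_k\beta_k=1$.

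For (iii), note first that each summand is nonnegative because Step~1 enforces $\beta_{k+1}^2<\mu_{k+1}/\mu_k$, i.e. $1-\beta_{k+1}^2\mu_k/\mu_{k+1}>0$. For $k\ge K_0$ one has $I(x^k)=I(x^{k+1})=I(x^{k+2})$, so \eqref{cas21} applies and gives $\frac{L}{4}\big[1-\beta_{k+1}^2\mu_k/\mu_{k+1}\big]\mu_k^{-1}\|x^{k+1}-x^k\|^2\le H(x^k,\mu_k,\tau_k)-H(x^{k+1},\mu_{k+1},\tau_{k+1})$; summing over $k\ge K_0$ telescopes to $H(x^{K_0},\mu_{K_0},\tau_{K_0})-H_\infty<\infty$, and the finitely many terms with $k<K_0$ are each finite since $\{x^k\}$ is bounded (Lemma~\ref{lem:hcon}(i)) and every $\mu_k>0$. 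Adding the two finite contributions proves (iii). For (iv), I would expand $H(x^k,\mu_k,\tau_k)=\tilde f(x^k,\mu_k)+\lambda\|x^k\|_0+\kappa\mu_k+\tau_k\|x^k-x^{k-1}\|^2$ and solve for the target quantity,
\begin{equation*}
f(x^k)+\tau_k\|x^k-x^{k-1}\|^2=H(x^k,\mu_k,\tau_k)-\big(\tilde f(x^k,\mu_k)-f(x^k)\big)-\lambda\|x^k\|_0-\kappa\mu_k,
\end{equation*}
then let $k\to\infty$: $H(x^k,\mu_k,\tau_k)\to H_\infty$ by Lemma~\ref{lem:hnin}(ii); $|\tilde f(x^k,\mu_k)-f(x^k)|\le\kappa\mu_k\to0$ by \eqref{fka} and $\mu_k\to0$; $\|x^k\|_0$ is eventually the constant $s:=n-|I(x^{K_0})|$ by (i); and $\kappa\mu_k\to0$. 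Thus the limit exists and equals $H_\infty-\lambda s$.

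The main obstacle is entirely concentrated in item (i): the crux is recognizing that the energy drop in \eqref{k1} carries a factor $\mu_k^{-1}$ which cancels against the $\mu_k$ in the lower bound $\|x^{k+1}-x^k\|^2\ge\nu\mu_k$, so that each support change costs a scale-free constant $\gamma\nu$ rather than a vanishing amount. This cancellation is what converts ``infinitely many support changes'' into ``infinitely many fixed-size decrements of a convergent sequence,'' a contradiction. Once (i) is secured, items (ii)–(iv) are routine consequences of the convergence of $H$, the update rule for $\mu_k$, and the smoothing error bound \eqref{fka}.
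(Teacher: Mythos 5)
Your proposal is correct and follows essentially the same route as the paper: item (i) is exactly the paper's argument (the factor $\mu_k^{-1}$ in \eqref{k1} cancels against the lower bound $\|x^{k+1}-x^k\|^2\ge\nu\mu_k$ from Lemma \ref{lem:hcon}-(ii), so each $k\in\mathcal{K}$ costs a fixed amount $\tfrac{(L-L_{\tilde f})\nu}{8}$ of the finite budget $H(x^1,\mu_1,\tau_1)-H_\infty$), and items (ii) and (iv) are argued just as in the paper (eventual branch (3a) plus $\sqrt{\mu_k/\mu_{k-1}}\uparrow 1$; the sandwich from \eqref{fka} plus convergence of $H$ and eventual constancy of $\|x^k\|_0$). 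The only cosmetic difference is in (iii): the paper sums the unified inequality $H(x^{k+1},\mu_{k+1},\tau_{k+1})-H(x^k,\mu_k,\tau_k)\le-\gamma\bigl(1-\beta_{k+1}^2\tfrac{\mu_k}{\mu_{k+1}}\bigr)\mu_k^{-1}\|x^{k+1}-x^k\|^2$, valid for every $k$ by \eqref{k1}, \eqref{cas21}, \eqref{cas22} and \eqref{dk} (so it never needs item (i)), whereas you invoke (i) to reduce to the tail where \eqref{cas21} applies and bound the finite head separately --- both are valid.
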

\begin{proof}
$\mbox{(i).~}$ Recalling $\mathcal{K}=\{k: I(x^k)\neq I(x^{k+1})\},$ then, we only need to show that set $\mathcal{K}$ has at most finite elements. We argue it by contradiction and suppose there are infinite elements in $\mathcal{K}$. This, together with \eqref{k1} and (ii) of Lemma \ref{lem:hnin}, we have
\begin{equation}\label{k1c1}
\begin{split}
0 &\leq \sum_{k\in \mathcal{K}} \frac{1}{8}(L - L_{\tilde{f}})\mu_{k}^{-1} \|x^{k+1}-x^{k}\|^2 \\
&\leq \sum_{k=1}^{\infty} \left[H(x^{k},\mu_{k},\tau_{k}) - H(x^{k+1},\mu_{k+1},\tau_{k+1})\right] =H(x^{1},\mu_{1},\tau_{1}) -H_{\infty} < \infty.
\end{split}
\end{equation}
On the basis of Lemma \ref{lem:hcon}-(ii), we obtain
\begin{equation*}
\begin{split}
\sum_{k\in \mathcal{K}} \frac{1}{8}(L - L_{\tilde{f}})\mu_{k}^{-1} \|x^{k+1}-x^{k}\|^2 &\geq\sum_{k\in \mathcal{K}} \frac{1}{8}(L - L_{\tilde{f}})\nu =\infty.
\end{split}
\end{equation*}
This leads to a contradiction to \eqref{k1c1}.
Hence, set $\mathcal{K}$ has at most finite elements, we see further that $I(x^k)=\{i:x^k_i=0\}$ changes finite times at most. 

$\mbox{(ii)}.~$
In view of result (i) of this lemma, we know that the SFIHT algorithm will continue to run (3a) in Step 3 after finite iterations. Then, by the update rule of $\mu_k$ in Step 4, the statement in (ii) holds.

$\mbox{(iii).~}$
According to \eqref{k1}, \eqref{cas21} and \eqref{cas22}, and by \eqref{dk}, for $\forall k\in\mathbb{N}$, we find
\begin{equation*}\label{hk2}
H(x^{k+1},\mu_{k+1},\tau_{k+1})- H(x^{k},\mu_{k},\tau_{k})\leq -\gamma\left(1-\beta_{k+1}^2\frac{\mu_{k}}{\mu_{k+1}}\right)\mu_{k}^{-1} \|x^{k+1}-x^{k}\|^2.
\end{equation*}
Summing up the above inequality from $1$ to $\infty$ and by Lemma \ref{lem:hnin}-(ii), we obtain
\begin{equation*}
\begin{split}
0\leq & \sum_{k=1}^{\infty}\gamma\left(1-\beta_{k+1}^2\frac{\mu_{k}}{\mu_{k+1}}\right)\mu_{k}^{-1} \|x^{k+1}-x^{k}\|^2 \\
\leq &\sum_{k=1}^{\infty} \left[H(x^{k},\mu_{k},\tau_{k})- H(x^{k+1},\mu_{k+1},\tau_{k+1})\right] = H(x^{1},\mu_1,\tau_{1})-H_{\infty}<\infty.
\end{split}
\end{equation*}
In view of the definition of $\gamma$ in \eqref{dk}, we get the desired result (iii).

$\mbox{(iv)}.~$
Taking $x=x^{k}$ and $\mu =\mu_{k}$ in \eqref{fka}, and by direct computation, it yields that
\begin{equation*}\label{fx0}
\begin{split}
H(x^{k},\mu_{k},\tau_{k})-2\kappa \mu_{k}
\leq f(x^{k})+\lambda\|x^{k}\|_0+\tau_k\|x^k-x^{k-1}\|^2
\leq H(x^{k},\mu_{k},\tau_{k}).
\end{split}
\end{equation*}
Letting $k$ tend to infinity in the above inequality, along with $\lim_{k\to \infty}\mu_{k}=0$, we get
\begin{equation*}
	\lim_{k\to\infty}f(x^{k})+\lambda\|x^{k}\|_0+\tau_k\|x^k-x^{k-1}\|^2
	=\lim_{k\to \infty}H(x^{k},\mu_{k},\tau_{k}).
\end{equation*}
This, combined with the fact that $\lim_{k\to\infty}\|x^k\|_0$ exists, we deduce the existence of
\begin{equation*}
\lim_{k\to\infty}\left[f(x^{k})+\tau_k\|x^k-x^{k-1}\|^2\right].
\end{equation*}
\end{proof}

Refs. \cite{Lu2014Iterative} and \cite{Wu2020accelerated} also study the IHT algorithm for solving the constrained $\ell_0$ penalized convex regression problem modeled by \eqref{prob1}. In terms of problem, the main difference is that the loss functions studied by them are smooth, while it can be nonsmooth in this paper. In terms of algorithm, \cite{Lu2014Iterative} considers the IHT algorithm, while both \cite{Wu2020accelerated} and this paper focus on the IHT algorithm with extrapolation. It's worth stressing that the extrapolation coefficients in \cite{Wu2020accelerated} need satisfy $\sup_k \beta_k\leq\frac{\sqrt{2}}{2}$, but the SFIHT algorithm proposed in this paper expands the range of extrapolation coefficients in a significant way, which can be seen clearly by the following results on convergence rate.

\begin{remark}
	If $\beta_k \in\left[0, \sqrt{(1-a\mu_{k-1})\frac{\mu_{k}}{\mu_{k-1}}}\right]$ with $a>0$ in Step 1, by Lemma  \ref{lem:ixk}-(iii), we obtain $\sum_{k=1}^\infty \|x^{k+1}-x^k\|^2<\infty$.
	\end{remark}

Combining Lemma \ref{lem:ixk}-(i) and the framework of the SFIHT algorithm, we know that the algorithm only runs (3a) in Step 3 after a finite number of iterations, which means that the SFIHT algorithm solves subproblem \eqref{xbar} only one time and is used to solve a nonsmooth convex optimization after finite itarations. Hence, in order to improve the convergence behavior of the iterates generated by the SFIHT algorithm, we will consider two different choices of $\beta_k$ in Step 1 of the SFIHT algorithm. 

\subsubsection{A sufficient condition for the convergence of $\{x^k\}$}
In this subsubsection, we will analyze the convergence of the iterates generated by the SFIHT algorithm for solving \eqref{prob1} when
\begin{equation}\label{sig1}
\sigma\in\left[\frac{1}{2}, 1\right]
\end{equation}
and
\begin{equation}\label{be1}
 \beta_k\in\left[0, \sqrt{\left(1-\frac{1}{2k^{1-\sigma}}\right)\frac{\mu_k}{\mu_{k-1}}}\ \right]
\end{equation}
in Step 1.
It's easy to find that $\beta_k<\sqrt{\frac{\mu_{k}}{\mu_{k-1}}}$, hence, the previous results are still hold in this case. Based on these results, we first give the following estimations.

\begin{lemma}\label{lem:keyl}
	When $\sigma$ and $\beta_k$ in Step 1 are chosen as in \eqref{sig1} and \eqref{be1},
	it holds that
	$$\sum_{k=1}^{\infty}\frac{1}{k+1}\mu_k^{-2}\|x^{k+1}-x^{k}\|^2<\infty\quad\mbox{and}\quad \lim_{k\to\infty}\|x^{k+1} - x^k\|=0.$$
	\end{lemma}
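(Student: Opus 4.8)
The plan is to read off both claims from the one-line summability estimate already established in Lemma~\ref{lem:ixk}-(iii), namely
\[
\sum_{k=1}^{\infty}\Bigl[1-\beta_{k+1}^2\tfrac{\mu_{k}}{\mu_{k+1}}\Bigr]\mu_{k}^{-1}\|x^{k+1}-x^{k}\|^2<\infty,
\]
by showing that, under \eqref{sig1} and \eqref{be1}, the summand's weight $\bigl[1-\beta_{k+1}^2\mu_k/\mu_{k+1}\bigr]\mu_k^{-1}$ dominates a constant multiple of the target weight $\frac{1}{k+1}\mu_k^{-2}$. So the whole argument is to convert the weight produced by the energy-function analysis into the weight appearing in the statement, and then use the restriction on $\sigma$ to strengthen the conclusion.

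First I would extract a lower bound on the extrapolation gap. Applying \eqref{be1} with index $k+1$ gives $\beta_{k+1}^2\le\bigl(1-\frac{1}{2(k+1)^{1-\sigma}}\bigr)\frac{\mu_{k+1}}{\mu_k}$, hence $1-\beta_{k+1}^2\frac{\mu_k}{\mu_{k+1}}\ge\frac{1}{2(k+1)^{1-\sigma}}$, a bound valid for every $k$ regardless of the precise value of $\mu_k$. Next, using the update rule $\mu_{k+1}=\mu_0/(k+2)^{\sigma}$ in Step~4, so that $\mu_k=\mu_0(k+1)^{-\sigma}$ for all $k\ge2$ and therefore $\mu_k^{-1}=\mu_0^{-1}(k+1)^{\sigma}$, I would combine the two to obtain, for $k\ge2$,
\[
\Bigl[1-\beta_{k+1}^2\tfrac{\mu_k}{\mu_{k+1}}\Bigr]\mu_k^{-1}\ \ge\ \frac{(k+1)^{\sigma}}{2\mu_0\,(k+1)^{1-\sigma}}\ =\ \frac{(k+1)^{2\sigma-1}}{2\mu_0}\ =\ \frac{\mu_0}{2}\cdot\frac{1}{k+1}\mu_k^{-2}.
\]
Substituting this into Lemma~\ref{lem:ixk}-(iii) yields $\frac{\mu_0}{2}\sum_{k\ge2}\frac{1}{k+1}\mu_k^{-2}\|x^{k+1}-x^k\|^2<\infty$, and since the missing $k=1$ term is a single finite quantity, the first assertion $\sum_{k=1}^{\infty}\frac{1}{k+1}\mu_k^{-2}\|x^{k+1}-x^k\|^2<\infty$ follows.

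For the limit I would exploit $\sigma\in[\tfrac12,1]$: because $2\sigma-1\ge0$, the target weight satisfies $\frac{1}{k+1}\mu_k^{-2}=\mu_0^{-2}(k+1)^{2\sigma-1}\ge\mu_0^{-2}$ for $k\ge2$, so $\sum_k\|x^{k+1}-x^k\|^2\le\mu_0^2\sum_k\frac{1}{k+1}\mu_k^{-2}\|x^{k+1}-x^k\|^2<\infty$; summability of the squared differences forces $\|x^{k+1}-x^k\|\to0$. The computation is elementary, so there is no genuinely hard step; the only real content — and the point to present carefully — is the exponent bookkeeping. The coefficient \eqref{be1} is tuned precisely so that the factor $(k+1)^{1-\sigma}$ it subtracts cancels against $\mu_k^{-1}\sim(k+1)^{\sigma}$ to leave exactly the weight $(k+1)^{2\sigma-1}$, and the hypothesis $\sigma\ge\tfrac12$ is exactly what keeps this exponent nonnegative, which is what lets the weighted sum control the plain sum of squares. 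I would also flag that $\mu_k$ has the clean form $\mu_0(k+1)^{-\sigma}$ only from $k=2$ onward (since $\mu_1=\mu_0$), but as both conclusions concern a tail sum and a limit, the initial term is harmless.
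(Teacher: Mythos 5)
Your proof follows essentially the same route as the paper's: both feed the weight bound coming from \eqref{be1} into the summability estimate of Lemma \ref{lem:ixk}-(iii), use $\mu_k=\mu_0(k+1)^{-\sigma}$ to turn that weight into $(k+1)^{2\sigma-1}$, and invoke $\sigma\ge\tfrac12$ to control $\sum_k\|x^{k+1}-x^k\|^2$ and hence the limit. The one step you gloss over is the claim that $\beta_{k+1}^2\le\bigl(1-\frac{1}{2(k+1)^{1-\sigma}}\bigr)\frac{\mu_{k+1}}{\mu_k}$ is ``valid for every $k$'': the hypothesis \eqref{be1} constrains only the Step 1 choice, and when the algorithm branches into (3b) or (3b-2) the output $\beta_{k+1}$ is re-chosen from a different range, so \eqref{be1} does not apply to it verbatim. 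The paper closes this point by citing Lemma \ref{lem:ixk}-(i): the support set changes only finitely often, so the algorithm eventually runs only (3a), and the Step 1 coefficient is the output one for all $k\ge K$; finitely many exceptional terms affect neither conclusion. Alternatively, your assertion can be justified directly: in branches (3b) and (3b-2) one has $\beta_{k+1}^2\le\frac{1}{4}\frac{\mu_{k+1}}{\mu_k}$ (since $\frac{L-L_{\tilde{f}}}{4L}<\frac{1}{4}$ and $\frac{L-L_{\tilde{f}}}{8L-4L_{\tilde{f}}}<\frac{1}{4}$), while $1-\frac{1}{2(k+1)^{1-\sigma}}\ge\frac{1}{2}$ for $\sigma\le 1$, so the required inequality holds in every branch. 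Either patch is one line, so the gap is minor; the rest of your exponent bookkeeping, including the care taken with $\mu_1=\mu_0$, is correct and matches the paper's computation.
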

\begin{proof}
 From Lemma \ref{lem:ixk}-(i) and \eqref{be1}, there exists $K\in\mathbb{N}$, such that
 $$\beta_{k+1}^2\leq \left(1-\frac{1}{2(k+1)^{1-\sigma}}\right)\frac{\mu_{k+1}}{\mu_{k}},\quad \forall k\geq K.$$
This together with Lemma \ref{lem:ixk}-(iii), we obtain
$	\sum_{k=1}^{\infty}\frac{1}{2(k+1)^{1-\sigma}}
	\mu_{k}^{-1}\|x^{k+1}-x^{k}\|^2<\infty.$
According to $\mu_k=\frac{\mu_0}{(k+1)^\sigma}$, we see that
\begin{equation*}
\sum_{k=1}^\infty \frac{1}{k+1}\mu_k^{-2}\|x^{k+1}-x^k\|^2<\infty\quad\mbox{and}\quad \sum_{k=1}^{\infty}(k+1)^{2\sigma-1}\|x^{k+1}-x^{k}\|^2<\infty.
\end{equation*}
Thanks to $\sigma \in [\frac{1}{2}, 1]$, we get
$
	\sum_{k=1}^{\infty}\|x^{k+1}-x^k\|^2<\infty.
$
Then, we have got all the estimates in this lemma.	
\end{proof}

Next, we give another preliminary result.

\begin{proposition}\label{pro:prop1}
	Let $\{a_k\}$ and $\{b_k\}$ be nonnegative sequences with $\sum_{k=1}^{\infty}a_kb_k<\infty$. If $\{a_k\}$ is a nonincreasing sequence and satisfies $\sum_{k=1}^{\infty}a_k=\infty$, then there exists a subsequence of $\{b_k\}$, denoted by $\{b_{k_i}\}$, satisfying $\lim_{i\to\infty}b_{k_i-1}=0$ and $\lim_{i\to\infty}b_{k_i}=0$.
\end{proposition}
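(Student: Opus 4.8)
The plan is to reduce the two-sided smallness requirement to a single $\liminf$ statement for an auxiliary sequence. I would set $c_k := b_{k-1}+b_k$ for $k\geq 2$; since both summands are nonnegative, it suffices to produce indices along which $c_k\to 0$, because this forces $b_{k_i-1}\to 0$ and $b_{k_i}\to 0$ simultaneously. Thus the target becomes $\liminf_{k\to\infty} c_k = 0$, which I will obtain once I know that $\sum_k a_k c_k<\infty$.

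The crucial step is to control the shifted sum $\sum_k a_k b_{k-1}$, and here the monotonicity of $\{a_k\}$ is exactly what is needed. Because $\{a_k\}$ is nonincreasing we have $a_{k}\leq a_{k-1}$, so reindexing with $j=k-1$ gives $\sum_{k=2}^{\infty} a_k b_{k-1} = \sum_{j=1}^{\infty} a_{j+1} b_j \leq \sum_{j=1}^{\infty} a_j b_j<\infty$, where the last inequality uses $a_{j+1}\leq a_j$ together with the hypothesis $\sum_k a_k b_k<\infty$. Adding this to $\sum_k a_k b_k<\infty$ then yields $\sum_{k=2}^{\infty} a_k c_k \leq \sum_{k=2}^{\infty} a_k b_{k-1} + \sum_{k=2}^{\infty} a_k b_k < \infty$.

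Finally I would argue by contradiction that $\liminf_k c_k = 0$. If instead $\liminf_k c_k = c > 0$, then $c_k \geq c/2$ for all $k$ beyond some index $N$, whence $\sum_{k=N}^{\infty} a_k c_k \geq \tfrac{c}{2}\sum_{k=N}^{\infty} a_k = \infty$ by the divergence hypothesis $\sum_k a_k = \infty$, contradicting the finiteness just established. Hence $\liminf_k c_k = 0$, and selecting a subsequence $\{k_i\}$ with $c_{k_i}\to 0$ delivers $b_{k_i-1}\to 0$ and $b_{k_i}\to 0$, as required.

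The main obstacle is the first move of the middle paragraph: recognizing that the nonincreasing hypothesis on $\{a_k\}$ is precisely what transfers summability from $\sum_k a_k b_k$ to the shifted sum $\sum_k a_k b_{k-1}$. Without monotonicity one cannot compare $a_k b_{k-1}$ with $a_{k-1}b_{k-1}$, and the two-sided conclusion would fail. Once that shifted sum is bounded, the remainder is a routine convergence-versus-divergence contradiction of the same flavor used elsewhere in the paper.
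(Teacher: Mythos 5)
Your proposal is correct and follows essentially the same route as the paper's proof: both bound $\sum_{k\geq 2} a_k(b_{k-1}+b_k)$ by $\sum_{k\geq 2} a_{k-1}b_{k-1}+\sum_{k\geq 2} a_k b_k<\infty$ using the nonincreasing property of $\{a_k\}$, and then invoke the divergence of $\sum_k a_k$ together with nonnegativity to extract the desired subsequence. The only difference is that you spell out the final step (that summability against a divergent nonnegative weight forces $\liminf_k (b_{k-1}+b_k)=0$) via an explicit contradiction, which the paper leaves implicit.
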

\begin{proof}
Since sequence $\{a_k\}$ is nonincreasing, we have
\begin{equation*}
\sum_{k=2}^{\infty}a_{k}(b_{k-1}+b_{k})
\leq\sum_{k=2}^{\infty}a_{k-1}b_{k-1}+\sum_{k=2}^{\infty}a_{k}b_{k}<\infty.
\end{equation*}
This together with $\sum_{k=2}^{\infty}a_{k}=\infty$ implies that there exists a subsequence of $\{b_{k-1}+b_{k}\}$, denoted by$\{b_{k_i-1}+b_{k_i}\}$, such that  $\lim_{i\to\infty}(b_{k_i-1}+b_{k_i})=0$. Due to the nonnegativity of $\{b_k\}$, it follows that $\lim_{i\to\infty}b_{k_i-1}=0$ and $\lim_{i\to\infty}b_{k_i}=0$.
\end{proof}

\begin{theorem}\label{the:theorem1}
	Any accumulation point of sequence $\{x^k\}$ generated by the SFIHT algorithm is a local minimizer of problem \eqref{prob1}.
\end{theorem}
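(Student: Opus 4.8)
The plan is to use the first-order characterization \eqref{cocon}: it suffices to show that any accumulation point $x^*$ satisfies $x^*\in\arg\min\{f(x):x\in\mathcal{X}_{I(x^*)}\}$. By Lemma \ref{lem:ixk}-(i) the support stabilizes, so there are $K\in\mathbb{N}$ and an index set $I^*$ with $I(x^k)=I^*$ for all $k\ge K$, and from $K$ on the algorithm only executes \textbf{(3a)} with $\beta_k$ as in \eqref{be1}. Passing to the limit along any subsequence $x^{n_j}\to x^*$ gives $x^*_j=0$ for $j\in I^*$, hence $I^*\subseteq I(x^*)$ and $x^*\in\mathcal{X}_{I^*}$. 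Since $\mathcal{X}_{I(x^*)}\subseteq\mathcal{X}_{I^*}$ and $x^*\in\mathcal{X}_{I(x^*)}$, it is enough to prove $x^*\in\arg\min\{f(x):x\in\mathcal{X}_{I^*}\}$; as $f$ is convex on the closed convex set $\mathcal{X}_{I^*}$ and level bounded, this reduces to showing $f(x^*)=f^*:=\min\{f(x):x\in\mathcal{X}_{I^*}\}$.

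First I would locate one minimizer through a ``good'' subsequence. Applying Proposition \ref{pro:prop1} with $a_k=1/(k+1)$ and $b_k=\mu_k^{-2}\|x^{k+1}-x^k\|^2$ to the summable series of Lemma \ref{lem:keyl} produces indices $\{k_i\}$ with $\mu_{k_i}^{-1}\|x^{k_i+1}-x^{k_i}\|\to0$ and $\mu_{k_i-1}^{-1}\|x^{k_i}-x^{k_i-1}\|\to0$; boundedness (Lemma \ref{lem:hcon}-(i)) lets me extract a further subsequence with $x^{k_{i_l}}\to\bar{x}$, and then $y^{k_{i_l}}\to\bar{x}$ and $x^{k_{i_l}+1}\to\bar{x}$ because $\beta_k\le1$ and $\mu_{k-1}/\mu_k\to1$. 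Since $k_i\ge K$, inequality \eqref{ftid} holds, so for a fixed but arbitrary $z\in\mathcal{X}_{I^*}$ I set $x=z$ there and pass to the limit: the term $\frac{L}{2\mu_{k}}\big(\|z-y^{k}\|^2-\|x^{k+1}-z\|^2\big)=\frac{L}{2\mu_k}\langle x^{k+1}-y^k,\,2z-y^k-x^{k+1}\rangle$ vanishes because $\mu_k^{-1}\|x^{k+1}-y^k\|\to0$ along $\{k_i\}$ while $2z-y^k-x^{k+1}$ stays bounded, and $\tilde{f}(\cdot,\mu_k)\to f$ by Definition \ref{def:def1}-(ii). This yields $f(\bar{x})\le f(z)$ for every $z\in\mathcal{X}_{I^*}$, i.e. $f(\bar{x})=f^*$.

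The decisive step is to upgrade this from one accumulation point to all of them, and the lever is Lemma \ref{lem:ixk}-(iv), which guarantees that $\lim_{k\to\infty}\big[f(x^k)+\tau_k\|x^k-x^{k-1}\|^2\big]$ exists. Along $\{k_i\}$ one checks from \eqref{tauk} that $\tau_{k_i}\|x^{k_i}-x^{k_i-1}\|^2\to0$ (both $\mu_{k_i-1}^{-1}\|x^{k_i}-x^{k_i-1}\|^2$ and $\mu_{k_i}^{-1}\|x^{k_i}-x^{k_i-1}\|^2$ tend to $0$, since $\mu_{k_i-1}\to0$ and $\mu_{k_i-1}^2/\mu_{k_i}\to0$), so this common limit equals $f(\bar{x})=f^*$. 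Now for an arbitrary accumulation point $x^*$ with $x^{n_j}\to x^*$, continuity of $f$ gives $f(x^{n_j})\to f(x^*)$, so $\tau_{n_j}\|x^{n_j}-x^{n_j-1}\|^2$ converges to $f^*-f(x^*)$; its nonnegativity yields $f(x^*)\le f^*$, while $x^*\in\mathcal{X}_{I^*}$ gives $f(x^*)\ge f^*$, forcing $f(x^*)=f^*$. Hence $x^*\in\arg\min\{f(x):x\in\mathcal{X}_{I^*}\}\subseteq\arg\min\{f(x):x\in\mathcal{X}_{I(x^*)}\}$, and \eqref{cocon} identifies $x^*$ as a local minimizer of \eqref{prob1}. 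The main obstacle is precisely this ``one implies all'' passage: the vanishing gradient-mapping estimate is only available along the Proposition \ref{pro:prop1} subsequence, so it is the convexity of the support-restricted problem together with the convergent energy from Lemma \ref{lem:ixk}-(iv) that makes the conclusion hold for every accumulation point rather than just for $\bar{x}$.
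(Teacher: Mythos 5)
Your proof is correct, and its skeleton coincides with the paper's: support stabilization from Lemma \ref{lem:ixk}-(i), the subsequence produced by Proposition \ref{pro:prop1} applied to the summable series of Lemma \ref{lem:keyl}, identification of one limit point $\bar{x}$ as a minimizer of $f$ on the stabilized support set, and then the existence of $\lim_{k\to\infty}\left[f(x^k)+\tau_k\|x^k-x^{k-1}\|^2\right]$ from Lemma \ref{lem:ixk}-(iv) to transfer minimality to every accumulation point --- this ``one implies all'' step is exactly the paper's inequality \eqref{limf}. The one genuinely different ingredient is how you certify that $\bar{x}$ minimizes $f$ over the support-restricted set. The paper passes to the limit in the first-order optimality condition \eqref{fon} of the subproblem and invokes the gradient-consistency property of Definition \ref{def:def1}-(iv) to extract $\bar{\xi}\in \partial f(\bar{x})$ with $\langle\bar{\xi},x-\bar{x}\rangle\geq 0$ on $\mathcal{X}_I$, and then uses convexity of $f$. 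You instead take the limit in the three-point inequality \eqref{ftid} at a fixed $z\in\mathcal{X}_{I^*}$, where the quadratic terms collapse because $\mu_{k_i}^{-1}\|x^{k_i+1}-y^{k_i}\|\to 0$ along the Proposition \ref{pro:prop1} subsequence; this needs only the pointwise approximation property (Definition \ref{def:def1}-(ii), or \eqref{fka}), so subgradients and property (iv) never enter, which makes your argument slightly more elementary. It also quietly uses \eqref{ftid} exactly on the set $\mathcal{X}_{I(x^{k+1})}$ where its derivation from the separable optimality of $\bar{x}^{k+1}$ is actually valid, whereas the paper states \eqref{nc} for all $x\in\mathcal{X}$ but only ever applies it with $x=x^k$. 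Two cosmetic remarks: your verification that $\tau_{k_i}\|x^{k_i}-x^{k_i-1}\|^2\to 0$ handles both terms of \eqref{tauk}, which is more careful than the paper's one-line claim; and your final displayed chain $x^*\in\arg\min\{f(x):x\in\mathcal{X}_{I^*}\}\subseteq\arg\min\{f(x):x\in\mathcal{X}_{I(x^*)}\}$ is not literally a set inclusion, but the pointwise fact you actually need ($x^*$ lies in $\mathcal{X}_{I(x^*)}\subseteq\mathcal{X}_{I^*}$ and minimizes $f$ on the larger set, hence on the smaller one) is exactly the reduction you justified at the outset, and it makes explicit the step the paper compresses into ``equivalently.''
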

\begin{proof}
From Lemma \ref{lem:ixk}-(i), we know that there exist some $K\in \mathbb{N}$ and $I\subseteq\{1,2,\ldots,n\}$ such that $I(x^k)=I$ for all $k\geq K$. This, combined with the SFIHT algorithm, we have
\begin{equation}\label{xn0}
x^{k+1}=\arg\min_{x\in \mathcal{X}_I}\left\{\tilde{f}(y^k,\mu_k) + \langle \nabla \tilde{f}(y^k,\mu_k),x-y^k \rangle + \frac{L}{2\mu_k} \|x-y^k\|^2\right\}, \quad \forall k\geq K.
\end{equation}
By Lemma \ref{lem:keyl}, we know
\begin{equation*}
\sum_{k=1}^\infty\frac{1}{k+1}\left[\mu_k^{-2}\|x^{k+1}-x^{k}\|^2\right]<\infty.
\end{equation*}
Using the above result, Proposition \ref{pro:prop1} with $a_k=\frac{1}{k+1}$ and $b_k=\mu_k^{-2}\|x^{k+1}-x^{k}\|^2$, there exists a subsequence of $\{\mu_k^{-2}\|x^{k+1}-x^{k}\|^2\}$ satisfying
\begin{equation}\label{muxk}
\lim_{i\to\infty}\mu_{k_i-1}^{-2}\|x^{k_i}-x^{k_i-1}\|^2=0\quad \mbox{and}\quad \lim_{i\to\infty}\mu_{k_i}^{-2}\|x^{k_i+1}-x^{k_i}\|^2=0.
\end{equation}
We see further that $\lim_{i\to\infty}\mu_{k_i-1}^{-1}(x^{k_i}-x^{k_i-1})=0$ and $\lim_{i\to\infty}\mu_{k_i}^{-1}(x^{k_i+1}-x^{k_i})=0,$
which implies
\begin{equation}\label{muk0}
\lim_{i\to\infty}\frac{x^{k_i+1}-y^{k_i}}{\mu_{k_i}}=\lim_{i\to\infty}\left[\frac{x^{k_i+1}-x^{k_i}}{\mu_{k_i}}
-\beta_{k_i}\frac{\mu_{k_i-1}}{\mu_{k_i}}\frac{x^{k_i}-x^{k_i-1}}{\mu_{k_i-1}}\right]=0.
\end{equation}
From Lemma \ref{lem:hcon}-(i) and Lemma \ref{lem:ixk}-(i), we know that there exists a subsequence of $\{x^{k_i}\}$ (also denoted by $\{x^{k_i}\}$ for simplicity) and $\bar{x}\in \mathcal{X}_I$ such that $\lim_{i\to \infty} x^{k_i}=\bar{x}$.
By the triangle inequality, it holds that
\begin{equation*}
\|x^{k_i+1}-\bar{x}\| \leq \|x^{k_i+1}-x^{k_i}\|+\|x^{k_i}-\bar{x}\|,
\end{equation*}
then we immediately deduce that $\lim_{i\to \infty} x^{k_i+1}=\bar{x}$ by $\lim_{i\to\infty}\|x^{k+1}-x^{k}\|= 0$ proved in Lemma \ref{lem:keyl}. According to the definitions of $y^{k_i}$ and $\beta_{k_i}$, we can also obtain $\lim_{i\to \infty}y^{k_i}=\bar{x}$, and by Definition
\ref{def:def1}-(iv), we know that
\begin{equation}\label{par}
\left\{\lim_{i\to\infty} \nabla \tilde{f}(y^{k_i},\mu_{k_i})\right\}\subseteq \partial f(\bar{x}).
\end{equation}

For any $k_i\geq K$, recalling the definition of $x^{k_i+1}$ in \eqref{xn0}, we have
\begin{equation}\label{fon}
\left\langle \nabla \tilde{f}(y^{k_i},\mu_{k_i})+L \mu_{k_i}^{-1}(x^{k_i+1}-y^{k_i}), x- x^{k_i+1}\right\rangle \geq 0,\quad \forall x \in \mathcal{X}_I,\ k_i\geq K.
\end{equation}
Letting $i\to \infty$ in \eqref{fon}, by \eqref{muk0}, \eqref{par} and $\lim_{i\to \infty} x^{k_i+1}=\bar{x}$, we notice that there exists $\bar{\xi}\in \partial f (\bar{x})$ satisfying
\begin{equation}\label{opt}
\left\langle \bar{\xi}, x-\bar{x} \right\rangle \geq 0, \quad \forall x \in \mathcal{X}_I.
\end{equation}
Since $f$ is a convex function and $\mathcal{X}_I$ is a nonempty closed convex set, \eqref{opt} implies that $\bar{x}$ is a global minimizer of $f$ on $\mathcal{X}_I$. By \eqref{muxk} and $\lim_{k\to\infty}\mu_k=0$, we have
$\lim_{i\to\infty}\mu_{k_i-1}^{-1}\|x^{k_i}-x^{k_i-1}\|^2=0$, which together with the choice of $\tau_k$ in \eqref{tauk}, we deduce
$$
\lim_{i\to\infty}\left[f(x^{k_i})+\tau_{k_i}\|x^{k_i}-x^{k_i-1}\|^2\right]
=\lim_{i\to\infty}f(x^{k_i}) =f(\bar{x}).
$$
This, combined with Lemma \ref{lem:ixk}-(iv), we have $$\lim_{k\to\infty}\left[f(x^{k})+\tau_{k}\|x^{k}-x^{k-1}\|^2\right]=f(\bar{x}).$$
Assume $\hat{x}$ is an accumulation point of $\{x^k\}$ with the convergence of subsequence $\{x^{l_j}\}$. Then, it holds that
\begin{equation}\label{limf}
f(\hat{x})=\lim_{j\to\infty}f(x^{l_j})
\leq \lim_{j\to\infty}\left[f(x^{l_j})+\tau_{l_j}\|x^{l_j}-x^{l_j-1}\|^2\right]=f(\bar{x}).
\end{equation}
Since $\hat{x}\in\mathcal{X}_I$ and $\bar{x}\in\arg\min_{x\in\mathcal{X}_I} f(x)$, by \eqref{limf}, we obtain $\hat{x}\in\arg\min_{x\in\mathcal{X}_I} f(x)$.
Hence, any accumulation point of $\{x^k\}$ is a global minimizer of $f$ on $\mathcal{X}_I$. Equivalently, any accumulation point of $\{x^k\}$ is a local minimizer of problem \eqref{prob1}.
\end{proof}

Combining the construction and convergence analysis of the SFIHT algorithm, we know that the extrapolation coefficients can be considered into two cases for simplicity, i.e.,
$$
	\left\{
\begin{aligned}
&\beta_k \in \left[0,\sqrt{\left(1-\frac{1}{2k^{1-\sigma}}\right)\frac{\mu_k}{\mu_{k-1}}}\ \right]
& &\mbox{if}~ I(x^{k-1})=I(x^k)=I(\bar{x}^{k+1}),& \\
& \beta_k \in \left[0,\sqrt{\frac{L-L_{\tilde{f}}}{8L-4L_{\tilde{f}}}\frac{\mu_k}{\mu_{k-1}}}\ \right]& &\mbox{otherwise}.&
\end{aligned}\right.
$$
for which the theoretical results in this paper are still valid. It's well known that the condition of the extrapolation parameters is weaker is better and $\sup_k\beta_k=1$ is the most important. Thus, we divide the extrapolation coefficients into three cases, mainly to expand the range of extrapolation coefficients so as to greatly improve the convergence performance of the SFIHT algorithm. Though this behavior will increase the computation of the algorithm, it is surprising by Lemma \ref{lem:ixk}-(i) that the computational increase is finite.

\subsubsection{Convergence rate on objection function values}
In this subsubsection, we discuss a specific $\beta_k$ in Step 1 of the SFIHT algorithm. It not only keeps the validity for finding the local minimizers of problem \eqref{prob1}, but also obtains the convergence rate of the loss and objective function values. Based on the FISTA algorithm in \cite{Beck2009A} and the smoothing method, Bian in \cite{Bian2020smoothing} proposed a smoothing fast iterative shrinkage thresholding algorithm ($\mathrm{FISTA}_S$) for solving the constrained nonsmooth convex optimization problem. We set extrapolation coefficient $\beta_k$ in Step 1 to be the same as that in \cite{Bian2020smoothing}, namely, the following recurrence relation:
\begin{equation}\label{lossbeta}
	\left\{
	\begin{aligned}
		&t_{k}=\frac{1+\sqrt{1+4\left(\frac{\mu_{k-1}}{\mu_{k}}\right)t_{k-1}^2}}{2},& \\
		&\beta_{k}=\frac{t_{k-1}-1}{t_{k}},&
	\end{aligned}\right.
\end{equation}
with $t_0=1$. Then, $\beta_k< \sqrt{\frac{\mu_k}{\mu_{k-1}}}$, which satisfies the condition in Step 1. Since the SFIHT algorithm always runs (3a) in Step 3 after finite iterations, by \cite[Theorem 1]{Bian2020smoothing}, we can directly get the following corollary.
\begin{corollary}\label{cor:cor1}
	Let $\{x^k\}$ be the sequence generated by the SFIHT algorithm, in which the $\beta_k$ in Step 1 is defined by \eqref{lossbeta}. Then, the following statements hold:
	\begin{enumerate}[{\rm (i)}]
		\item any accumulation point of $\{x^k\}$ is a local minimizer of problem \eqref{prob1};
		\item $\lim_{k\to\infty}F(x^k)=F_{\infty}<\infty$ exists and
		\begin{equation*}
			F(x^{k+1})- F_\infty =\left\{
			\begin{aligned}
				&O(k^{-\sigma})& & \mbox{if}~ 0<\sigma< 1,& \\
				&O(k^{-1}\ln k)& & \mbox{if}~ \sigma= 1,& \\
				&O(k^{\sigma-2})& & \mbox{if}~ 1<\sigma< 2.& \\
			\end{aligned}\right.
		\end{equation*}
	\end{enumerate}
\end{corollary}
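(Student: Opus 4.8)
The plan is to exploit the support-identification property from Lemma \ref{lem:ixk}-(i): there exist $K\in\mathbb{N}$ and an index set $I\subseteq\{1,2,\ldots,n\}$ with $I(x^k)=I$ for every $k\geq K$. Consequently, past the $K$-th step the SFIHT algorithm executes only branch (3a) of Step 3, solving subproblem \eqref{xbar} exactly once per iteration with $\beta_k$ taken from Step 1 through the recurrence \eqref{lossbeta} (which satisfies $\beta_k<\sqrt{\mu_k/\mu_{k-1}}$ and hence keeps branch (3a) active). The heart of the proof is to recognise that this tail of the iteration is precisely the $\mathrm{FISTA}_S$ scheme of \cite{Bian2020smoothing} applied to the constrained nonsmooth convex program $\min\{f(x):x\in\mathcal{X}_I\}$, after which both assertions are read off from \cite[Theorem 1]{Bian2020smoothing}.

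To make this reduction rigorous I would first note that on the frozen support the penalty is constant: every $x\in\mathcal{X}_I$ has $\|x\|_0=n-|I|$, so on $\mathcal{X}_I$ one has $\tilde{F}(\cdot,\mu)=\tilde{f}(\cdot,\mu)+\lambda(n-|I|)$, and minimising $Q(\cdot,y^k,\mu_k)$ over $\mathcal{X}$ as in \eqref{sub} collapses to a projected gradient step for $\tilde{f}(\cdot,\mu_k)$ over $\mathcal{X}_I$. Concretely, using that $\mathcal{X}$ is a box and $l\leq 0\leq u$, the closed form \eqref{solu} gives, for $k\geq K$, $\bar{x}^{k+1}=P_{\mathcal{X}_I}\bigl(S_L(y^k,\mu_k)\bigr)$, because the projection onto the box $\mathcal{X}_I$ is separable, coinciding with $P_{\mathcal{X}}$ on coordinates outside $I$ and with $0$ on $I$. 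I would then verify that $\tilde{f}(\cdot,\mu)$ restricted to $\mathcal{X}_I$ remains an admissible smoothing function of $f$ in the sense of Definition \ref{def:def1}, and that the coefficient recurrence \eqref{lossbeta} together with the schedule $\mu_k=\mu_0/(k+1)^{\sigma}$ from Step 4 are exactly those required by $\mathrm{FISTA}_S$. This equivalence, including the confirmation of the external theorem's hypotheses, is the step I expect to be the main obstacle.

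Granting the reduction, part (i) is immediate: \cite[Theorem 1]{Bian2020smoothing} (analogously to the limit-point argument already carried out in Theorem \ref{the:theorem1}) shows that every accumulation point $\bar{x}$ of $\{x^k\}$ is a global minimizer of $f$ on $\mathcal{X}_I$. Since $\bar{x}\in\mathcal{X}_I$ forces $I\subseteq I(\bar{x})$ and hence $\mathcal{X}_{I(\bar{x})}\subseteq\mathcal{X}_I$, it follows that $\bar{x}\in\arg\min\{f(x):x\in\mathcal{X}_{I(\bar{x})}\}$, which by the characterisation \eqref{cocon} means exactly that $\bar{x}$ is a local minimizer of \eqref{prob1}.

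For part (ii), the restricted problem is convex, so all its minimizers share a common optimal value $f^{\ast}:=\min\{f(x):x\in\mathcal{X}_I\}$, and \cite[Theorem 1]{Bian2020smoothing} yields $f(x^k)\to f^{\ast}$ together with the stated decay of $f(x^{k+1})-f^{\ast}$ in the three regimes $0<\sigma<1$, $\sigma=1$, $1<\sigma<2$. Because $\|x^k\|_0=n-|I|$ is constant for $k\geq K$, I obtain $F(x^k)=f(x^k)+\lambda(n-|I|)$, so $\lim_{k\to\infty}F(x^k)=F_{\infty}:=f^{\ast}+\lambda(n-|I|)$ exists and $F(x^{k+1})-F_{\infty}=f(x^{k+1})-f^{\ast}$, whence the rates transfer verbatim. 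Should the cited theorem instead be phrased through the smoothed values, the only additional bookkeeping is to absorb the smoothing error, which by \eqref{fka} is $O(\kappa\mu_k)=O(k^{-\sigma})$; this term is comparable to the announced rate for $0<\sigma\leq 1$ and dominated by $k^{\sigma-2}$ for $1<\sigma<2$ (since $-\sigma<\sigma-2$ is equivalent to $\sigma>1$), so the stated convergence rates remain unaffected.
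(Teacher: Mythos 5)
Your proposal takes essentially the same route as the paper: the paper's own proof is precisely the observation that, by Lemma \ref{lem:ixk}-(i), the algorithm eventually runs only branch (3a), so its tail coincides with the $\mathrm{FISTA}_S$ scheme of \cite{Bian2020smoothing} applied to $\min\{f(x):x\in\mathcal{X}_I\}$ on the frozen support, after which both claims are read off from \cite[Theorem 1]{Bian2020smoothing}. Your write-up simply supplies the bookkeeping the paper leaves implicit (collapse of the subproblem to a projected gradient step over $\mathcal{X}_I$, constancy of $\|x^k\|_0$, transfer of the rates from $f$ to $F$, absorption of the $O(\mu_k)$ smoothing error); the one small slip --- claiming $\|x\|_0=n-|I|$ for \emph{every} $x\in\mathcal{X}_I$ rather than only for the iterates, whose support equals $I$ exactly --- is immaterial to the argument.
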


Since the $\ell_0$ norms of the iterates change only a finite number of times,  the above convergence rate also holds for loss function $f$.
From the estimation in Corollary \ref{cor:cor1}, we can see that the convergence rate on the objective function values generated by the SFIHT algorithm greatly improves the rate given by the algorithm in \cite{Bian2020a}.
\section{A case on smooth loss function}
\label{sec:3}
In this section, we discuss the case that loss function $f$ in problem \eqref{prob1} is Lipschitz continuously differentiable. In summary, throughout this section, we require $f$ in \eqref{prob1} to satisfy the following assumptions
\begin{equation*}
\left\{
\begin{aligned}
&\bullet~  f~ \mbox{is a smooth convex function on}~ \mathcal{X};\\
&\bullet~  f~ \mbox{is level bounded on}~ \mathcal{X};\\
&\bullet~ \nabla f~ \mbox{is Lipschitz continuous with Lipschitz constant}~ L_f.  \\
\end{aligned}\right.
\end{equation*}
In the context of this problem, we select a special extrapolation coefficient $\beta_k$ in the Step 1 of the SFIHT algorithm. Since the smoothing method is not needed in this case, we call it fast iterative hard thresholding (FIHT) algorithm. Please see Algorithm \ref{alg:alg2}. For the sake of brevity, we also define an approximation of $F$ at a given point $y$ as follows:
$$Q(x,y):=f(y) + \langle \nabla f(y),x-y \rangle + \frac{L}{2} \|x-y\|^2+\lambda\|x\|_0,$$
where $L>L_f$.
\begin{algorithm}
	\caption{Fast Iterative Hard Thresholding (FIHT) algorithm}
	\label{alg:alg2}
	\begin{algorithmic}
		\STATE{\textbf{Initialization:} Take $x^{1}=x^0\in \mathcal{X}$, $\alpha >0$ and $L>L_f$. Set $k=1$.}
		
		\WHILE{a termination criterion is not met,}
		
		\STATE{\begin{enumerate}
				\item [\textbf{Step 1.}] Take $\beta_k=\frac{k-1}{k+\alpha-1}$.
				
				\item [\textbf{Step 2.}] Compute
				\begin{align*}
				& y^k=x^k+\beta_k(x^k-x^{k-1}),   \\
				& \bar{x}^{k+1}\in \arg\min\{Q(x,y^k):x\in \mathcal{X}\}.
				\end{align*}
				
				\item [\textbf{Step 3.}] \textbf{(3a)} If $I(x^{k-1})=I(x^k)=I(\bar{x}^{k+1})$, let $$x^{k+1}=\bar{x}^{k+1},$$ increment $k$ by one and return to \textbf{Step 1}.
				
				\textbf{(3b)} Otherwise, choose $\beta_k \in \left[0,\sqrt{\frac{L-L_f}{4L}}\ \right],$ compute \textbf{Step 2} to obtain $\bar{x}^{k+1}$.
				
				\quad\quad\textbf{(3b-1)} If $I(x^k)=I(\bar{x}^{k+1})$, let $$x^{k+1}=\bar{x}^{k+1},$$\quad\quad increment $k$ by one and return to \textbf{Step 1}.
				
				\quad\quad\textbf{(3b-2)} Otherwise, choose $\beta_k \in \left[0,\sqrt{\frac{L-L_f}{8L-4L_f}}\ \right],$ compute \textbf{Step 2} to\\ \quad\quad obtain $\bar{x}^{k+1}$ and let $x^{k+1}=\bar{x}^{k+1}$.
				
			    Increment $k$ by one and return to \textbf{Step 1}.
			\end{enumerate}
		}
		\ENDWHILE
		
		\textbf{Output}\quad $x^k$, $\mu_k$ and $\beta_k$.
	\end{algorithmic}
\end{algorithm}

Let $\{x^k\}$ and $\{y^k\}$ be the iterates generated by the FIHT algorithm. Similarly, the subproblem in Step 2 of FIHT algorithm has a closed-form solution. Hence, the FIHT algorithm is also well-defined. The following lemma shows a lower bound property of $\{x^k\}$, which can be easily obtained by \cite[Lemma 3.2]{Wu2020accelerated}, so we omit its proof here.

\begin{lemma}\label{lem:lowb}
	The following statements hold.
	\begin{enumerate}[{\rm (i)}]
		\item When $x_j^{k}\neq0$ for some $j\in\{1,2,\ldots,n\}$, it holds that
		\begin{equation}\label{sig}
		|x_j^{k}|\geq \delta:=\min_{i=1,2,\ldots,n}\delta_i >0,
		\end{equation}
		where
		\begin{equation*}
		\delta_i= \left\{
		\begin{aligned}
		&\min \left(u_i,\sqrt{{2\lambda}/{L}}\right)& &\mbox{if}~ l_i=0,& \\
		&\min \left(-l_i,\sqrt{{2\lambda}/{L}}\right)& &\mbox{if}~ u_i=0,& \\
		&\min \left(-l_i,u_i,\sqrt{{2\lambda}/{L}}\right)& &\mbox{otherwise.}& \\
		\end{aligned}\right.
		\end{equation*}
		
		\item For every $k\in\mathbb{N}$, if $I(x^k)\neq I(x^{k+1})$, then
		$\|x^{k+1}-x^k\|\geq\delta.$
	\end{enumerate}
\end{lemma}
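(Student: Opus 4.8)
The plan is to read both bounds directly off the closed-form solution of the subproblem, exactly as in the smoothing case. Because the loss is now smooth, the subproblem $\min_{x\in\mathcal X}Q(x,y)$ has the same separable structure as \eqref{sub}, so its minimizer is given by the analogue of \eqref{solu} with $\mu=1$ and $\tilde f$ replaced by $f$. Concretely, abbreviating $S:=y^{k-1}-\frac{1}{L}\nabla f(y^{k-1})$ and $q:=P_{\mathcal X}(S)-S$, the generated iterate $x^{k}=\bar x^{k}$ (for $k\ge 2$) satisfies, coordinatewise, $x_i^{k}=[P_{\mathcal X}(S)]_i$ whenever $[S]_i^2-[q]_i^2\ge 2\lambda/L$ and $x_i^{k}=0$ whenever $[S]_i^2-[q]_i^2<2\lambda/L$. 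Thus a surviving (nonzero) coordinate must pass the thresholding test, and it suffices to bound its magnitude.

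For part (i), I fix $j$ with $x_j^{k}\ne 0$, so $x_j^{k}=[P_{\mathcal X}(S)]_j=\max\{l_j,\min\{u_j,[S]_j\}\}$, and distinguish the three regimes of the box projection. If the value is clamped at the lower bound, $x_j^{k}=l_j$, then $x_j^{k}\ne 0$ forces $l_j<0$ and $|x_j^{k}|=-l_j\ge\delta_j$ by the definition of $\delta_j$; symmetrically, clamping at the upper bound gives $x_j^{k}=u_j>0$ and $|x_j^{k}|=u_j\ge\delta_j$. On the interior regime one has $[P_{\mathcal X}(S)]_j=[S]_j$, hence $[q]_j=0$, so the thresholding test collapses to $[S]_j^2\ge 2\lambda/L$ and yields $|x_j^{k}|=|[S]_j|\ge\sqrt{2\lambda/L}\ge\delta_j$. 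In all cases $|x_j^{k}|\ge\delta_j\ge\delta=\min_i\delta_i$, and $\delta>0$ because, under $l<u$, each $\delta_i$ is the minimum of strictly positive quantities; this is \eqref{sig}.

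Part (ii) then follows immediately. If $I(x^{k})\ne I(x^{k+1})$, there is an index $j$ in exactly one of the two zero-sets, i.e.\ either $x_j^{k}=0,\ x_j^{k+1}\ne 0$ or $x_j^{k}\ne 0,\ x_j^{k+1}=0$. Since the component that is nonzero has magnitude at least $\delta$ by part (i), and one of the two is zero, we get
\[
\|x^{k+1}-x^{k}\|\ \ge\ |x_j^{k+1}-x_j^{k}|\ =\ \max\{|x_j^{k}|,|x_j^{k+1}|\}\ \ge\ \delta .
\]

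The only step that requires genuine care is the interior regime in part (i): one must notice that $[q]_j$ vanishes exactly there, which is precisely what turns the inequality $[S]_j^2-[q]_j^2\ge 2\lambda/L$ into the clean magnitude bound $|[S]_j|\ge\sqrt{2\lambda/L}$; the two clamped regimes are immediate from the signs of $l_j,u_j$. A minor bookkeeping caveat is that the closed-form characterization applies to iterates produced by the subproblem ($k\ge 2$), so the initial point $x^{0}=x^{1}$ should be taken to satisfy \eqref{sig} or handled separately---this is harmless for the later support-change analysis. Since the whole argument mirrors \cite[Lemma 3.2]{Wu2020accelerated}, I would present it in this compact form.
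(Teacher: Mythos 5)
Your proof is correct and is, in substance, the same argument the paper relies on: the paper omits the proof entirely, deferring to \cite[Lemma 3.2]{Wu2020accelerated}, and that result is obtained exactly as you do it---reading the bound off the closed-form hard-thresholding solution (the analogue of \eqref{solu} with $\mu=1$ and $f$ in place of $\tilde f$), treating the two clamped regimes of the box projection and the interior regime where $[q]_j=0$ turns the threshold test into $|[S]_j|\geq\sqrt{2\lambda/L}$, and then deducing (ii) from (i) through a coordinate in the symmetric difference of the supports. The initialization caveat you flag (that $x^1=x^0$ is not produced by the subproblem, so \eqref{sig} formally holds only for generated iterates) is genuine but harmless, and your handling of it is appropriate.
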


We begin the convergence analysis of the FIHT algorithm by defining the following important auxiliary sequence
\begin{equation*}
W(x^k,\zeta_k):=F(x^k)+\zeta_k \|x^k-x^{k-1}\|^2,
\end{equation*}
where $\zeta_k >0$. Again, for all $k\in\mathbb{N}$, we give a choice of $\zeta_k$ as follows,
\begin{equation}\label{zeta}
\zeta_k=\left\{
\begin{aligned}
&\frac{L}{4}(1+\beta_k^2)& &\mbox{if}~I(x^{k-1})=I(x^k)=I(x^{k+1}),& \\
&\frac{L-L_{f}}{8}& &otherwise.&
\end{aligned}\right.
\end{equation}
By a similar analysis to sequence $H(x^k,\mu_{k},\tau_k)$ in Section \ref{sec:2}, we can get some basic results on the sequence $W(x^k,\zeta_k)$ for the FIHT algorithm. For easy of reading, we only list them in the following lemma, but omit their proofs.
\begin{lemma}\label{lem:wk}
	The following properties are satisfied:
	\begin{enumerate}[{\rm (i)}]
		\item for every $k\in\mathbb{N}$, $x^k\in \mathcal{X}$;
		
		\item when $I(x^{k})\neq I(x^{k+1})$, we have
		\begin{equation*}
		\begin{split}
		& W(x^{k+1},\zeta_{k+1})- W(x^k,\zeta_k) \\
		& \leq\left[\zeta_{k+1}- \frac{L-L_f}{4}\right] \|x^{k+1}-x^{k}\|^2 + \left[\frac{2L - L_f}{2}\beta_k^2-\zeta_{k} \right]\|x^{k}-x^{k-1}\|^2;
		\end{split}
		\end{equation*}
		
		\item when $I(x^k)=I(x^{k+1})$, we have
		\begin{equation*}
		\begin{split}
		&W(x^{k+1},\zeta_{k+1})- W(x^k,\zeta_k) \\
		&\leq  \left[\zeta_{k+1}-\frac{L}{2}\right] \|x^{k+1}-x^k\|^2
		+ \left[\frac{L}{2}\beta_k^2-\zeta_{k}\right]\|x^{k}-x^{k-1}\|^2;
		\end{split}
		\end{equation*}
	\item when $I(x^k)=I(x^{k+1})$, $\frac{L}{2}\beta_k^2\leq \zeta_k$; otherwise, $\frac{2L-L_f}{2}\beta_k^2\leq \zeta_k$;
	
	\item $\{W(x^{k},\zeta_{k})\}$ is nonincreasing and $\lim_{k\to\infty}W(x^{k},\zeta_{k})=W_\infty<\infty$ exists.
	\end{enumerate}
\end{lemma}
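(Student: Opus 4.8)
The plan is to reuse verbatim the machinery already developed for the smoothing energy sequence $H(x^k,\mu_k,\tau_k)$ in Lemmas \ref{lem:hk} and \ref{lem:hnin}, under the obvious dictionary sending the smoothing data to the smooth data: replace $\tilde f(\cdot,\mu_k)$ by $f$, the Lipschitz constant $L_{\tilde f}\mu_k^{-1}$ by $L_f$, the proximal coefficient $L\mu_k^{-1}$ by $L$, and drop every $\kappa\mu_k$ term (there is no smoothing parameter now, so $\tilde F(x^k,\mu_k)+\kappa\mu_k$ collapses to $F(x^k)$ and $\zeta_k$ plays the role of $\tau_k$). Part (i) is then immediate: the subproblem in Step 2 of the FIHT algorithm is separable and has the same closed-form minimizer as in \eqref{solu}, each component of which is either a projection onto $\mathcal{X}$ or the value $0\in\mathcal{X}$, whence $x^{k+1}\in\mathcal{X}$ for every $k$.

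For (ii) I would start from the descent inequality for $f$, namely $f(x^{k+1})\le f(y^k)+\langle\nabla f(y^k),x^{k+1}-y^k\rangle+\tfrac{L_f}{2}\|x^{k+1}-y^k\|^2$, which is the smooth analogue of \eqref{gra}. Adding $\lambda\|x^{k+1}\|_0$, bounding the right side by $Q(x^{k+1},y^k)+\tfrac{L_f-L}{2}\|x^{k+1}-y^k\|^2$, using that $x^{k+1}$ minimizes $Q(\cdot,y^k)$ over $\mathcal{X}$, and invoking the convexity inequality $f(y^k)+\langle\nabla f(y^k),x^k-y^k\rangle\le f(x^k)$, I reach the smooth counterpart of \eqref{ft}, i.e. $F(x^{k+1})\le F(x^k)-\tfrac{L-L_f}{4}\|x^{k+1}-x^k\|^2+\tfrac{2L-L_f}{2}\beta_k^2\|x^k-x^{k-1}\|^2$, after substituting $y^k=x^k+\beta_k(x^k-x^{k-1})$ and applying the same elementary cross-term estimate used there; adding $\zeta_{k+1}\|x^{k+1}-x^k\|^2-\zeta_k\|x^k-x^{k-1}\|^2$ to both sides yields (ii). For (iii) I would instead exploit that, on the support-fixing event $I(x^k)=I(x^{k+1})$, the iterate solves $\arg\min_{x\in\mathcal{X}}\{G(x,y^k)+\lambda\|x^k\|_0\}$ with $G(x,y):=f(y)+\langle\nabla f(y),x-y\rangle+\tfrac{L}{2}\|x-y\|^2$ strongly convex in $x$ with modulus $L$; the first-order optimality condition then produces the sharper bound $F(x^{k+1})\le F(x^k)+\tfrac{L}{2}\beta_k^2\|x^k-x^{k-1}\|^2-\tfrac{L}{2}\|x^{k+1}-x^k\|^2$, exactly as in the passage from \eqref{nc} to \eqref{fe}, and (iii) follows by the same rearrangement.

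Part (iv) is a bookkeeping check matching the three $\beta_k$-ranges in Step 1 and Step 3 of the FIHT algorithm against the definition \eqref{zeta} of $\zeta_k$, split over $\mathcal{K}$, $\mathcal{N}_1$ and $\mathcal{N}_2$ precisely as in Lemma \ref{lem:hnin}(i). If $I(x^k)\ne I(x^{k+1})$, then neither (3a) nor (3b-1) produced the iterate, so (3b-2) was used and $\beta_k^2\le\tfrac{L-L_f}{8L-4L_f}$, which is equivalent to $\tfrac{2L-L_f}{2}\beta_k^2\le\tfrac{L-L_f}{8}=\zeta_k$. If $I(x^{k-1})=I(x^k)=I(x^{k+1})$ (the $\mathcal{N}_1$ case), then $\zeta_k=\tfrac{L}{4}(1+\beta_k^2)$ and $\beta_k<1$ give $\tfrac{L}{2}\beta_k^2\le\tfrac{L}{4}(1+\beta_k^2)$; in the remaining case $I(x^{k-1})\ne I(x^k)=I(x^{k+1})$ (the $\mathcal{N}_2$ case) the iterate came from (3b-1), so $\beta_k^2\le\tfrac{L-L_f}{4L}$ forces $\tfrac{L}{2}\beta_k^2\le\tfrac{L-L_f}{8}=\zeta_k$.

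Finally, for (v) I would feed (iv) into (ii) and (iii) and distinguish the same three situations for $\zeta_{k+1}$ as in Lemma \ref{lem:hnin}(ii): when $k\in\mathcal{K}$ I apply (ii), where $\zeta_{k+1}=\tfrac{L-L_f}{8}$ makes the leading bracket equal to $-\tfrac{L-L_f}{8}<0$; when $k\notin\mathcal{K}$ I apply (iii), the leading bracket being $\tfrac{L}{4}(\beta_{k+1}^2-1)\le0$ if $I(x^{k+1})=I(x^{k+2})$ and $-\tfrac{3L+L_f}{8}<0$ if $I(x^{k+1})\ne I(x^{k+2})$. In every case the trailing bracket is nonpositive by (iv), so $\{W(x^k,\zeta_k)\}$ is nonincreasing. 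Since $f$ is level bounded on $\mathcal{X}$ and $x^k\in\mathcal{X}$ with $\lambda\|x\|_0\ge0$, the values $F(x^k)$ are bounded below, hence so is $W(x^k,\zeta_k)\ge F(x^k)$, and a nonincreasing sequence bounded below converges to some $W_\infty<\infty$. I expect the only delicate point to be the case bookkeeping in (iv)--(v): one must be scrupulous that each support-set event selects the correct algorithmic branch, and therefore the correct $\beta_k$-bound and the correct branch of \eqref{zeta}; everything else is a term-by-term transcription of the arguments in Section \ref{sec:2} with the smoothing parameter removed.
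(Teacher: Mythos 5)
Your proposal is correct and is essentially the paper's own argument: the paper explicitly omits the proof of this lemma, saying it follows "by a similar analysis to sequence $H(x^k,\mu_k,\tau_k)$ in Section \ref{sec:2}," and your proof is precisely that transcription under the dictionary $\tilde f(\cdot,\mu_k)\mapsto f$, $L_{\tilde f}\mu_k^{-1}\mapsto L_f$, $L\mu_k^{-1}\mapsto L$, $\kappa\mu_k\mapsto 0$, $\tau_k\mapsto\zeta_k$. Items (ii)--(iii) mirror the proof of Lemma \ref{lem:hk}, and your branch bookkeeping in (iv)--(v) (including the observation that a support change forces branch (3b-2), and that the (3b-2) bound is smaller than the (3b-1) bound so the $\mathcal{N}_2$ estimate holds in every case) matches the proof of Lemma \ref{lem:hnin} case by case.
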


Based on the above results, the next lemma shows that the iterate sequence $\{x^k\}$ generated by the FIHT algorithm is bounded and $I(x^k)$ only changes finite times.
\begin{lemma}\label{lem:l0norm}
	The following statements hold:
	\begin{enumerate}[{\rm (i)}]
		\item there exists a positive constant $R$ such that $\|x^k\|\leq R$, $\forall k\in \mathbb{N}$;
		
		\item the $\ell_0$ norm of sequence $\{x^k\}$ changes only finitely often.
	\end{enumerate}
\end{lemma}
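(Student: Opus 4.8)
The plan is to mirror the two-part argument already carried out for the SFIHT algorithm in Lemmas \ref{lem:hcon}-(i) and \ref{lem:ixk}-(i), now driving everything through the energy sequence $W(x^k,\zeta_k)$ and its properties in Lemma \ref{lem:wk}, together with the lower bound property of Lemma \ref{lem:lowb}. For part (i) I would exploit that $\zeta_k>0$ and that $\{W(x^k,\zeta_k)\}$ is nonincreasing. Since $F(x^k)\leq F(x^k)+\zeta_k\|x^k-x^{k-1}\|^2=W(x^k,\zeta_k)$ and, by Lemma \ref{lem:wk}-(v), $W(x^k,\zeta_k)\leq W(x^1,\zeta_1)=F(x^1)$ (recall $x^1=x^0$, so the extrapolation term vanishes at $k=1$), every iterate satisfies $F(x^k)\leq F(x^1)$. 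Because $x^k\in\mathcal{X}$ by Lemma \ref{lem:wk}-(i) and $F$ is level bounded on $\mathcal{X}$, the whole sequence lies in the bounded set $\{x\in\mathcal{X}:F(x)\leq F(x^1)\}$, yielding the uniform bound $R$.

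For part (ii) I would argue by contradiction, setting $\mathcal{K}:=\{k:I(x^k)\neq I(x^{k+1})\}$ and assuming $\mathcal{K}$ is infinite. The key step is to turn Lemma \ref{lem:wk}-(ii) into a clean one-sided descent inequality on $\mathcal{K}$. For $k\in\mathcal{K}$ we have $I(x^k)\neq I(x^{k+1})$, so the condition $I(x^k)=I(x^{k+1})=I(x^{k+2})$ defining the first branch of \eqref{zeta} fails for index $k+1$, forcing $\zeta_{k+1}=\frac{L-L_f}{8}$; hence the coefficient of $\|x^{k+1}-x^k\|^2$ in Lemma \ref{lem:wk}-(ii) equals $\zeta_{k+1}-\frac{L-L_f}{4}=-\frac{L-L_f}{8}<0$. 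At the same time, Lemma \ref{lem:wk}-(iv) gives $\frac{2L-L_f}{2}\beta_k^2\leq\zeta_k$, so the coefficient of $\|x^k-x^{k-1}\|^2$ is nonpositive. Together these produce $W(x^{k+1},\zeta_{k+1})-W(x^k,\zeta_k)\leq-\frac{L-L_f}{8}\|x^{k+1}-x^k\|^2$ for every $k\in\mathcal{K}$, the FIHT analogue of \eqref{k1}.

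To close the contradiction I would telescope. By Lemma \ref{lem:wk}-(v) the total decrease $\sum_{k\geq1}[W(x^k,\zeta_k)-W(x^{k+1},\zeta_{k+1})]=W(x^1,\zeta_1)-W_\infty$ is finite; discarding the nonnegative contributions off $\mathcal{K}$ and invoking the lower bound $\|x^{k+1}-x^k\|\geq\delta$ for $k\in\mathcal{K}$ from Lemma \ref{lem:lowb}-(ii) gives $\frac{L-L_f}{8}\delta^2\,|\mathcal{K}|\leq\sum_{k\in\mathcal{K}}\frac{L-L_f}{8}\|x^{k+1}-x^k\|^2\leq W(x^1,\zeta_1)-W_\infty<\infty$. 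If $\mathcal{K}$ were infinite the left side would diverge, a contradiction; hence $\mathcal{K}$ is finite, so $I(x^k)$ — and therefore $\|x^k\|_0=n-|I(x^k)|$ — changes only finitely often.

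The bookkeeping is routine and the lower bound doing the heavy lifting is already supplied by Lemma \ref{lem:lowb}; the single point demanding care is the bracket-sign argument for $k\in\mathcal{K}$, namely correctly reading off $\zeta_{k+1}=\frac{L-L_f}{8}$ from \eqref{zeta} and pairing it with Lemma \ref{lem:wk}-(iv) to kill the $\|x^k-x^{k-1}\|^2$ term. Everything else is a direct transcription of the SFIHT proofs, with $\mu_k^{-1}$ replaced by $1$ and the estimate $\nu\mu_k$ replaced by the constant lower bound $\delta^2$.
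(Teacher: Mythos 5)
Your proposal is correct and follows essentially the same route as the paper's own proof: part (i) via $F(x^k)\leq W(x^k,\zeta_k)\leq W(x^1,\zeta_1)=F(x^1)$ plus level boundedness, and part (ii) by contradiction, combining the descent inequality $W(x^{k+1},\zeta_{k+1})-W(x^k,\zeta_k)\leq -\frac{L-L_f}{8}\|x^{k+1}-x^k\|^2$ for $k\in\mathcal{K}$ (from Lemma \ref{lem:wk}-(ii),(iv) and $\zeta_{k+1}=\frac{L-L_f}{8}$) with the telescoped bound $W(x^1,\zeta_1)-W_\infty<\infty$ and the lower bound $\|x^{k+1}-x^k\|\geq\delta$ of Lemma \ref{lem:lowb}-(ii). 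The only differences are expository: you spell out the bracket-sign bookkeeping and the vanishing extrapolation term at $k=1$, which the paper leaves implicit.
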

\begin{proof}
$\mbox{(i)}.~$ By virtue of the nonincreasing of sequence $\{W(x^{k},\zeta_{k})\}$, we have
\begin{equation*}
F(x^k)\leq W(x^{k},\zeta_{k})\leq W(x^{1},\zeta_{1})=F(x^1).
\end{equation*}
Combining this and the level boundedness of $f$ on $\mathcal{X}$, it is easy to obtain the boundedness of $\{x^k\}$.

$\mbox{(ii)}.~$ Also denote $\mathcal{K}=\{k: I(x^k)\neq I(x^{k+1})\}$, and we prove the finiteness of set $\mathcal{K}$ by contradiction. Let's assume that $\mathcal{K}$ contains infinite elements.
By (ii) and (iv) of Lemma \ref{lem:wk} and $\zeta_{k+1}=\frac{L-L_f}{8}$ for $k\in\mathcal{K}$, we obtain
\begin{equation*}
	W(x^{k+1},\zeta_{k+1})-W(x^{k},\zeta_{k})\leq -\frac{L-L_f}{8}\|x^{k+1}-x^{k}\|^2,\quad \forall k\in \mathcal{K}.
\end{equation*}
Summing up the above inequality over $k\in \mathcal{K}$ and using Lemma \ref{lem:wk}-(v), we find
\begin{equation}\label{besu}
\begin{split}
\sum_{k\in \mathcal{K}}\frac{L-L_f}{8}\|x^{k+1}-x^{k}\|^2 &\leq \sum_{k\in \mathcal{K}}\left[W(x^{k},\zeta_{k})-W(x^{k+1},\zeta_{k+1})\right]\\
&\leq \sum_{k=1}^{\infty}\left[W(x^{k},\zeta_{k})-W(x^{k+1},\zeta_{k+1})\right] \\
&=W(x^{1},\zeta_{1})-W_{\infty}<\infty.
\end{split}
\end{equation}
In addition, from Lemma \ref{lem:lowb}-(ii), we have
$$
\sum_{k\in \mathcal{K}}\|x^{k+1}-x^{k}\|^2 \geq \sum_{k\in \mathcal{K}}\delta^2 =\infty,
$$
which is inconsistent with \eqref{besu}. Hence, $I(x^k)$ only changes finite times, as claimed.
\end{proof}

From the above lemma, we can easily find that the FIHT algorithm is reduced to the proximal gradient algorithm with extrapolation $\beta_k=\frac{k-1}{k+\alpha-1}$ for solving $\min_{\mathcal{X}_J} f$ after a finite number of iterations, where $J$ is a fixed index set by Lemma \ref{lem:l0norm}. Literature \cite{Attouch2016the} studied the forward-backward method with extrapolation coefficient $\frac{k-1}{k+\alpha-1}$ for solving the sum of a convex function with Lipschitz continuous gradient and a proper closed convex function. In the context of this section, the algorithm can be written as follows:
\begin{equation}\label{foba}
\left\{
\begin{aligned}
& y^k=x^k+\frac{k-1}{k+\alpha-1}(x^k-x^{k-1}), \\
& x^{k+1}= \arg\min_{x\in \Omega}\left\{\langle \nabla f(y^k),x-y^k \rangle + \frac{L}{2} \|x-y^k\|^2\right\},
\end{aligned}\right.
\end{equation}
where $\Omega$ is a nonempty closed convex set of $\mathbb{R}^n$. The convergence results of this algorithm in \cite{Attouch2016the} are as follows.
\begin{lemma}\label{lem:atth}\cite{Attouch2016the}
	Let $\{x^k\}$ be a sequence generated by \eqref{foba}. When $\alpha>3$, the following statements hold:
	\begin{enumerate}[{\rm (i)}]
		\item the iterates $\{x^k\}$ converge to a global minimizer of $\min_{\Omega} f$;
		
		\item $\lim_{k\to\infty}k^2(f(x^k)-\min f)=0$ and $\lim_{k\to\infty}k\|x^{k+1}-x^k\|=0.$
	\end{enumerate}
\end{lemma}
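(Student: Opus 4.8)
The plan is to read \eqref{foba} as an inertial forward-backward (FISTA-type) iteration for the composite function $\Phi:=f+\iota_\Omega$, where $\iota_\Omega$ is the indicator of $\Omega$, and to run the discrete Lyapunov analysis modelled on the second-order ODE $\ddot x+\tfrac{\alpha}{t}\dot x+\nabla f(x)=0$ of Su, Boyd and Cand\`es. Put $t_k:=\tfrac{k+\alpha-1}{\alpha-1}$, so that $\tfrac{k-1}{k+\alpha-1}=\tfrac{t_{k-1}-1}{t_k}$ and $t_k\sim k/(\alpha-1)$; the momentum in \eqref{foba} is then exactly the FISTA momentum attached to $\{t_k\}$. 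With $\Phi^\star:=\min_\Omega f$ and $\phi_k:=\Phi(x^k)-\Phi^\star$, the aim is to produce an energy $\mathcal{E}_k$ that is nonincreasing, whose boundedness already gives the $O(k^{-2})$ rate, and then to harvest from the strict inequality $\alpha>3$ an extra summability that both sharpens the rate to $o(k^{-2})$ and forces the iterates to converge.

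First I would record the one-step estimate coming from the proximal step: since $x^{k+1}$ minimizes $\langle\nabla f(y^k),x-y^k\rangle+\tfrac{L}{2}\|x-y^k\|^2+\iota_\Omega(x)$ and $\nabla f$ is $L_f$-Lipschitz with $L>L_f$, descent and convexity give, for every $x\in\Omega$,
$$\tfrac{2}{L}\bigl(\Phi(x^{k+1})-\Phi(x)\bigr)\le\|y^k-x\|^2-\|x^{k+1}-x\|^2-\|y^k-x^{k+1}\|^2 .$$
Evaluating this at $x=x^\star\in\arg\min_\Omega f$ and at $x=x^k$ and combining them in the Beck-Teboulle manner (the relation at $x=x^k$ weighted by $t_{k+1}-1$, the relation at $x=x^\star$ weighted by $1$, the sum scaled by $t_{k+1}$), I would introduce
$$\mathcal{E}_k:=\tfrac{2}{L}\,t_k^2\,\phi_k+\|w_k-x^\star\|^2,\qquad w_k:=t_k\,x^k-(t_k-1)\,x^{k-1},$$
and show $\mathcal{E}_{k+1}\le\mathcal{E}_k-c_k$ with $c_k\ge0$. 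The point is that for $t_k=\tfrac{k+\alpha-1}{\alpha-1}$ one has $t_k^2-(t_{k+1}^2-t_{k+1})\ge(\alpha-3)\,k/(\alpha-1)^2$, so $c_k$ contains a term proportional to $[t_k^2-(t_{k+1}^2-t_{k+1})]\phi_{k+1}$, i.e. essentially $(\alpha-3)\,k\,\phi_{k+1}$ (the discrete image of the ODE dissipation $(3-\alpha)\,t(f-\Phi^\star)$). Boundedness of $\{\mathcal{E}_k\}$ yields $\phi_k=O(t_k^{-2})=O(k^{-2})$ and boundedness of $\{x^k\}$, while summing the dissipation gives, precisely because $\alpha>3$, both $\sum_k k\,\phi_k<\infty$ and $\sum_k k\,\|x^{k+1}-x^k\|^2<\infty$.

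These two series deliver statement (ii). Applying the prox inequality at $x=x^k$ alone gives $\phi_{k+1}\le\phi_k+\tfrac{L}{2}\beta_k^2\|x^k-x^{k-1}\|^2$, so $\{\phi_k\}$ is almost nonincreasing with summable increments; together with $\sum_k k\phi_k<\infty$ the standard lemma then forces $k^2\phi_k\to0$, that is $\lim_k k^2(f(x^k)-\Phi^\star)=0$. From $\sum_k k\|x^{k+1}-x^k\|^2<\infty$ a short argument gives $k\|x^{k+1}-x^k\|\to0$. For statement (i) I would invoke Opial's lemma, whose two hypotheses are: every accumulation point of $\{x^k\}$ lies in $\arg\min_\Omega f$, and $\lim_k\|x^k-x^\star\|$ exists for each $x^\star\in\arg\min_\Omega f$. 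The first is easy, since $\{x^k\}\subseteq\Omega$ with $\Omega$ closed and, along any convergent subsequence, $f(x^{k_j})\to\Phi^\star$ by the rate, so lower semicontinuity gives $f(\hat x)=\Phi^\star$ for every cluster point $\hat x$.

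The existence of $\lim_k\|x^k-x^\star\|$ is the main obstacle. Writing $\delta_k:=\tfrac12\|x^k-x^\star\|^2$, the prox inequality at $x=x^\star$ together with $y^k-x^\star=(x^k-x^\star)+\beta_k(x^k-x^{k-1})$ yields the recursion
$$\delta_{k+1}-\delta_k\le\beta_k(\delta_k-\delta_{k-1})+\tfrac{\beta_k+\beta_k^2}{2}\|x^k-x^{k-1}\|^2 .$$
Here the classical quasi-Fej\'er / Alvarez-Attouch argument does \emph{not} apply, because $\beta_k=\tfrac{k-1}{k+\alpha-1}\to1$ rather than staying bounded away from $1$. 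The resolution, and the delicate heart of the proof, is to exploit the \emph{weighted} summability $\sum_k k\|x^{k+1}-x^k\|^2<\infty$ (unavailable in the non-accelerated setting and itself a consequence of $\alpha>3$): feeding it into a careful summation of the recursion controls the positive part of $\{\delta_k-\delta_{k-1}\}$ and shows $\{\delta_k\}$ converges. Once both Opial hypotheses hold, the whole sequence $\{x^k\}$ converges to a point of $\arg\min_\Omega f$, which is strong convergence in $\mathbb{R}^n$. Everything outside this summation step is routine; the genuine difficulty is matching the discretization cross-terms so that the $(\alpha-3)$ factor survives, and then squeezing iterate convergence out of a recursion whose contraction factor degenerates to $1$.
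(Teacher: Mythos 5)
This lemma is not proved in the paper at all: it is quoted verbatim from the cited work of Attouch and Peypouquet, so your proposal can only be compared against that reference. Architecturally, you have reconstructed it faithfully: the reparametrization $t_k=\frac{k+\alpha-1}{\alpha-1}$, the energy $\mathcal{E}_k=\frac{2}{L}t_k^2\phi_k+\|w_k-x^\star\|^2$, the computation $t_k^2-(t_{k+1}^2-t_{k+1})\sim(\alpha-3)k/(\alpha-1)^2$, the two resulting summabilities $\sum_k k\,\phi_k<\infty$ and $\sum_k k\,\|x^{k+1}-x^k\|^2<\infty$, and the Opial argument in which the degeneration $\beta_k\to1$ (which kills the classical quasi-Fej\'er reasoning) is compensated by the weighted summability --- this is exactly the structure of the cited proof, and you correctly located the two places where $\alpha>3$ enters.

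Two steps, however, fail as written. The minor one: your one-step estimate carries the full term $-\|y^k-x^{k+1}\|^2$; what descent, convexity and strong convexity of the subproblem actually yield is the coefficient $\frac{L-L_f}{L}$ on that term (with coefficient $1$ the inequality is false, e.g.\ for quadratic $f$ with $L_f$ close to $L$ and $x=y^k$). The serious one is the passage from summability to the $o$-rates. Neither implication you invoke as ``standard'' is valid for abstract sequences: \emph{(a)} $\phi_{k+1}\le\phi_k+\epsilon_k$ with $\sum_k k\epsilon_k<\infty$ and $\sum_k k\phi_k<\infty$ does \emph{not} force $k^2\phi_k\to0$, and \emph{(b)} $\sum_k k\|x^{k+1}-x^k\|^2<\infty$ does \emph{not} force $k\|x^{k+1}-x^k\|\to0$; both are defeated by spike sequences supported on $k=2^m$ (heights $4^{-m}$ for (a), $2^{-m}$ for (b)), which satisfy every stated hypothesis. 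What rescues the argument --- and what the cited proof actually does --- is monotonicity, and it is already hiding in your own estimate: evaluate the one-step inequality at $x=x^k$ and do \emph{not} discard the term $-\|x^{k+1}-x^k\|^2$ coming from strong convexity of the subproblem. This gives $\phi_{k+1}+\frac{L}{2}\|x^{k+1}-x^k\|^2\le\phi_k+\frac{L}{2}\beta_k^2\|x^k-x^{k-1}\|^2$, so that, since $\beta_k\le1$, the quantity $d_k:=\phi_k+\frac{L}{2}\|x^k-x^{k-1}\|^2$ is nonincreasing. A nonnegative nonincreasing sequence with $\sum_k k\,d_k<\infty$ does satisfy $k^2d_k\to0$ (compare $k^2 d_k$ with the vanishing tail $\sum_{j=\lceil k/2\rceil}^{k}j\,d_j$), and $k^2d_k\to0$ delivers both limits in statement (ii) simultaneously. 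With that replacement, your outline becomes a correct proof along the lines of the reference.
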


Using the above lemma, we obtain the following significant conclusions for the proposed FIHT algorithm.
\begin{theorem}
	When $\alpha>3$, sequence $\{x^k\}$ generated by the FIHT algorithm satisfies
	\begin{enumerate}[{\rm (i)}]
		\item $\lim_{k\to\infty} x^k=x^*$, where $x^*$ is a local minimizer of problem \eqref{prob1} satisfying the lower bound property:
		\begin{equation}\label{xstart}
		|x^*_i|\geq\delta \quad \mbox{or} \quad x^*_i=0,\quad \forall i=1,2,\dots,n,
		\end{equation}
		where $\delta$ is defined as in \eqref{sig}. Moreover, $\lim_{k\to\infty}f(x^k)=f_\infty<\infty$ and $\lim_{k\to\infty}F(x^k)=F_\infty<\infty$ exist;
		
		\item $\lim_{k\to\infty}k\|x^{k+1}-x^k\|=0$;
		
		\item $\lim_{k\to\infty}k^2(f(x^k)-f_\infty)=0$ and $\lim_{k\to\infty}k^2(F(x^k)-F_\infty)=0$.
	\end{enumerate}
\end{theorem}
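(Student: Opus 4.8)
The plan is to exploit the fact, already recorded in the discussion preceding Lemma~\ref{lem:atth}, that once the support of the iterates stabilizes the FIHT scheme is nothing but the forward--backward method \eqref{foba} on a fixed closed convex slice of $\mathcal{X}$, and then to transfer the conclusions of Lemma~\ref{lem:atth} verbatim. First I would use Lemma~\ref{lem:l0norm}-(ii) to fix $K\in\mathbb{N}$ and an index set $J\subseteq\{1,\dots,n\}$ with $I(x^k)=J$ for all $k\geq K$. Then the algorithm executes branch (3a) of Step~3 for every $k\geq K$, so $\beta_k=\frac{k-1}{k+\alpha-1}$ and $x^{k+1}=\bar{x}^{k+1}$; since the support is frozen the penalty $\lambda\|x\|_0$ is the constant $\lambda(n-|J|)$ on the relevant feasible set, and the separability argument parallel to \eqref{sep}--\eqref{xn0} gives, for $k\geq K$,
\begin{equation*}
x^{k+1}=\arg\min_{x\in\mathcal{X}_J}\left\{\langle\nabla f(y^k),x-y^k\rangle+\frac{L}{2}\|x-y^k\|^2\right\},
\end{equation*}
which is exactly \eqref{foba} with $\Omega=\mathcal{X}_J$, a nonempty closed convex set because $0\in\mathcal{X}_J$. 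Thus the tail $\{x^k\}_{k\geq K}$ is generated by \eqref{foba} with coefficients of the form $\frac{k-1}{k+\alpha-1}$, and Lemma~\ref{lem:atth} (whose estimates are asymptotic) applies after discarding the finite initial segment. The minimum $\min_{\mathcal{X}_J}f$ is attained thanks to the level boundedness of $f$ on $\mathcal{X}$.

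Invoking Lemma~\ref{lem:atth} with $\alpha>3$ then delivers three facts at once: $x^k\to x^*$ for some $x^*\in\arg\min_{\mathcal{X}_J}f$; $\lim_{k\to\infty}k\|x^{k+1}-x^k\|=0$, which is item~(ii); and $\lim_{k\to\infty}k^2\bigl(f(x^k)-\min_{\mathcal{X}_J}f\bigr)=0$. It remains to identify the limit $x^*$ as a local minimizer of \eqref{prob1} with the advertised lower bound and to pass to the objective function values.

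To identify $x^*$, I would show $I(x^*)=J$ exactly. The inclusion $J\subseteq I(x^*)$ is immediate from $x_j^k=0$ for $j\in J$, $k\geq K$, together with $x^k\to x^*$. For the reverse inclusion I would invoke the lower bound Lemma~\ref{lem:lowb}-(i): for each $i\notin J$ we have $x_i^k\neq0$, hence $|x_i^k|\geq\delta$ for all $k\geq K$, so $|x_i^*|=\lim_k|x_i^k|\geq\delta>0$ and $i\notin I(x^*)$. Therefore $I(x^*)=J$, which gives the coordinatewise lower bound property \eqref{xstart} (either $x_i^*=0$ for $i\in J$ or $|x_i^*|\geq\delta$ for $i\notin J$); and since $x^*\in\arg\min\{f(x):x\in\mathcal{X}_{I(x^*)}\}$, the characterization \eqref{cocon} shows $x^*$ is a local minimizer of \eqref{prob1}. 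The function-value limits then follow by continuity and the frozen support: $f_\infty:=\lim_k f(x^k)=f(x^*)=\min_{\mathcal{X}_J}f$ exists, whence the rate above reads $\lim_{k\to\infty}k^2(f(x^k)-f_\infty)=0$; and for $k\geq K$ one has $F(x^k)=f(x^k)+\lambda(n-|J|)$, so $F_\infty:=f_\infty+\lambda(n-|J|)$ exists with $F(x^k)-F_\infty=f(x^k)-f_\infty$, giving $\lim_{k\to\infty}k^2(F(x^k)-F_\infty)=0$ and completing item~(iii).

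The main obstacle is the very first step: verifying cleanly that the frozen-support tail of FIHT coincides with the scheme \eqref{foba} on $\mathcal{X}_J$ and that Lemma~\ref{lem:atth}, stated for a process run from its start, legitimately transfers to the tail sequence. The delicate point is that the coefficient $\frac{k-1}{k+\alpha-1}$ evaluated at $k\geq K$ does not reindex to a coefficient of the same form in a shifted counter, so the transfer must rest on the observation that the Lyapunov-type estimates behind Lemma~\ref{lem:atth} hold for all sufficiently large indices and that convergence and the $o(k^{-2})$/$o(k^{-1})$ asymptotics are unaffected by a finite initial segment.
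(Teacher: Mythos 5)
Your proposal is correct and follows essentially the same route as the paper's own proof: freeze the support via Lemma~\ref{lem:l0norm}-(ii), recognize the tail of FIHT as the forward--backward scheme \eqref{foba} on $\mathcal{X}_J$, invoke Lemma~\ref{lem:atth} for convergence and the $o(k^{-2})$ rate, then use Lemma~\ref{lem:lowb}-(i) together with the characterization \eqref{cocon} for the lower bound and local-minimizer claims, and the frozen support to transfer the rate from $f$ to $F$. Your write-up is in fact somewhat more careful than the paper's, which performs the tail-transfer of Lemma~\ref{lem:atth} and the identification $I(x^*)=J$ without comment.
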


\begin{proof}
$\mbox{(i)}.~$ Statement (ii) of Lemma \ref{lem:l0norm} implies that there exist $\bar{K}\in\mathbb{N}$ and $J\subset \{1,2,\ldots,n\}$ such that $I(x^k)=J$, $\forall k \geq \bar{K}$. Therefore, we have
\begin{equation}\label{sek}
x^{k+1}=\arg\min_{x\in \mathcal{X}_J}\left\{f(y^k)+\langle\nabla f(y^k),x-y^k\rangle+\frac{L}{2}\|x-y^k\|^2\right\},\quad\forall k\geq \bar{K}.
\end{equation}
Observing the above relation and Lemma \ref{lem:atth}, we know $\lim_{k\to\infty}x^k=x^*
$, where $x^*$ is a global minimizer of $\min_{\mathcal{X}_J} f$, which is a local minimizer of problem \eqref{prob1}. Combining this with Lemma \ref{lem:lowb}-(i), we deduce that $x^*$ has the lower bound property in \eqref{xstart}. This together with the continuity of $f$, we obtain all the results in this statement.

$\mbox{(ii)}.~$ In view of \eqref{sek} and Lemma \ref{lem:atth}-(ii), we have the estimation in item (ii).

$\mbox{(iii)}.~$ From Lemma \ref{lem:l0norm}-(ii) and \eqref{xstart}, there exists $\bar{K}\in\mathbb{N}$ such that $$\|x^k\|_0=\|x^*\|_0,\quad\forall k\geq \bar{K}.$$
Further, it holds that
\begin{equation}\label{loss}
f(x^k)-f(x^*)=F(x^k)-F(x^*),\quad \forall k\geq\bar{K}.
\end{equation}
By \eqref{sek} and Lemma \ref{lem:atth}-(ii), we obtain the estimation in (iii).
\end{proof}

 If the extrapolation coefficients are chosen below some threshold, the authors in \cite{Wen2017Linear} proved that the iterates and the function value sequence of the proximal gradient algorithm with extrapolation are R-linear convergent when the objective function satisfies the error bound condition. It's worth emphasizing that when loss function $f$ satisfies the error bound condition and $\beta_k$ in the FIHT algorithm satisfies $\sup_k\beta_k<1$, the R-linear convergence of sequence $\{x^k\}$ and objective function values can also be obtained for the FIHT algorithm. For easy reading, the convergence results obtained in this paper are summarized in Table \ref{tab:tab1}.
  \renewcommand\arraystretch{1.72}
 \begin{table}[htbp]
 	{\footnotesize
 		\caption{Summary of convergence of the SFIHT algorithm}  \label{tab:tab1}
 		\begin{center}
 			\begin{tabular}{|c|c|c|c|c|c|} \hline
 			$f$ & $\beta_k$ in Step 1 & $\sigma$ & local minimizer & lower bound &  $\{F(x^k)-F_{\infty}\}$  \\   \hline	
\multirow{5}{*}{nonsmooth} & $\left[0, \sqrt{\frac{\mu_{k}}{\mu_{k-1}}} \right)$ & $\left(0, 2\right)$ & --  &  -- & --  \\ 
 		\cline{2-6}
 		& \eqref{be1} &  $\left[\frac{1}{2}, 1\right]$ & $\checkmark$ &  -- & --\\ 
 				\cline{2-6}
 		&\multirow{3}{*}{\eqref{lossbeta}}  &  (0, 1) & $\checkmark$ & --  & $O(k^{-\sigma})$  \\ 
 			\cline{3-6}
    	&  &  1 & $\checkmark$ & --  &  $O(k^{-1}\ln k)$  \\ 
 	    	\cline{3-6}
 		&  &  (1, 2) & $\checkmark$ & --   &  $O(k^{\sigma-2})$  \\
 		\hline
 smooth & $\frac{k-1}{k+\alpha-1}$ & -- & $\checkmark$ & $\checkmark$ & $o(k^{-2})$  \\  \hline
 			\end{tabular}
 		\end{center}
 	}
 \end{table}

\section{Experimental results}
\label{sec:4}
The aim of this section is to verify the theoretical results and performance of the proposed two algorithms by some numerical experiments. The SFIHT algorithm without extrapolation is called the smoothing iterative hard thresholding (SIHT) algorithm in this paper. Example 4.1 and Example 4.2 are an under-determined linear regression problem and an over-determined censored regression problem, respectively. The purpose of Example 4.1 and Example 4.2 is to illustrate the ability of the SFIHT algorithm for solving the problem, and to compare the good performance of the SFIHT algorithm with respect to the SIHT algorithm. In Example 4.3, we use the FIHT algorithm to solve the under-determined $\ell_0$ regularized least squares problem. At the same time, we compare the performance of FIHT algorithm and IHT algorithm in Example 4.3. For different problems, we choose appropriate equilibrium parameter $\lambda$ to adjust the data fitting and sparsity. The CPU time (in seconds) reported here doesn't include the time of data initialization.

The numerical experiments are performed in Python 3.7.0 on a 64-bit Lenovo PC with an Intel(R) Core(TM) i7-10710U CPU @1.10GHz 1.61GHz and 16GB RAM.

For any given $\epsilon >0$, we call $x^\epsilon$ an $\epsilon$ local minimizer of problem \eqref{prob1}, if it holds
$$\|[\nabla \tilde{f}(x^\epsilon, \mu)]_{I(x^\epsilon)^c}\|_{\infty}\leq\epsilon\quad\mbox{and}\quad\mu\leq\epsilon,$$
where $I(x^\epsilon)^c=\{i: x_i^\epsilon\neq 0\}$. We stop the proposed algorithm if $x^k$ is an $\epsilon$ local minimizer of problem \eqref{prob1} or the number of iteration $k$ exceeds $15000$. Set some fixed parameters $\mu_0=0.7$, $L=2L_{\tilde{f}}$ and $\alpha=4$ throughout the numerical experiments.

\textbf{Example 4.1}\label{exa:4.1}
	We consider the following $\ell_0$ regularized linear regression problem:
	\begin{equation}\label{ell}
	\min_{\mathbf{-1}\leq x \leq \mathbf{1}} F(x):=\|Ax-b\|_1 + \lambda \|x\|_0,
	\end{equation}
	where $A\in \mathbb{R}^{m\times n}$ with $m<n$, $b\in \mathbb{R}^m$. We choose a smoothing function of the $\ell_1$ loss function as below, and it satisfies the conditions in Definition \ref{def:def1},
	\begin{equation}\label{theta_tilde}
	\begin{aligned}
	\tilde{f}(x,\mu)=\sum_{i=1}^m\tilde{\theta}(A_i x-b_i, \mu)
	\end{aligned}
	\quad\mbox{with}\quad
	\tilde{\theta}(z,\mu)=\left\{
	\begin{aligned}
	&|z|  &\mbox{if}~ |z|>\mu, \\
	&\frac{z^2}{2\mu}+\frac{\mu}{2}  &\mbox{if}~ |z|\leq \mu.
	\end{aligned}
	\right.
	\end{equation}	
Three choices of $\beta_k$ in Step 1 and Step 3 of the SFIHT algorithm are set as follows: $\beta_k=\frac{k-1}{k+\alpha-1}\sqrt{\left(1-\frac{1}{2k^{1-\sigma}}\right)\frac{\mu_k}{\mu_{k-1}}}$ (Step 1), $\beta_k=\sqrt{\frac{L-L_{\tilde{f}}}{4L}\frac{\mu_k}{\mu_{k-1}}}$ (Step 3b) and $\beta_k =\sqrt{\frac{L-L_{\tilde{f}}}{8L-4L_{\tilde{f}}}\frac{\mu_k}{\mu_{k-1}}}$ (Step 3b-2).  Denote $s$ the $\ell_0$ norm of true solution $x^\star$, i.e., $\|x^\star\|_0=s$. For positive integers $m$, $n$ and $s$, the data is generated as follows:
\begin{center}
	\tt{\text{$\bar{x}=\mbox{zeros}(n,1);\ \bar{n}=\mbox{randperm}(n);\ \bar{x}(\bar{n}(1:s))=\mbox{randn}(s,1);\ a=0.005;$}}\\
	\tt{\text{$x^{\star}=(\mbox{median}([\bar{x}' ;l' ;u']))';\ A=\mbox{orth}(\mbox{\tt{randn}}(m,n)')';\ b=A\ast x^\star+ a \ast\mbox{randn}(m,1).$}}
\end{center}

Set $\epsilon=10^{-3}$, $L_{\tilde{f}}=\|A^{\mathrm{T}}A\|$, $\sigma=0.95$ and $x^0=\mbox{zeros}(n,1)$.
We randomly generate $A$, $b$ and $x^\star$ with $(m, n) = (300, 1000)$ and $(m, n)=(500, 5000)$. 
Fig. \ref{fig1.a} shows that the support set of sequences $\{x^k\}$ generated by the SFIHT and SIHT algorithm only change finite times and are convergent. Observing Fig. \ref{fig1.b}, we find that compared with the SIHT algorithm, the SFIHT algorithm can find a better solution with fewer iterations. From Fig. \ref{fig:fig4.1}, we see that the convergence rate of the SFIHT algorithm is faster than the SIHT algorithm, and the sparsity of the solution obtained by the SFIHT algorithm is also closer to the true solution $x^\star$. For three different stopping criterions, we record the CPU time and iterations of the two algorithms in Table \ref{tab:tab2}. It's clear that the computational cost of the SFIHT algorithm is much less than that of the SIHT algorithm. The two algorithms find $\epsilon$ local minimizer with $\epsilon=10^{-4}$ with the same iterations, because $\mu_k$ doesn't meet the termination condition when $\|[\nabla \tilde{f}(x^k, \mu_k)]_{I(x^k)^c}\|_{\infty}\leq\epsilon$ holds.

\begin{figure}[htbp]
	\centerline{
		\subfigure[]{\label{fig1.a}\includegraphics[width=0.55\textwidth]{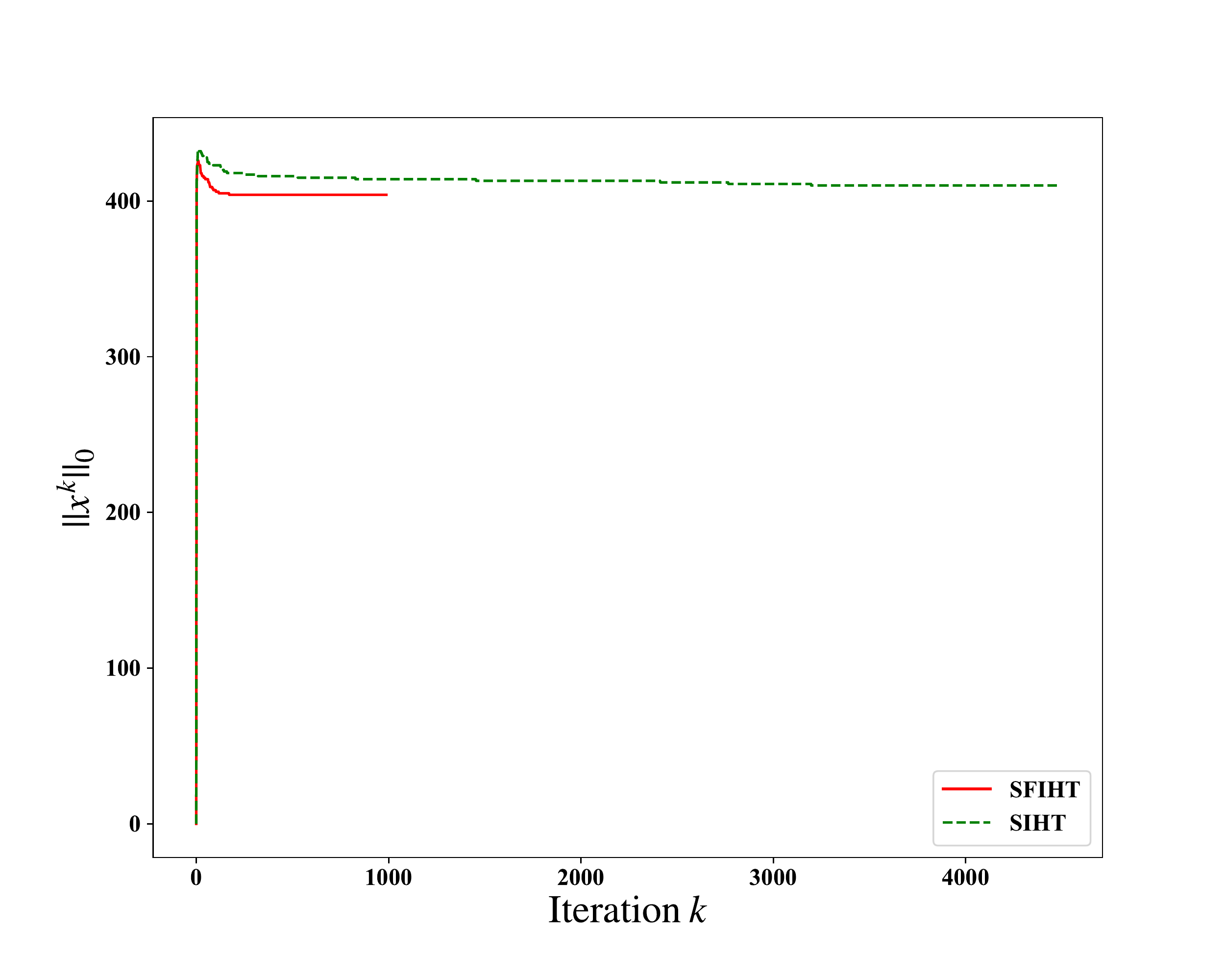}}
	    \subfigure[]{\label{fig1.b}\includegraphics[width=0.55\textwidth]{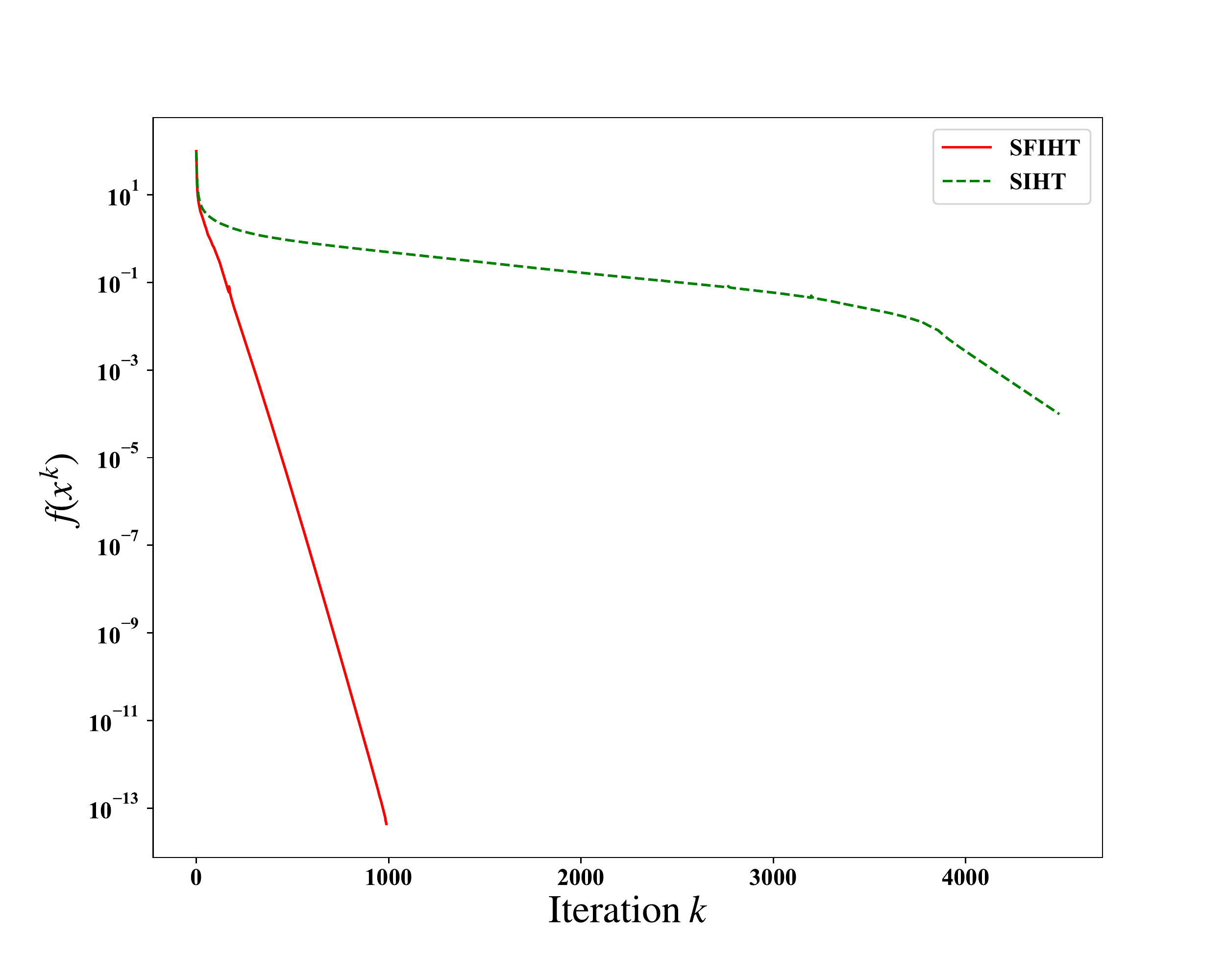}}
	}
	\caption{Convergence of cardinality and loss function values for Example 4.1 with $m=300$, $n=1000$ and $s=400.$}
	\label{fig:fig4.1}       
\end{figure}

\begin{table}[htbp]
	{\footnotesize
		\caption{Computational cost for Example 4.1 with different stopping criterions when $m=300$, $n=1000$ and $s=400.$}  \label{tab:tab2}
		\begin{center}
			\begin{tabular}{|c|c|c|c|c|c|c|c|} \hline
				$\epsilon$ & \multicolumn{2}{|c|}{$10^{-2}$} & \multicolumn{2}{|c|}{$10^{-3}$} & \multicolumn{2}{|c|}{$10^{-4}$}  \\   \hline
			Algorithm  & SFIHT &  SIHT & SFIHT &  SIHT  & SFIHT &  SIHT  \\ \hline
		Time & $\mathbf{0.168}$ & $23.388$ & $\mathbf{1.644}$ & $27.108$ & $\mathbf{192.258}$ & $209.628$ \\  \hline
		Iterations & $\mathbf{217}$& $4150$ & $\mathbf{988}$ & $4487$ & $11155$ & $11155$ \\  \hline
			\end{tabular}
		\end{center}
	}
\end{table}
When $m=500$ and $n=5000$, we generate the problem data with three different sparsity levels, which are $20 \%$, $30 \%$ and $40 \%$. The results drawn in Fig. \ref{fig:fig4.2} show that the SFIHT algorithm substantially outperforms the SIHT algorithm in terms of the solution quality both on the loss function value and the cardinality.

\begin{figure}[htbp]
	\centerline{
		\subfigure[]{\label{fig2.a}\includegraphics[width=0.55\textwidth]{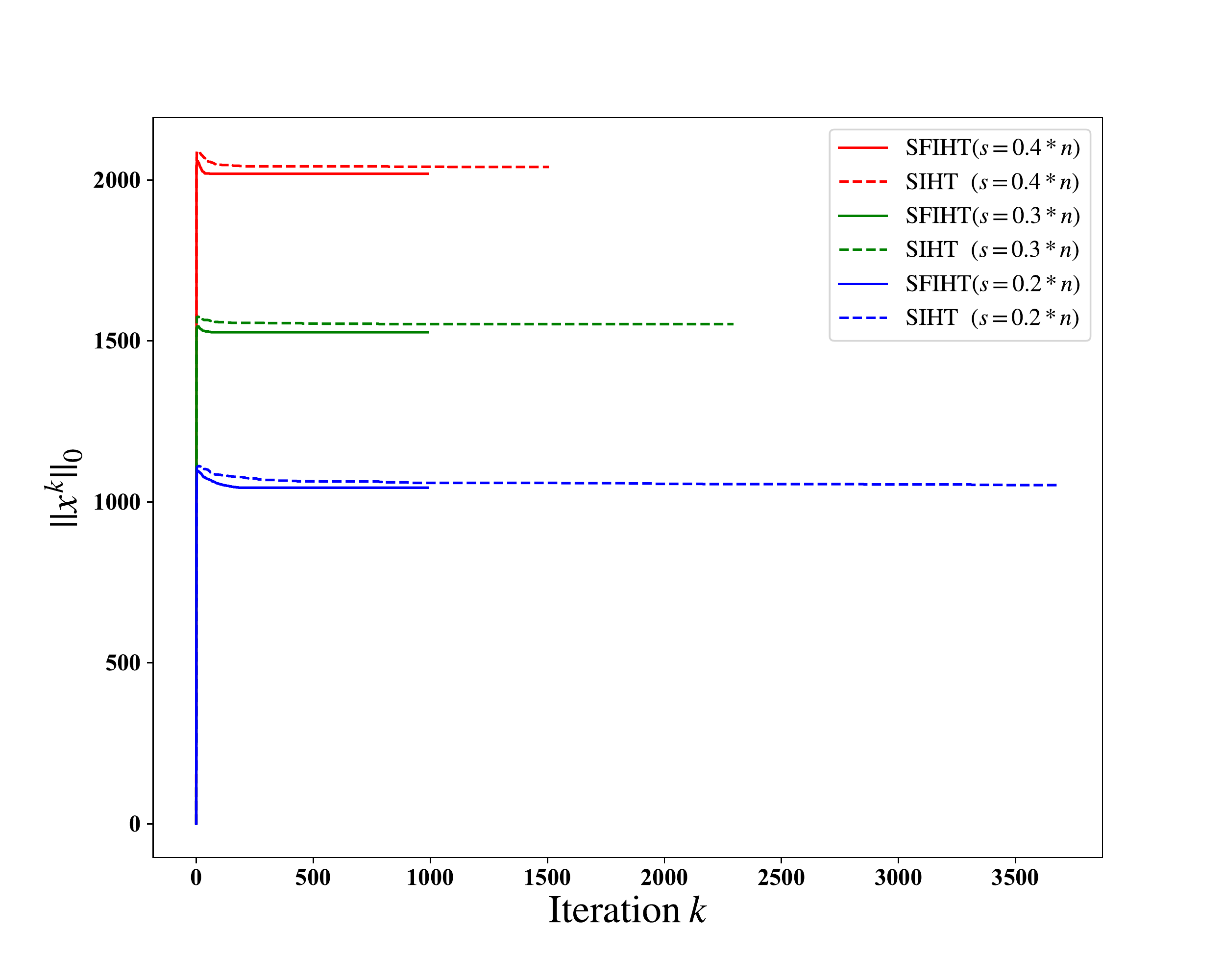}}
		\subfigure[]{\label{fig2.b}\includegraphics[width=0.55\textwidth]{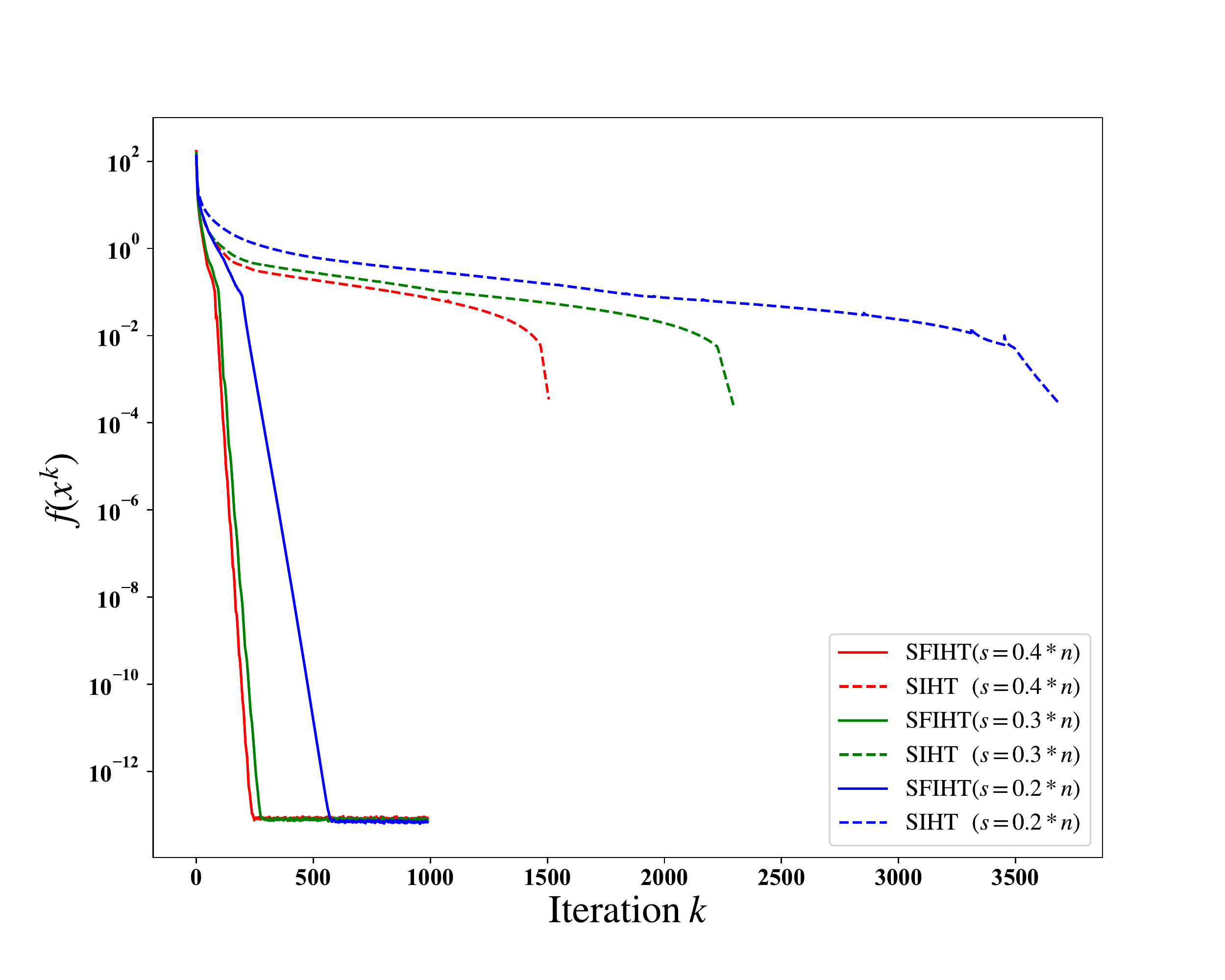}}
	}
	\caption{Convergence of cardinality and loss function values for Example 4.1 with $m=500$, $n=5000$ and different sparsity levels.}
	\label{fig:fig4.2}       
\end{figure}

\textbf{Example 4.2}\label{exa:4.2}
We consider the following $\ell_0$ regularized censored regression problem:
\begin{equation*}\label{max}
	\min_{\mathbf{0}\leq x \leq \mathbf{1}} F(x):=\frac{1}{m}\|\max\{A x, 0\}-b\|_1 + \lambda \|x\|_0,
\end{equation*}
where $A\in \mathbb{R}^{m\times n}$ with $m>n$ and $b\in \mathbb{R}^m$. A smoothing function satisfying Definition \ref{def:def1} of the above loss function is defined by
\begin{equation*}
	\begin{aligned}
		\tilde{f}(x,\mu)=\frac{1}{m}\sum_{i=1}^m\tilde{\theta}(\tilde{\phi}(A_i x, \mu)-b_i, \mu)
	\end{aligned}
	\quad\mbox{with}\quad
	\tilde{\phi}(s,\mu)=\left\{
	\begin{aligned}
		&\max\{s,0\} &\mbox{if}~ |s|>\mu, \\
		&\frac{(s+\mu)^2}{4\mu} &\mbox{if}~ |s|\leq \mu.
	\end{aligned}
	\right.
\end{equation*}
 Set $\epsilon=10^{-2}$,  $L_{\tilde{f}}=\frac{3}{2m}\|A^{\mathrm{T}}A\|$, $\sigma=0.7$ and $x^0=0.1*\mbox{ones}(n,1)$. $\beta_k$ in Step 1 of the SFIHT algorithm is the same as that in \eqref{lossbeta}, and the others are the same as in Example 4.1. We randomly generate the problem data as follows:
\begin{center}
	\tt{\text{$A=\mbox{randn}(m,n);\ \bar{n}=\mbox{randperm}(n);\ x^\star=\mbox{zeros}(n,1);$}}\\
	\tt{	\text{$x^\star(\bar{n}(1:s))=\mbox{unifrnd}(0.1,1,[s,1]);\ b=\max(A*x^\star+0.01*\mbox{randn}(m,1),0).$}}
\end{center}
In this example, we run numerical experiments with $(m, n, s)=(1000, 200, 60)$ and
$(m, n, s)=(2000, 400, 80)$. Results recorded in Fig. \ref{fig:fig4.3} and Fig. \ref{fig:fig4.4} show that the SFIHT algorithm performs much better than the SIHT algorithm in terms of both the $\ell_0$ norms and loss function values. Similarly, the SFIHT algorithm needs much less iterations to get an $\epsilon$ local minimizer of problem \eqref{prob1} than the SIHT algorithm.

\begin{figure}[htbp]
	\centerline{
		\subfigure[]{\label{fig3.a}\includegraphics[width=0.55\textwidth]{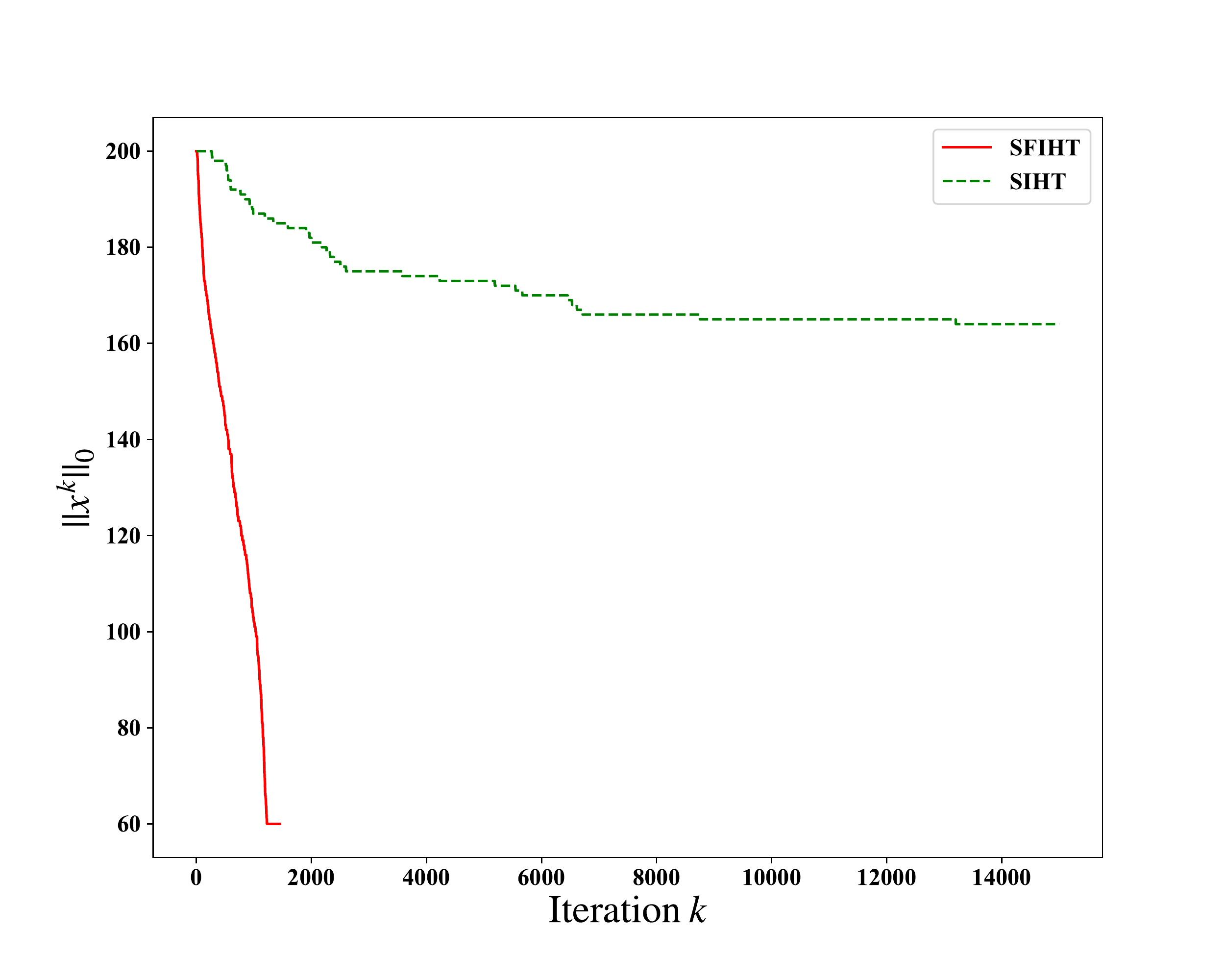}}
		\subfigure[]{\label{fig3.b}\includegraphics[width=0.55\textwidth]{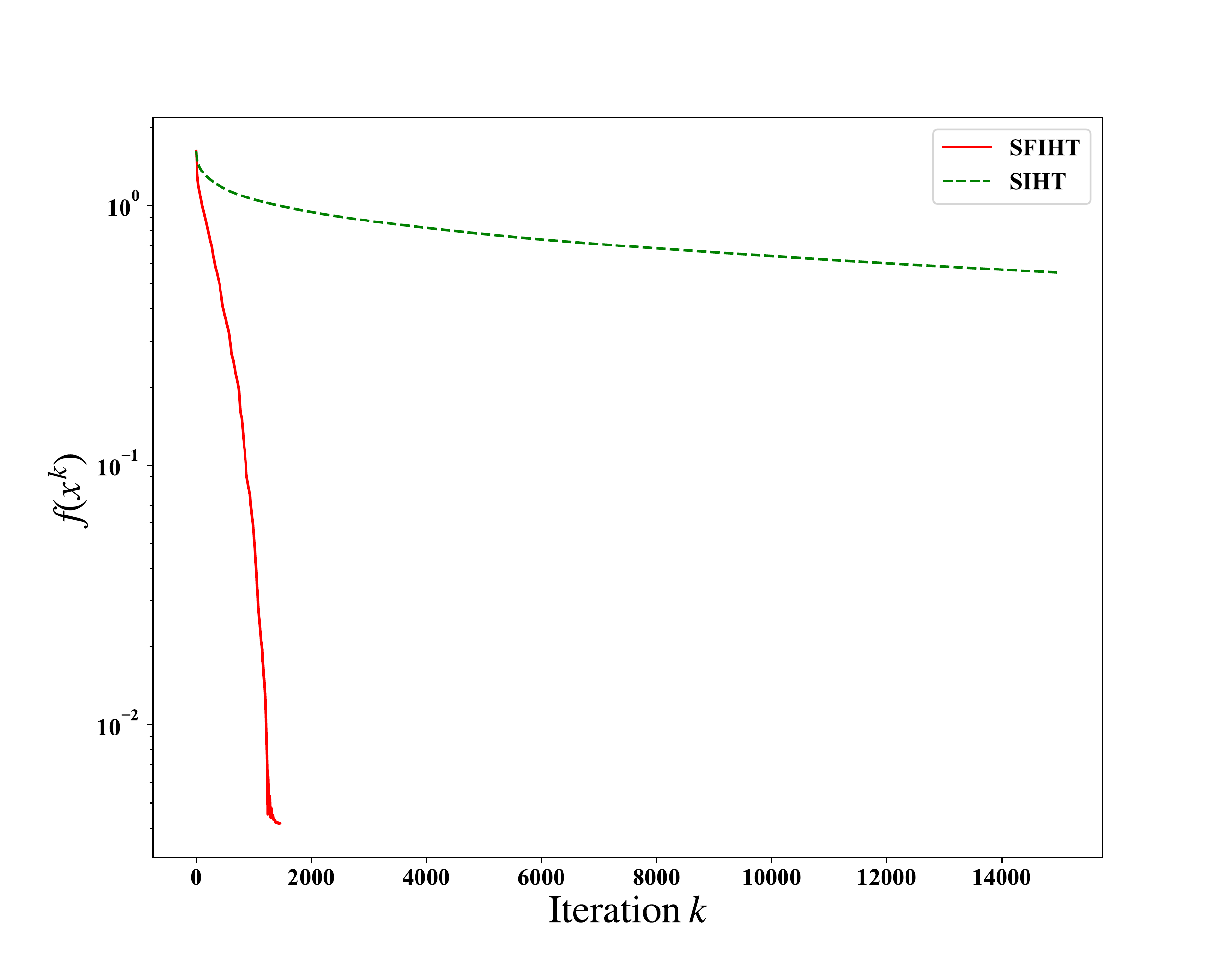}}
	}
	\caption{Convergence of cardinality and loss function values for Example 4.2 with $m=1000$, $n=200$ and $s=60$.}
	\label{fig:fig4.3}       
\end{figure}

\begin{figure}[htbp]
	\centerline{
		\subfigure[]{\label{fig4.a}\includegraphics[width=0.55\textwidth]{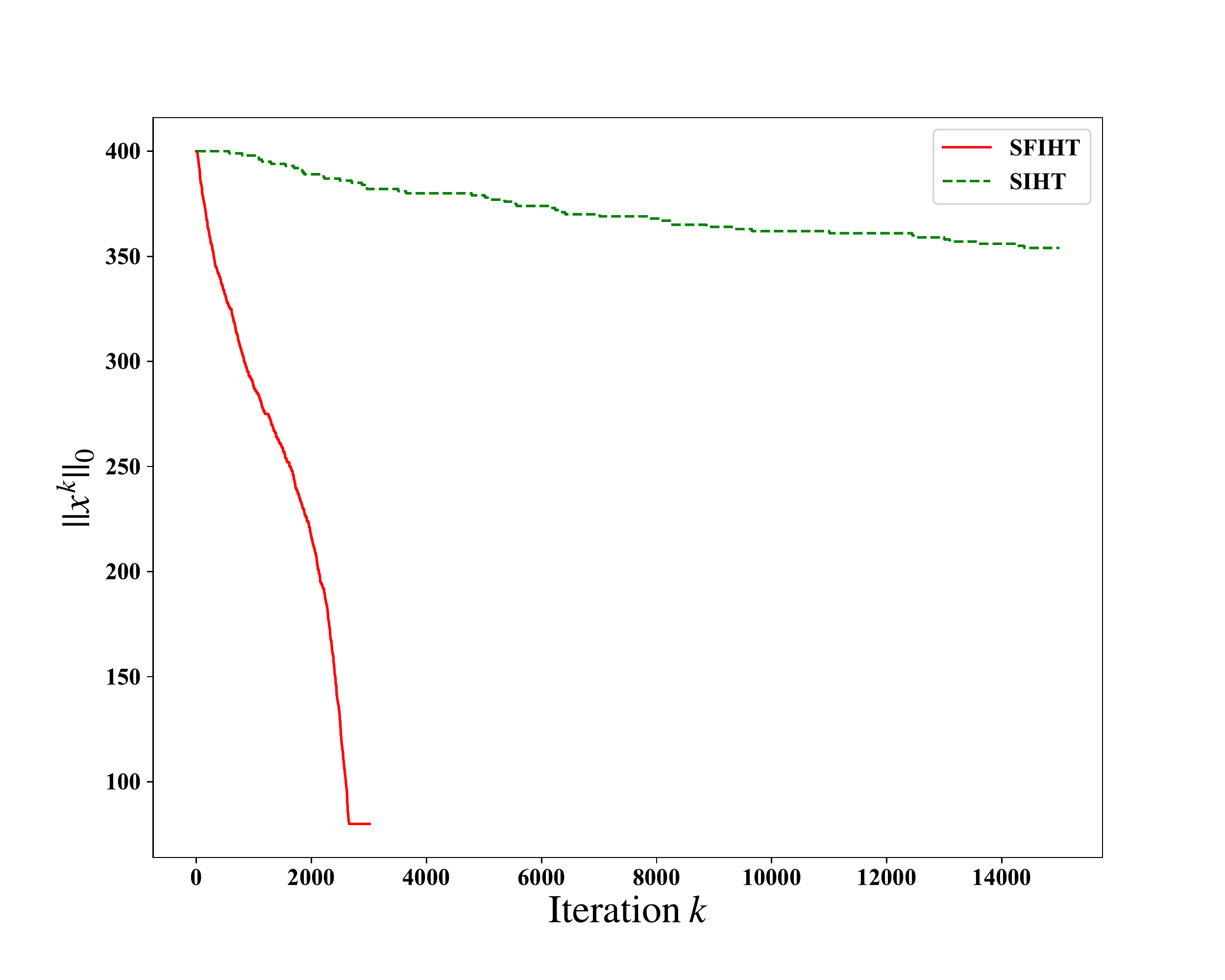}}
		\subfigure[]{\label{fig4.b}\includegraphics[width=0.55\textwidth]{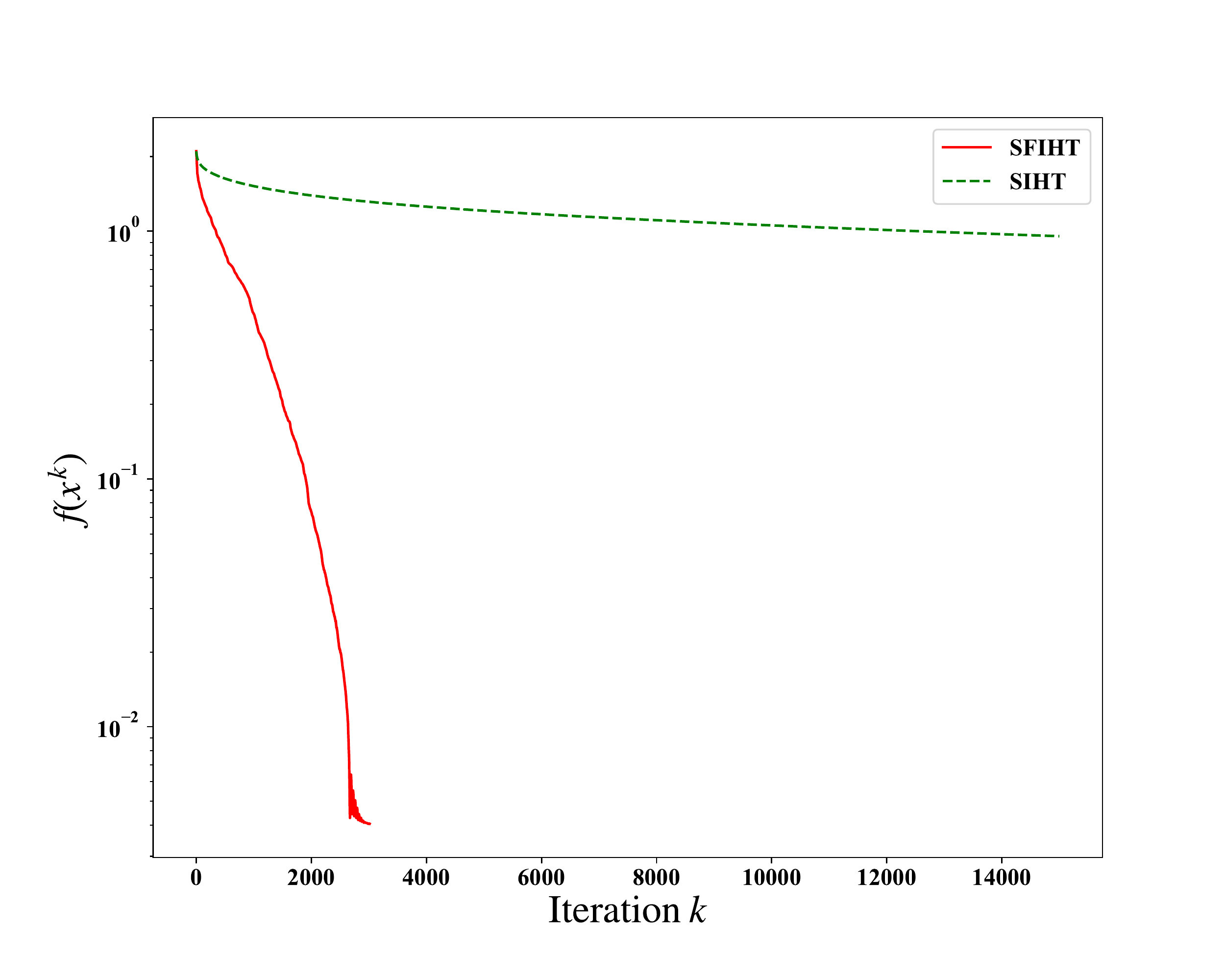}}
	}
	\caption{Convergence of cardinality and loss function values for Example 4.2 with $m=2000$, $n=400$ and $s=80$.}
	\label{fig:fig4.4}       
\end{figure}

\textbf{Example 4.3}\label{exa:4.3}
We consider the following $\ell_0$ regularized least squares problem:
\begin{equation}\label{l2}
\min_{\mathbf{0}\leq x \leq \mathbf{5}} F(x):= \frac{1}{2}\|Ax-b\|^2+\lambda \|x\|_0
\end{equation}
where $A\in \mathbb{R}^{m\times n}$ with $m<n$ and $b\in \mathbb{R}^m$.

We set $\epsilon=10^{-4}$, $L_f=\|A^{\mathrm{T}}A\|$ and $x^0=\mbox{zeros}(n,1)$. We take the extrapolation coefficients
$\beta_k=\sqrt{\frac{k}{k+1}\frac{L-L_f}{4L}}$ (Step 3b) and $\beta_k =\sqrt{\frac{k}{k+1}\frac{L-L_f}{8L-4L_f}}$ (Step 3b-2) in the FIHT algorithm.
 For two cases of $(m, n)$ and three cases of $s$, we randomly generate the problem data as them in Example 4.1. When $(m, n)=(300, 1000)$, for the different choices of $s$, the cardinalities and loss function values versus iteration $k$ are plotted in Fig. \ref{fig:fig4.5}. From this figure, we can see that the total iterations of the FIHT algorithm is much smaller than that of the IHT algorithm, and both the cardinalities and loss function values of the final output iterate obtained by the FIHT algorithm are more accurate. Fig. \ref{fig:fig4.6} illustrates the convergence rate of the FIHT algorithm is also faster than that of the IHT algorithm for solving problem \eqref{l2} when the dimension of the problems is larger. It's also worth emphasizing that the loss function value at iterative point generated by SFIHT algorithm is smaller for all iterations. For different values of $\epsilon$, the CPU time and iterations for obtaining an $\epsilon$ local minimizer by the FIHT algorithm and the IHT algorithm are recorded in Table \ref{tab:tab3}. From the above results, We can observe that the FIHT algorithm performs much better than the IHT algorithm.

\begin{figure}[htbp]
	\centerline{
		\subfigure[]{\label{fig5.a}\includegraphics[width=0.55\textwidth]{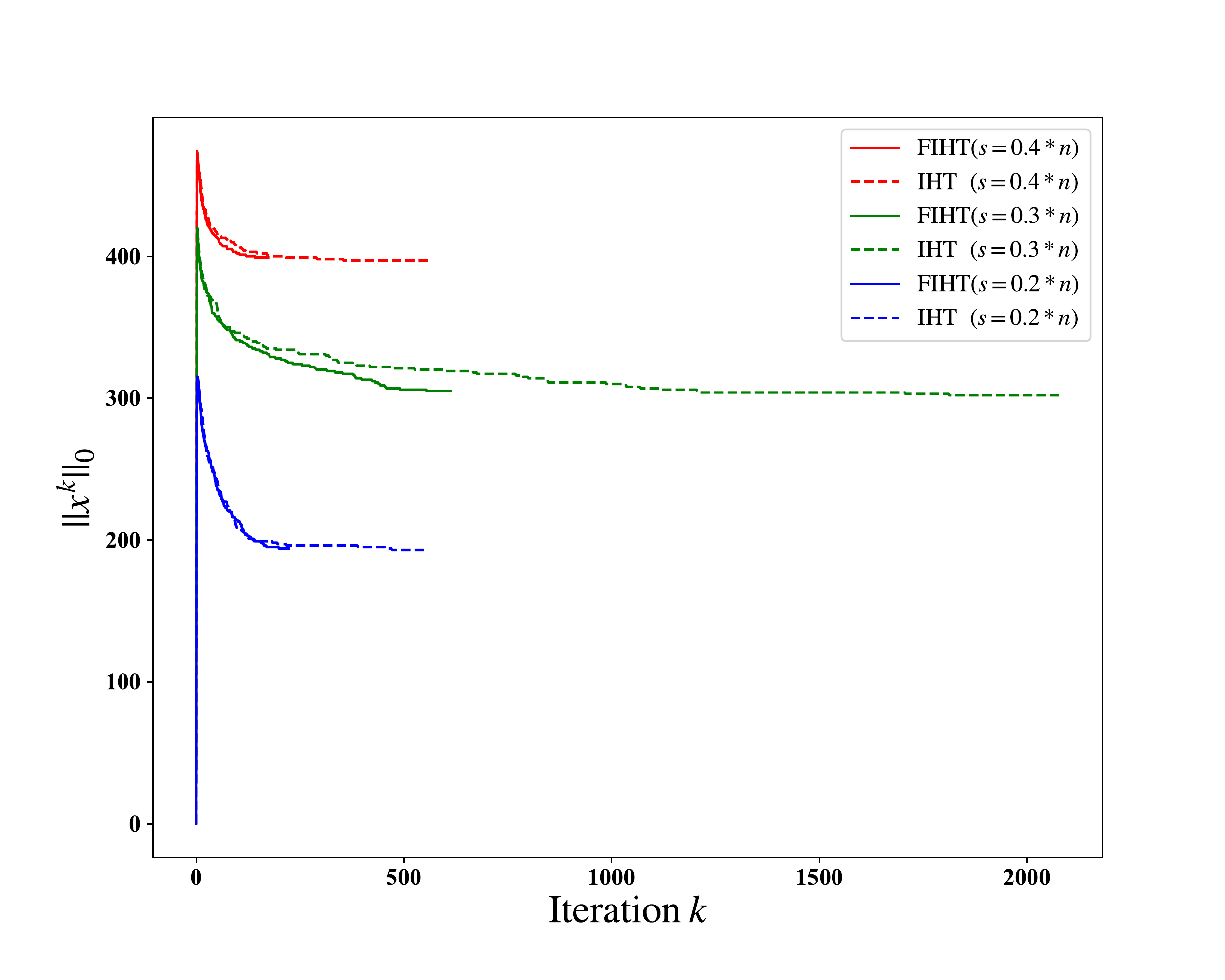}}
		\subfigure[]{\label{fig5.b}\includegraphics[width=0.55\textwidth]{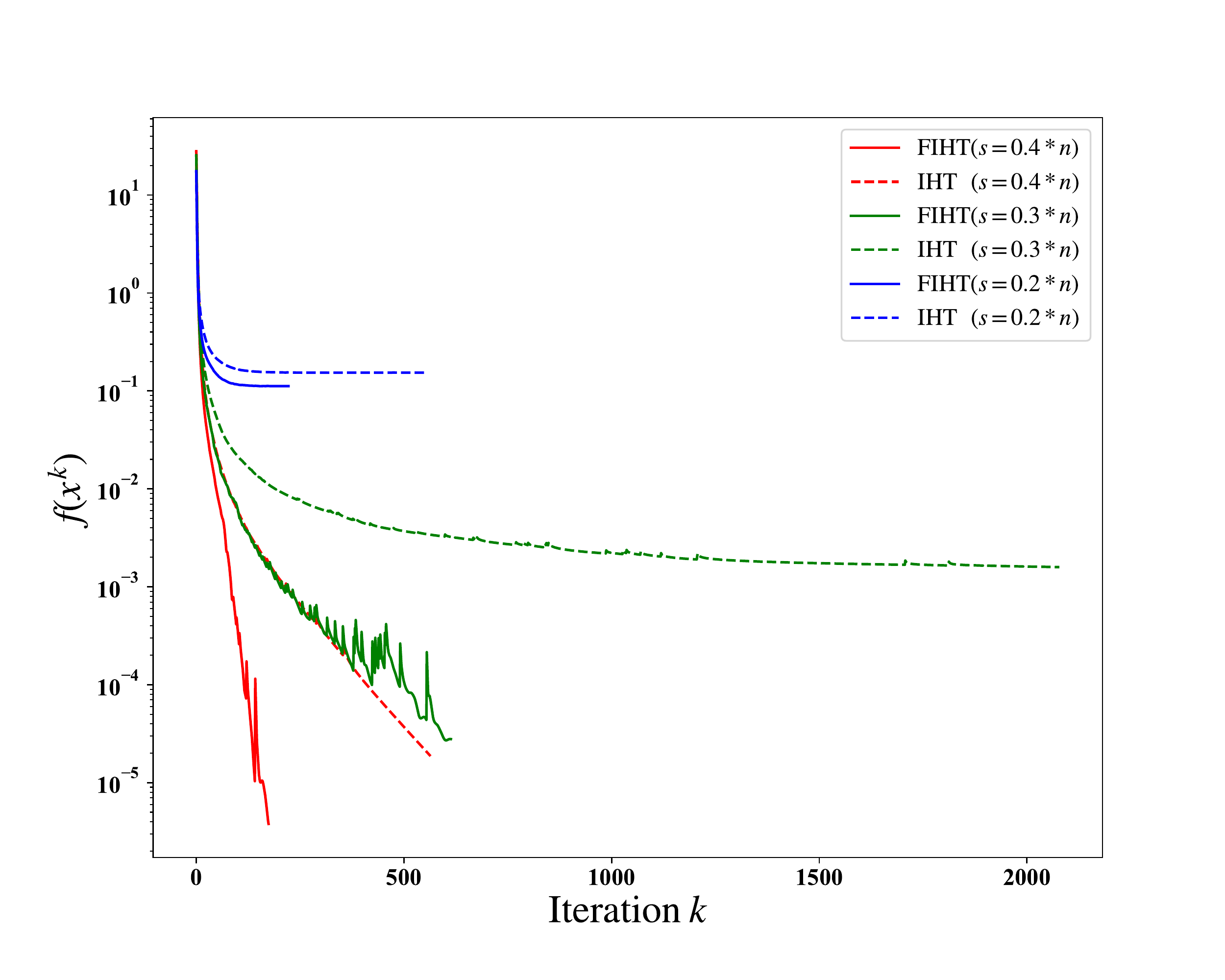}}
	}
	\caption{Convergence of cardinality and loss function values for Example 4.3 with $m=300$, $n=1000$ and different sparsity levels.}
	\label{fig:fig4.5}       
\end{figure}

\begin{figure}[htbp]
	\centerline{
		\subfigure[]{\label{fig6.a}\includegraphics[width=0.55\textwidth]{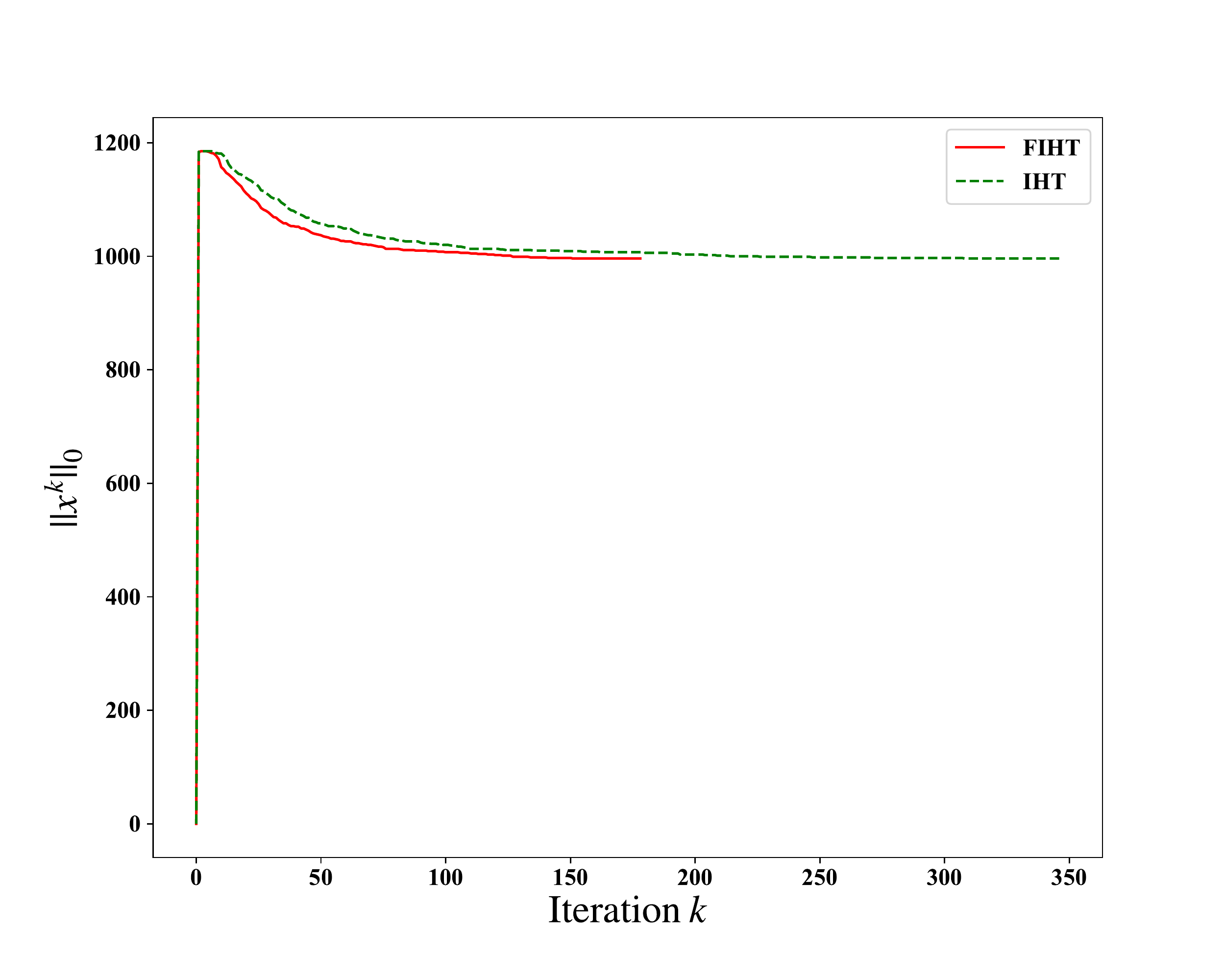}}
		\subfigure[]{\label{fig6.b}\includegraphics[width=0.55\textwidth]{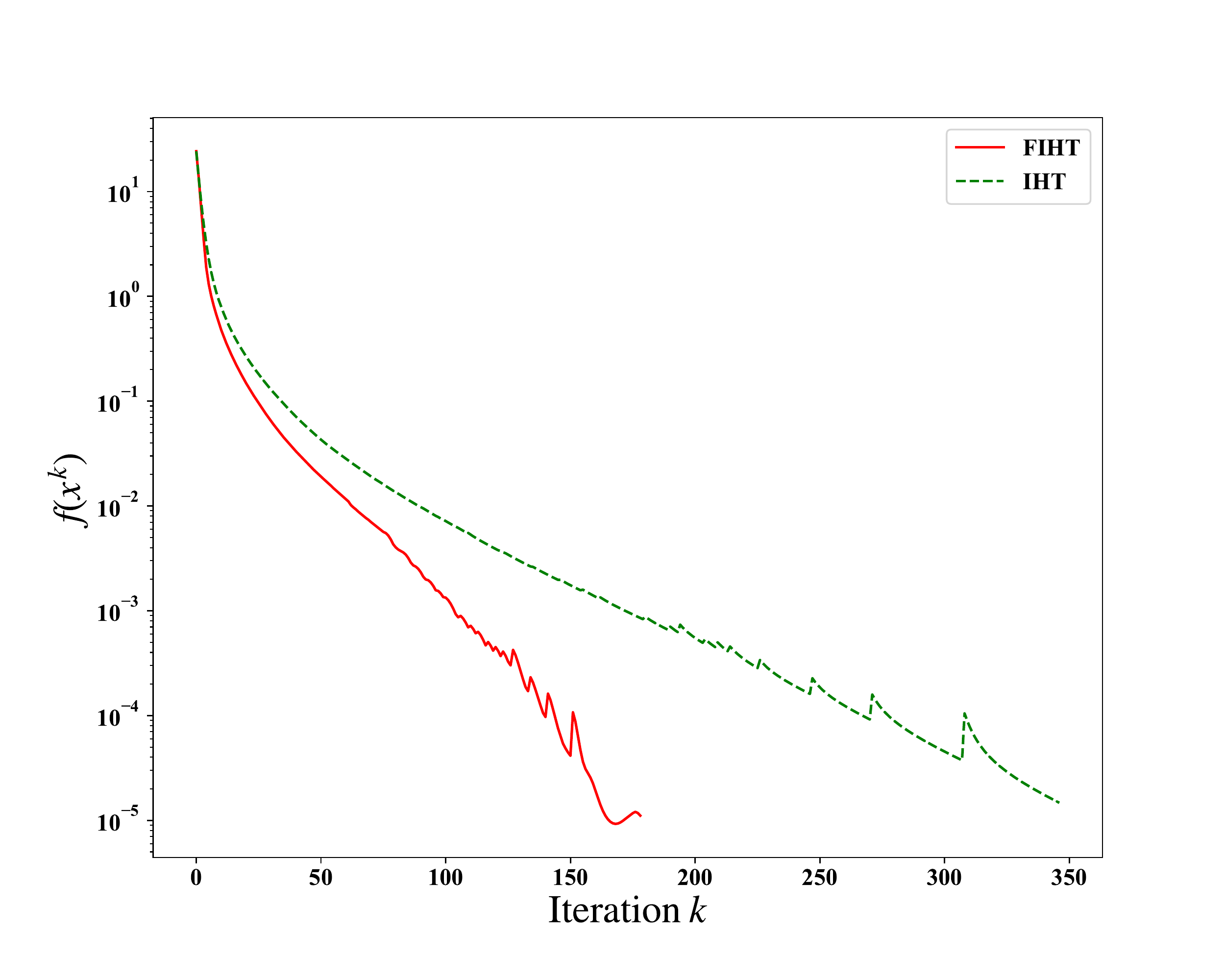}}
	}
	\caption{Convergence of cardinality and loss function values for Example 4.3 with $m=500$, $n=5000$ and $s=1000$.}
	\label{fig:fig4.6}       
\end{figure}

\begin{table}[htbp]
	{\footnotesize
		\caption{Computational cost for Example 4.3 with different stopping criterions when $m=500$, $n=5000$ and $s=1000.$}  \label{tab:tab3}
		\begin{center}
			\begin{tabular}{|c|c|c|c|c|c|c|c|c|} \hline
				$\epsilon$ & \multicolumn{2}{|c|}{$10^{-2}$} & \multicolumn{2}{|c|}{$10^{-3}$} & \multicolumn{2}{|c|}{$10^{-4}$} & \multicolumn{2}{|c|}{$10^{-5}$} \\   \hline
				Algorithm  & FIHT &  IHT & FIHT &  IHT  & FIHT &  IHT & FIHT &  IHT  \\ \hline
				Time & $0.180$ & $\mathbf{0.144}$ & $\mathbf{0.696}$ & $0.738$ & $\mathbf{1.302}$ & $2.322$  &  $\mathbf{2.280}$ & $4.41$ \\  \hline
				Iterations & $\mathbf{26}$& $37$ & $\mathbf{92}$ & $144$ & $\mathbf{178}$ & $346$ & $\mathbf{284}$ & $542$ \\  \hline
			\end{tabular}
		\end{center}
	}
\end{table}

\section{Conclusions}
\label{sec:conclusions}
The main contribution of this paper is to propose an effective and fast algorithm for solving the constrained cardinality penalty problem with a continuous convex loss function, and analyze its convergence properties. We first use a parametric smoothing approximation of the loss function to generate a cardinality penalty problem with smooth loss function. Then, the iterative hard thresholding algorithm with extrapolation is used, in which the smoothing parameter is updated step by step. The only one subproblem in the proposed algorithm has a closed-form solution, and the extrapolation coefficients can be chosen to satisfy $\sup_k \beta_k=1$. After finitely many iterations, the support set of the iterate does not change. Further, we give sufficient conditions to guarantee that any accumulation point of $\{x^k\}$ generated by the proposed algorithm is a local minimizer of the considered problem. For a class of extrapolation coefficients, we obtain not only the convergence of the sequence, but also the convergence rate of $O(\ln k/k)$ on the loss and objective function values. Moreover, we consider in particular the case that the loss function is a Lipschitz continuous convex function. To solve it, we provide an algorithm, which can be viewed as a specific form of the above proposed algorithm. This algorithm owns both sequence convergence on the iterates and convergence rate of $o(k^{-2})$ on the loss and objective function values. Additionally, the limit point not only is a local minimizer of the considered problem but also possesses a desirable lower bound.
\begin{acknowledgements}
This work is funded by the National Science Foundation of China (Nos: 11871178,61773136).
\end{acknowledgements}

\bibliographystyle{spmpsci}      

\bibliography{reference}   


%
%

\end{document}